\newcommand{\rt}{\rightarrow}
\newcommand{\lrt}{\longrightarrow}
\newcommand{\va}{\vartheta}
\newcommand{\st}{\stackrel}
\newcommand{\la}{\lambda}
\newcommand{\La}{\Lambda}
\newcommand{\Ga}{\Gamma}
\newcommand{\Om}{\Omega}
\newcommand{\lan}{\langle}
\newcommand{\ran}{\rangle}
\newcommand{\Up}{\Upsilon}
\newcommand{\Z}{\mathbb{Z} }
\newcommand{\A}{\mathcal{A}}
\newcommand{\SB}{\mathscr{B}}
\newcommand{\SK}{\mathscr{K}}
\newcommand{\M}{\mathcal{M}}
\newcommand{\SP}{\mathcal{P}}
\newcommand{\SR}{\mathscr{R}}
\newcommand{\SX}{\mathscr{X}}
\newcommand{\SY}{\mathscr{Y}}
\newcommand{\CA}{\mathcal{A} }
\newcommand{\CF}{\mathcal{F} }
\newcommand{\CK}{\mathcal{K} }
\newcommand{\CM}{\mathcal{M} }
\newcommand{\CS}{\mathcal{S} }
\newcommand{\CX}{\mathcal{X} }
\newcommand{\CW}{\mathcal{W}}
\newcommand{\CV}{\mathcal{V}}
\newcommand{\CU}{\mathcal{U}}
\newcommand{\CH}{\mathcal{H}}
\newcommand{\ora}{\overrightarrow}
\newcommand{\ola}{\overleftarrow}
\newcommand{\Mod}{{\rm{Mod\mbox{-}}}}
\newcommand{\mmod}{{\rm{{mod\mbox{-}}}}}
\newcommand{\mmodd}{{\rm{mod}}_0\mbox{-}}
\newcommand{\unmod}{\underline{{\rm mod}}\mbox{-}}
\newcommand{\prj}{{\rm{prj}\mbox{-}}}
\newcommand{\inj}{{\rm{inj}\mbox{-}}}
\newcommand{\op}{{\rm{op}}}
\newcommand{\add}{{\rm{add}\mbox{-}}}
\newcommand{\gldim}{{\rm{gl.dim}}}
\newcommand{\domdim}{{\rm{dom.dim}}}
\newcommand{\Coker}{{\rm{Coker}}}
\newcommand{\Ker}{{\rm{Ker}}}
\newcommand{\Hom}{{\rm{Hom}}}
\newcommand{\Ext}{{\rm{Ext}}}
\newcommand{\End}{{\rm{End}}}
\theoremstyle{plain}
\newtheorem{theorem}{Theorem}[section]
\newtheorem{corollary}[theorem]{Corollary}
\newtheorem{lemma}[theorem]{Lemma}
\newtheorem{proposition}[theorem]{Proposition}
\newtheorem{notation}[theorem]{Notation}
\theoremstyle{definition}
\newtheorem{remark}[theorem]{Remark}
\theoremstyle{plain}
\newtheorem{stheorem}{Theorem}[subsection]
\newtheorem{scorollary}[stheorem]{Corollary}
\newtheorem{sproposition}[stheorem]{Proposition}
\theoremstyle{definition}
\newtheorem{sremark}[stheorem]{Remark}
\numberwithin{equation}{section}
\begin{document}

\title[Monomorphism category of $n$-cluster tilting subcategories]{On the Monomorphism Category of $n$-Cluster Tilting Subcategories}

\author[J. Asadollahi, R. Hafezi and S. Sadeghi]{Javad Asadollahi, Rasool Hafezi and Somayeh Sadeghi}

\address{Department of Pure Mathematics, Faculty of Mathematics and Statistics, University of Isfahan, P.O.Box: 81746-73441, Isfahan, Iran}
\email{asadollahi@ipm.ir, asadollahi@sci.ui.ac.ir}
\email{so.sadeghi@sci.ui.ac.ir }

\address{School of Mathematics, Institute for Research in Fundamental Sciences (IPM), P.O.Box: 19395-5746, Tehran, Iran}
\email{hafezi@ipm.ir}

\subjclass[2010]{18E99, 18E10, 18G25, 16D90}

\keywords{Submodule categories, $n$-abelian categories, $n$-cluster tilting subcategories}


\begin{abstract}
Let $\mathcal{M}$ be an $n$-cluster tilting subcategory of ${\rm mod}\mbox{-}\Lambda$, where $\Lambda$  is an artin algebra. Let $\mathcal{S}(\mathcal{M})$ denotes the full subcategory of $\mathcal{S}(\Lambda)$, the submodule category of $\Lambda$, consisting of all monomorphisms in $\mathcal{M}$. We construct two functors from $\mathcal{S}(\mathcal{M})$ to ${\rm mod}\mbox{-}\underline{\mathcal{M}}$, the category of finitely presented (coherent) additive contravariant functors on the stable category of $\mathcal{M}$. We show that these functors are full, dense and objective. So they induce equivalences from the quotient categories of the submodule category of $\mathcal{M}$ modulo their respective kernels.  Moreover, they are related by a syzygy functor on the stable category of ${\rm mod}\mbox{-}\underline{\mathcal{M}}$. These functors can be considered as a higher version of the two functors studied by Ringel and Zhang \cite{RZ} in the case $\Lambda=k[x]/{\langle x^n \rangle}$ and generalized later by Eir\'{i}ksson \cite{E} to self-injective artin algebras. Several applications will be provided.
\end{abstract}

\maketitle


\section{Introduction}
Let $R$ be a commutative artinian ring. Let $\La$ be an artin $R$-algebra of finite representation type. Let $\Ga:=\End_{\La}(E)^{\op}$ be the Auslander algebra of $\La$, where $E$ is an additive generator of $\mmod\La$, the category of finitely presented $\La$ modules. Moreover let $\mmod T_2(\La)$ denote the upper triangular matrix algebra. It is known that $\mmod T_2(\La)$ is equivalent to $\CH(\La)$, the morphism category of $\La$. So the objects of $\mmod T_2(\La)$ can be considered as morphisms in $\mmod\La$. Auslander \cite{Au1} studied a functor from $\mmod T_2(\La)$ to $\mmod \Ga$ by sending an object $f$ of $\mmod T_2(\La)$ to the cokernel of the induced map $\Hom_{\La}(E, f)$. This functor is usually denoted by $\alpha$. It then has been studied further by Auslander and Reiten \cite{AR1} and \cite{AR2}.

Let $n$ be a fixed positive integer. Let $\CS(n)$ be the submodule category of $\mmod\La_n$, where $\La_n=k[x]/{\lan x^n \ran}$ and $k$ is a field. Having the functor $\alpha$ as an `essential tool', Ringel and Zhang \cite{RZ} introduced and studied two functors $F$ and $G$ from $\CS(n)$ to $\mmod \Pi_{n-1}$, the preprojective algebra of type $\mathbb{A}_{n-1}$. They showed that $\mmod \Pi_{n-1}$ is isomorphic to $\underline{\Ga}$, the stable Auslander algebra of $\La$. Hence $F$ and $G$ are functors from $\CS(n)$ to $\underline{\Ga}$. They proved that these functors are full, dense and objective and hence they induce equivalences between the quotients of $\CS(n)$ by the ideals of the kernel objects of $F$ and $G$ and $\underline{\Ga}$. This, in particular, introduces quotients of $\CS(n)$ that are abelian categories with enough projective objects. They also provided a comparison of $F$ and $G$ and showed that they differ only by the syzygy functor on the stable module category $\underline{\Ga}$. Later, Eir\'{i}ksson \cite{E} studied these functors in a more general setting of representation finite self-injective artin algebras, thus he studied two functors from the submodule category of a representation-finite self-injective algebra $\La$ to the module category of the stable Auslander algebra of $\La$
\cite[\S 4]{E}.

Let us be a little bite more explicit. Consider the compositions
\[\xymatrix{\CS(\La) \ar@<0.5ex>[r]^{ \eta \ \ \ \ }  \ar@<-0.5ex>[r]_{\epsilon \ \ \ \ } & \mmod T_2(\La) \ar[r]^{\alpha} & \mmod \Ga \ar[r]^q  & \mmod\underline{\Ga}}\]
where $\eta$ is the inclusion of $\CS(\La)$ in $\mmod T_2(\La)$ and $\epsilon$ maps a morphism $f$ in $\CS(\La)$ to $\Coker(f)$ in $\mmod T_2(\La)$. $\alpha$ is the functor introduced by Auslander, we just recalled above, and for details on $q$ see \cite[\S 3]{E}. The functors $F$ and $G$ are given by $F=q\alpha\eta$ and $G=q\alpha\epsilon$. Note that the functor $F$ also was studied by Li and Zhang \cite{LZ}. Eir\'{i}ksson \cite[Theorem 1]{E} proved that the functor $F$ induces an equivalence of categories $\CS(\La)/{\CU} \lrt \mmod\underline{\Ga}$, where $\CU$ is the additive subcategory of $\CS(\La)$ generated by all objects of the form $M \rt M$ and $M \rt I$, where $M \in \mmod\La$ and $I$ is an injective-projective module. Moreover, $G$ induces an equivalence of categories $\CS(\La)/{\CV} \lrt \mmod\underline{\Ga}$, where $\CV$ is the smallest additive subcategory of $\CS(\La)$ generated by all objects of the form $M \rt M$ and $0 \rt M$, where $M \in \mmod\La$.

Roughly speaking, our aim in this paper is to introduce the above functors to the higher homological algebra setting. This theory is born while Iyama developed a higher version of Auslander's correspondence and Auslander-Reiten theory for artin algebras and related rings, see e.g. \cite{I1}, \cite{I2}. The notions of $n$-cluster tilting modules and $n$-cluster tilting subcategories are fundamental in the Iyama's theory, see for instance \cite{I3}.

Although $n$-cluster tilting subcategories of abelian categories are not abelian, Jasso \cite{J} proved that they have a very nice structure, known as $n$-abelian structure. In this new structure, special sequences of length $n+2$ play the role of short exact sequences in abelian categories. Higher homological algebra is currently a very active area of research. Its importance stems from the many connections and applications cluster tilting theory has in many research areas: Algebraic and Quantum groups (total positivity and canonical bases), Representation Theory (in particular representations of quivers), Geometry (Poisson Geometry, Teichm\"{u}ller spaces, integrable systems), Combinatorics (Stasheff assosiahedra), Algebraic Geometry (Bridgeland's stability conditions, Calabi-Yau algebras, Donaldson-Thomas invariants) and Non-Commutative Geometry (non-commutative crepant resolutions).  For basics of the theory see Subsection \ref{HigherHomAlgebra}, below.

Let $\M$ be an $n$-cluster tilting subcategory of $\mmod\La$. Let $\CS(\M)$ denote the subcategory of $\CS(\La)$ consisting of all monomorphisms in $\M$. We introduce and study two functors $\Phi$ and $\Psi$ from $\CS(\M)$ to $\mmod\underline{\M}$, where $\mmod\underline{\M}$ is the category of additive contravariant finitely presented functors from $\underline{\M}$ to $\CA b$, the category of abelian groups. We show that these two functors, provide equivalences between the quotient categories of $\CS(\M)$ and $\mmod\underline{\M}$. We also compare these two functors and show that they differ by the $n$-th syzygy functor, provided $\M$ is an $n\Z$-cluster tilting subcategory.

The paper is structured as follows. After the introduction, in Section \ref{Pre}, we provide some backgrounds that we need throughout the paper. Section \ref{Phi} is devoted to introduce and study the functor $\Phi$ and in Section \ref{Psi} we will investigate the functor $\Psi$. Section \ref{Comp} contains a comparison of these two functors, when $\La$ is a self-injective artin algebra and  $\M$ is an $n\Z$-cluster tilting subcategory of $\mmod\La$. Section \ref{Sec-Dual} is devoted to a list of duals of the results in Sections \ref{Phi} and \ref{Psi}. Since proofs are similar, we just list the statements without proof. In the last section we provide some applications. In particular, we
present a duality from $\mmod\underline{\M}$ to $\overline{\M}\mbox{-}{\rm mod}$ (Corollary \ref{Corollary-Auslander}), that could be considered as a higher version of the Auslander's result \cite{Au3} showing the existence of a duality between $\mmod\underline{\A}$ and $\overline{\A}\mbox{-}{\rm mod}$, where $\CA$ is an abelian category. We use this duality to prove a higher version of Hilton-Rees Theorem for $n$-cluster tilting subcategories (Theorem \ref{Th-HiltonRees}), and a higher version of Auslander's direct summand conjecture (Theorem \ref{Th-DirectSummand}). Moreover, we apply our results to reprove the existence of $n$-Auslander-Reiten translation $\tau_n=\tau\Omega^{n-1}_{\La}$ in $n$-cluster tilting subcategories  (Theorem \ref{Th-nAuslanderReiten}). Finally, we establish an equivalence between the stable categories of the functors of projective dimension at most one (Proposition \ref{stableequiv}).

\section{Preliminaries}\label{Pre}
Let $\A$ be an abelian category and $\M$ be a full additive subcategory of $\A$. For an object $A \in \A$, let $\A( - , A) \vert_{\M}$ denote the functor $\A( - , A)$ restricted to $\M$.  A right $\M$-approximation of $A$ is a morphism $\pi : M \rt A$ with $M \in \M$ such that $\A( - , M)\vert_{\M} \lrt \A( - , A)\vert_{\M} \lrt 0$ is exact. $\M$ is called a contravariantly finite subcategory of $\A$ if every object of $\A$ admits a right $\M$-approximation. Dually, the notion of left $\M$-approximations and covariantly finite subcategories are defined. $\M$ is called functorially finite subcategory of $\A$, if it is both contravariantly and covariantly finite.

A subcategory $\M$ of $\A$ is called a generating subcategory if for every object $A \in \A$, there exists an epimorphism $M \lrt A$ with $M \in \M$.
Cogenerating subcategories are defined dually. $\M$ is called a generating-cogenerating subcategory of $\A$ if it is both a generating and a cogenerating subcategory.\\

\s{\sc Higher homological algebra.}\label{HigherHomAlgebra}
The concept of $n$-abelian categories is formalized and studied in \cite{J} as a generalisation of the notion of abelian categories. Let us recall the basics. Let $\M$ be an additive category. Let $u^0 : M^0 \lrt M^{1}$  be a morphism in $\M$. An $n$-cokernel of $u^0$  is a sequence
\[M^1 \st{ u^1 }{\lrt } M^2\lrt  \cdots \lrt M^{n}\st{ u^{n} }{ \lrt } M^{n+1}\]
of morphisms in $\M$ such that for every $M \in \M$, the induced sequence
\[0 \lrt \M(M^{n+1}, M) \st{u^n _*}{\lrt} \cdots \st{u^{ 1 }_*}{\lrt} \M(M^1, M) \st{u^0_*}{\lrt} \M(M^{0}, M)\]
of abelian groups is exact. $n$-cokernel of $u^0$ denotes by $(u^1, u^2, \cdots, u^{n})$. The notion of $n$-kernel of a morphism $u^n:M^n \lrt M^{n+1}$ is defined similarly, or rather dually.

A sequence $M^0 \st{u^0 }{\lrt } M^1 \lrt  \cdots \lrt M^n \st{u^n}{\lrt} M^{n+1}$ of objects and morphisms in $\M$ is called $n$-exact \cite[Definitions 2.2, 2.4]{J} if $(u^0, u^1, \cdots, u^{n-1})$ is an $n$-kernel of $u^n$ and $(u^1, u^2, \cdots, u^{n})$ is an $n$-cokernel of $u^0$. An $n$-exact sequence like the above one, will be denoted by
\[0 \lrt M^0 \st{ u^0 }{ \lrt }M^1 \lrt  \cdots \lrt M^n \st{ u^n }{\lrt } M^{n+1} \lrt 0.\]

The additive category $\M$ is called $n$-abelian \cite[Definition 3.1]{J} if it is idempotent complete, each morphism in $\M$ admits an $n$-cokernel and an $n$-kernel and every monomorphism $u^0: M^0 {\lrt } M^1$, respectively every epimorphism $u^n: M^n { \lrt } M^{n+1}$, can be completed to an $n$-exact sequence
\[0 \lrt M^0\st{u^0}\lrt M^1\lrt \cdots \lrt M^n \st{u^n} \lrt M^{n+1} \lrt 0.\]

Let $\A$ be an abelian category. An additive subcategory $\M$ of $\A$ is called an $n$-cluster tilting subcategory \cite[Definition 3.14]{J} if it is a functorially finite and generating-cogenerating subcategory of $\A$ such that $\M^{\perp_n}=\M= {}^{\perp_n}\M$, where
\[\M^{\perp_n}:= \{A \in \A \mid \Ext^i_{\A}(\M, A)=0 \ \text{ for all} \ 0 < i<  n \}, \]
\[{}^{\perp_n}\M:= \{A \in \A \mid \Ext^i_{\A}(A, \M)=0 \ \text{ for all} \ 0 < i <  n \}.\]

It is known that every  $n$-cluster tilting subcategory of an abelian category $\A$ has a structure as an $n$-abelian category \cite[Theorem 3.16]{J}. On the other hand, every small $n$-abelian category $\M$ is equivalent to an $n$-cluster tilting subcategory of an abelian category $\A$, see \cite[Theorem 4.3]{EN} and \cite[Theorem 7.3]{Kv}.

\s{\sc Morphism category.}
Let $\A$ be an abelian category. The morphism category of $\A$, denoted by $\CH(\A)$, is a category whose objects are morphisms in $\A$. Its morphisms are given by commutative diagrams and composition is defined naturally \cite{RS}. Let $f: A \lrt B$ be an object of $\CH(\A)$. Then $A$ (resp. $B$) is called the source (resp. target) of $f$, denoted by $s(f)$ (resp. $t(f)$). It is known that $\CH(\A)$ is an abelian category. A sequence
\[\xymatrix{0 \ar[r] & f' \ar[r]^{\alpha}_\beta & f \ar[r]^{\delta}_{\gamma}  & f'' \ar[r] & 0}\]
of morphisms in $\CH(\A)$ is exact if and only if the induced sequences of sources and targets are exact in $\A$. \\
It has two important full subcategories, i.e. the monomorphism and the epimorphism categories, denoted respectively by $\CS(\A)$ and $\CF(\A)$. As it is expected from their names, the objects of $\CS(\A)$, resp. the objects of $\CF(\A)$, are monomorphisms, resp. epimorphisms, in $\CH(\A)$.
They both are closed under extensions and summands, $\CS(\A)$ is closed under taking kernels and $\CF(\A)$ is closed under taking cokernels.
By \cite[Appendix A]{Ke} a full extension closed subcategory of an abelian category is a Quillen exact category. Hence $\CS(\A)$ and $\CF(\A)$ are both exact categories. The conflations of $\CS(\A)$ (resp. $\CF(\A)$) are exact sequences
\[\xymatrix{0 \ar[r] & f' \ar[r]^{\alpha}_\beta & f \ar[r]^{\delta}_{\gamma}  & f'' \ar[r] & 0}\]
in $\CH(\A)$ with terms in $\CS(\A)$ (resp. $\CF(\A)$).

\s{\sc Functor category.} \label{Functor} Let $\SX$ be a skeletally small additive category. By definition, a (right) $\SX$-module is a contravariant additive functor $F:\SX \rt \CA b$, where $\CA b$ denotes the category of abelian groups. The $\SX$-modules and natural transformations between them form an abelian category denoted by $\Mod\SX$. An $\SX$-module $F$ is called finitely presented if there exists an exact sequence
\[\SX( - ,X) \rt \SX( - ,X') \rt F \rt 0,\]
with $X$ and $X'$ in $\SX$. Finitely presented $\SX$-modules form a full subcategory of $\Mod\SX$, denoted by $\mmod\SX$. It is proved by Auslander
\cite[Chapter III, \S 2]{Au2} that $\mmod\SX$ is an abelian category if and only if $\SX$ admits weak kernels. This happens, for example, when $\SX$ is a contravariantly finite subcategory of an abelian category $\CA$. We let $\mmod\underline{\SX}$ be the category of all functors $F \in \mmod\SX$ that vanishes on projective modules. So $\mmod\underline{\SX}$ can be considered as a subcategory of $\mmod\SX$.

The category of covariant additive functors $F:\SX \rt \CA b$ is denoted by $\SX\mbox{-}{\rm Mod}$, called the category of left $\SX$-modules. Moreover,  $\SX\mbox{-}{\rm mod}$ denotes its subcategory consisting of finitely presented left $\SX$-modules.

\s{\sc Objective functors.} \label{Objective} Here we recall some facts on objective functors. For a good reference see \cite[Appendix]{RZ}. Let $F : \SX \lrt \SY$ be an additive functor between additive categories. $F$ is called an objective functor if any morphism $f$ in $\SX$ with $F(f) = 0$ factors through an object $K$ with $F(K) = 0$. $K$ is then called a kernel object of $F$. We say that the kernel of an objective functor $F$ is generated by $\SK$ if $\add\SK$ is the class of all kernel objects of $F$.

Let $F : \SX \lrt \SY$ be a full, dense and objective functor and the kernel of $F$ is generated by $\SK$. Then $F$ induces an equivalence $\overline{F}: \SX/{\lan \SK \ran} \lrt \SY$. Recall that for a class $\SK$ of objects of the category $\SX$, the ideal of $\SX$ generated by all maps which factor through a direct sum of objects in $\SK$ is denoted by
$\lan \SK \ran$. Following \cite{RZ}, for the ease of notation, we just write $\SX/\SK$ instead of $\SX/{\lan \SK \ran}.$

The composition of objective functors is not necessarily objective, but if in addition, we know that they are both full and dense, then their composition is objective, full and dense.

\begin{notation}
Let $\M$ be an $n$-cluster tilting subcategory of $\mmod\La$. Let $m$ be a natural number.
\begin{itemize}
\item [$-$] We let $\ola{\M}^{\leqslant m}$  denote the subcategory of $\mmod \La$ consisting of all modules $X$ admitting a $\Hom_{\La}(\M, - )$-exact sequence \[0 \lrt M_m \lrt \cdots \lrt M_1 \lrt M_0 \lrt X \lrt 0,\] with $M_i \in \M,$ for all $i \in \{0, 1, \cdots, m\}.$ In this case, we say that $X$ has proper $\M$-dimension at most $m$. Note that since $\M$ contains projectives, the sequence itself is exact.

\item [$-$] Dually, we let $\ora{\M}^{\leqslant m}$ denote the subcategory of $\mmod \La$ consisting of all modules $X$ admitting a $\Hom_{\La}( - , \M)$-exact sequence \[0 \lrt X \lrt M^0 \lrt M^1 \lrt \cdots \lrt M^m \lrt 0,\] with $M_i \in \M,$ for all $i \in \{0, 1, \cdots, m\}.$ In this case, we say that $X$ has coproper $\M$-dimension at most $m$. Since $\M$ contains injectives, the sequence itself is exact.
\item [$-$] Let $\widetilde{\SP}^{\leqslant 1}(\M)$ (respectively, $\widetilde{\SP}^{\leqslant 1}(\M^{\op})$) denote the subcategory of $\mmod\M$ (respectively, $\M\mbox{-}{\rm mod}$) consisting of all finitely presented functors of projective dimension at most one. Note that since $\M$ is an $n$-cluster tilting subcategory of $\mmod\La$, it admits weak kernels (respectively, weak cokernels) and so $\mmod\M$ (respectively, $\M\mbox{-}{\rm mod}$) is an abelian category.
\end{itemize}
\end{notation}

Please note that in the above notations and definitions, if $m \leq n$, then using the fact that $\M$ is an $n$-cluster tilting subcategory of $\mmod\La$, we may deduce easily that any exact sequence
\[0 \lrt M_m \lrt \cdots \lrt M_1 \lrt M_0 \lrt X \lrt 0,\]
with $M_i \in \M,$ for $i \in \{0, 1, \cdots, m\},$ is automatically proper, i.e. it remains exact with respect to the functor $\Hom_{\La}(\M, - )$. Similar comment applies to the objects of $\ora{\M}^{\leqslant m}$.

\section{The functor  $\Phi: \CS(\mathcal{M}) \lrt \mmod{\underline{\mathcal{M}}}$}\label{Phi}
Let $\La$ be an artin algebra and $\M$ be an $n$-cluster tilting subcategory of $\mmod\La$, where $n>1$ is a fixed positive integer. To define $\Phi$ we need some preparations. In particular, we need to define auxiliary functors $\Up$ and $i_{\la}$. We do this in the following two subsections. Throughout the paper, the Hom functor
$\Hom_{\La}( - , M )|_{\M}$ will be denoted by $\M( - , M)$, when $M \in \M$.

\subsection{The functor $\Up: \CS(\M) \lrt \widetilde{\SP}^{\leqslant 1}(\M)$} \label{Upsilon}
In this subsection, we study a restriction to $\CS(\M)$ of the functor $\alpha$ introduced by Auslander \cite{Au1}. This functor studied further in \cite{AR2}. For more recent account see \cite[\S 3]{RZ}.

Consider the subcategory
\[\CS(\M):=\{  M_1 \st{f}\rt M_2  \ | \ f \in \CS(\La) \ {\rm{and}} \ M_1, M_2 \in \M \},\]
of $\CS(\La)$. The assignment
\[(M_1\st{f}\rt M_2) \mapsto \Coker (\M(-, M_1)\st{\M(-, f)}\lrt \M(-, M_2))\]
defines a functor
\[\Up: \CS(\M) \lrt \widetilde{\SP}^{\leqslant 1}(\M).\]
We show that this functor is full, dense and objective. To this end, we consider the following functor. Let $Y_{\M}: \mmod\La \lrt \mmod\M$ be the functor that maps $X \in \mmod\La$ to $\Hom_{\La}( - , X)| _{\M}$.

\begin{sproposition}\label{Propop2.2}
The functor $Y_{\M}$ is full and faithful. In addition, its restriction to $\ola{\M}^{\leqslant 1}$ induces an equivalence of categories
\begin{center}
$Y_{\M}|: \ola{\M}^{\leqslant 1}  \st{\simeq}\lrt \widetilde{\SP}^{\leqslant 1}(\M).$	
\end{center}
\end{sproposition}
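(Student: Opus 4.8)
The plan is to establish the two assertions separately, the fully faithfulness of $Y_{\M}$ on all of $\mmod\La$ first, and then the equivalence onto $\widetilde{\SP}^{\leqslant 1}(\M)$ by analyzing what $Y_{\M}$ does on $\ola{\M}^{\leqslant 1}$.

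For the fully faithfulness of $Y_{\M}: \mmod\La \lrt \mmod\M$, the key observation is that $\M$ is a \emph{generating-cogenerating} subcategory of $\mmod\La$, indeed an $n$-cluster tilting subcategory. In particular $\M$ contains all projectives (actually, as stated in the paper, it contains projectives and injectives). Given $X, X' \in \mmod\La$, I want to show that the natural map $\Hom_\La(X,X') \lrt \Hom_{\mmod\M}(\Hom_\La(-,X)|_{\M}, \Hom_\La(-,X')|_{\M})$ is a bijection. Since $\M$ contains the projectives and $\La \in \M$ (as a regular module, up to summands), evaluating a natural transformation at $\La$ already recovers a map $X \to X'$; one then checks naturality against all $\M(-,M)$ forces compatibility, giving injectivity, and the Yoneda-type argument together with $\M \supseteq \Prj\La$ gives surjectivity. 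Concretely: a natural transformation $\varphi\colon \M(-,X)\to \M(-,X')$ yields for each $M\in\M$ a map $\varphi_M$, and choosing a projective presentation $P_1\to P_0\to X\to 0$ with $P_i\in\M$, the element $\varphi_{P_0}(\text{id realized as } P_0\to X)\in\Hom_\La(P_0,X')$ factors through $X$ by naturality against $P_1\to P_0$, producing the desired preimage $X\to X'$; uniqueness is immediate since $P_0\to X$ is epic. This is the standard argument that the restricted Yoneda functor into $\mmod(\Prj\La)$ is fully faithful, and it only uses $\Prj\La\subseteq\M$.

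For the equivalence onto $\widetilde{\SP}^{\leqslant 1}(\M)$, I would argue as follows. If $X\in\ola{\M}^{\leqslant 1}$, choose $0\to M_1\to M_0\to X\to 0$ with $M_i\in\M$ and (automatically, since $1\le n$) this is $\Hom_\La(\M,-)$-exact; applying $Y_{\M}=\Hom_\La(-,-)|_{\M}$ is left exact, so we get an exact sequence $0\to\M(-,M_1)\to\M(-,M_0)\to\M(-,X)\to 0$ in $\mmod\M$, exhibiting $\M(-,X)=Y_{\M}(X)$ as having projective dimension at most one (the $\M(-,M_i)$ are the representable, hence projective, objects of $\mmod\M$); so $Y_{\M}$ does land in $\widetilde{\SP}^{\leqslant 1}(\M)$. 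Conversely, given $F\in\widetilde{\SP}^{\leqslant 1}(\M)$ with a projective resolution $0\to\M(-,M_1)\st{\M(-,f)}\to\M(-,M_0)\to F\to 0$, Yoneda identifies the map with $\M(-,f)$ for some $f\colon M_1\to M_0$ in $\M$; by fully faithfulness (and since $\M(-,f)$ is monic) $f$ is a monomorphism in $\mmod\La$, set $X:=\Coker f$. Then $0\to M_1\to M_0\to X\to 0$ is exact, $X\in\ola{\M}^{\leqslant 1}$, and comparing the two exact sequences in $\mmod\M$ yields $F\cong\M(-,X)=Y_{\M}(X)$, giving density. Fully faithfulness of the restriction is inherited from that of $Y_{\M}$ on $\mmod\La$ (a fully faithful functor restricts to a fully faithful functor on any full subcategory), so the restriction is an equivalence onto its essential image, which we have just identified with $\widetilde{\SP}^{\leqslant 1}(\M)$.

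The main obstacle I anticipate is the careful bookkeeping in the fully faithfulness step---specifically, making precise the claim that a natural transformation between the restricted representables is determined by, and arises from, a genuine module map, using only $\Prj\La\subseteq\M$ rather than $\M=\mmod\La$; one must be slightly careful that $\M$ need not be all of $\mmod\La$, so a projective presentation of $X$ with terms in $\M$ has to be invoked (it exists since $\Prj\La\subseteq\M$), and then one checks the naturality squares against morphisms of projectives. The density/essential-image identification is then essentially a diagram chase using left-exactness of $Y_{\M}$ and Yoneda, and the observation (highlighted in the paper) that for $m\le n$ any such short exact sequence is automatically $\Hom_\La(\M,-)$-proper, so no extra hypothesis is needed to pass between $\ola{\M}^{\leqslant 1}$ and resolutions in $\mmod\M$.
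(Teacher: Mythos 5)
Your proposal is correct and follows essentially the same route as the paper: full faithfulness of $Y_{\M}$ from $\Prj\La\subseteq\M$, the observation that a proper resolution of $X\in\ola{\M}^{\leqslant 1}$ yields a length-one projective resolution of $\Hom_{\La}(-,X)|_{\M}$ in $\mmod\M$, and density by realizing a given projective presentation as $\M(-,f)$ via Yoneda and comparing with the resolution coming from $\Coker f$. The only cosmetic divergence is in the density step, where the paper completes the monomorphism $f$ to an $n$-exact sequence using the $n$-abelian structure of $\M$, while you pass directly to $\Coker f$ in $\mmod\La$ and invoke the automatic $\Hom_{\La}(\M,-)$-exactness (i.e.\ $\Ext^1_{\La}(\M,\M)=0$, valid since $n>1$); both arguments rest on the same vanishing and give the same comparison of resolutions.
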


\begin{proof}
Since $\M$ contains projective $\La$-modules, it follows that $Y_{\M}$ is full and faithful. We show that its restriction to $\ola{\M}^{\leqslant 1}$ maps to $\widetilde{\SP}^{\leqslant 1}(\M)$ and induces an equivalence. To see this, pick $X \in \ola{\M}^{\leqslant 1}$ and let $0 \lrt M \lrt M' \lrt X \lrt 0$ be an $\M$-proper resolution of $X$. It yields the exact sequence
\[ 0 \lrt \M( - , M) \lrt \M( - , M') \lrt \Hom_{\La}( - , X)|_{\M} \lrt  0.\]
This, in turn, implies that $\Hom_{\La}( - , X)|_{\M} \in \widetilde{\SP}^{\leqslant 1}(\M).$
Hence, to complete the proof, we show that $Y_{\M}|$ is dense. Let $F \in \widetilde{\SP}^{\leqslant 1}(\M)$. There exists a projective resolution
\begin{equation}\label{proj-resolution}
0 \lrt \M( - , M) \st{( - , f)} \lrt \M( - , M') \lrt F \lrt 0
\end{equation}
in $\mmod \M$, where $M, M' \in \M.$ By Yoneda's lemma, we get a monomorphism $M \st{f}{\rt} M'$. Since $\M$ is an $n$-cluster tilting subcategory of $\mmod\La$, it is $n$-abelian and hence this morphism extends to an $n$-exact sequence
\[0 \lrt M \st{f}\lrt M' \lrt N^1 \lrt \cdots \lrt N^n \lrt 0.\]
Set $C=\Coker f.$ The $n$-exactness of the sequence, induces the short exact sequence
\begin{equation}\label{proj-resolution-2}
0 \lrt \M( - , M)\st{( - , f)}\lrt \M( - , M')\lrt \Hom_{\La}( - , C)|_{\M} \lrt 0.
\end{equation}
By comparing sequences \ref{proj-resolution} and \ref{proj-resolution-2}, we get the result.
\end{proof}

\begin{sremark}\label{Exact Structure}
It is known that $\mmod\La$ can be considered as an exact category whose conflations are all $\M$-proper short exact sequences, i.e. short exact sequences of $\La$-modules that are exact under the functor $\Hom_{\La}(M , - )$, for all $M \in \M$. Note that the class of all such proper extensions corresponds to a sub-bifunctor $F_{\M}$ of the bifunctor $\Ext^1_{\La}( - , - )$, see \cite{ASo}. Since $\ola{\M}^{\leqslant 1}$ is an extension closed subcategory of $\mmod \La$ with respect to this exact structure, it also inherits an exact structure.

On the other hand, since $\widetilde{\SP}^{\leqslant 1}(\M)$  is an extension closed subcategory of the abelian category $\mmod\M$, it is an exact category with the induced exact structure: conflations are short exact sequences in $\mmod\M$ whose all terms are in  $\widetilde{\SP}^{\leqslant 1}(\M)$.

 The above proposition, in fact, provides an equivalence of exact categories $\ola{\M}^{\leqslant 1}$ and $\widetilde{\SP}^{\leqslant 1}(\M)$ via the exact functor $Y_{\M}|$.
\end{sremark}

\begin{stheorem}\label{Proposition 2.3}
The functor $\Up:\CS(\M) \lrt \widetilde{\SP}^{\leqslant 1}(\M)$ is full, dense and objective.
\end{stheorem}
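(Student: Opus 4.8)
The plan is to factor $\Up$ through the equivalence $Y_{\M}|\colon \ola{\M}^{\leqslant 1}\st{\simeq}\lrt \widetilde{\SP}^{\leqslant 1}(\M)$ of Proposition \ref{Propop2.2}, and reduce the three properties of $\Up$ to the corresponding properties of a more tractable functor $\CS(\M)\lrt \ola{\M}^{\leqslant 1}$, namely the one sending $(M_1\st{f}\rt M_2)$ to $\Coker f$. Indeed, for $f$ a monomorphism in $\M$ the $n$-exactness package (Jasso) gives the short exact sequence $0\to\M(-,M_1)\to\M(-,M_2)\to \Hom_\La(-,\Coker f)|_\M\to 0$, exactly as in \eqref{proj-resolution-2}; this shows $\Coker f\in\ola{\M}^{\leqslant 1}$ and that $Y_{\M}$ carries $\Coker f$ to $\Up(f)$. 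So $\Up\cong Y_{\M}|\circ C$ where $C(f)=\Coker f$, and since $Y_{\M}|$ is an equivalence, $\Up$ is full/dense/objective iff $C$ is.

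For \emph{density}: given $X\in\ola{\M}^{\leqslant 1}$, pick an $\M$-proper resolution $0\to M\st{f}\to M'\to X\to 0$ with $M,M'\in\M$; then $f$ is a monomorphism in $\M$, hence an object of $\CS(\M)$, and $C(f)=X$. For \emph{fullness}: given objects $f\colon M_1\rightarrowtail M_2$ and $g\colon N_1\rightarrowtail N_2$ in $\CS(\M)$ and a morphism $h\colon\Coker f\to\Coker g$, I would lift $h$ to a morphism of the two short exact sequences $0\to M_1\to M_2\to\Coker f\to 0$ and $0\to N_1\to N_2\to\Coker g\to 0$. The right-hand square can be filled because $N_2\to\Coker g$ is an epimorphism and $M_2\in\M$ is relatively projective with respect to $\M$-proper epimorphisms (this is where the $\M$-properness of the second sequence, guaranteed since $\M$ is $n$-cluster tilting, is used); then the left-hand square is filled by restriction to kernels. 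This produces a morphism $(M_1\to M_2)\to(N_1\to N_2)$ in $\CS(\M)$ mapping to $h$ under $C$.

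For \emph{objectivity}: suppose $(a,b)\colon f\to g$ in $\CS(\M)$ (so $a\colon M_1\to N_1$, $b\colon M_2\to N_2$) has $C(a,b)=\bar b=0$, i.e.\ the induced map $\Coker f\to\Coker g$ is zero. Then $b\colon M_2\to N_2$ factors as $M_2\st{p}\to\Coker f\st{0}$… more precisely $b$ composed with $N_2\to\Coker g$ is zero, so $b$ factors through $\ker(N_2\to\Coker g)=\im g$, i.e.\ $b=g\,b'$ for some $b'\colon M_2\to N_1$; a diagram chase then shows $(a,b)$ factors through the object $\mathrm{id}_{M_2}\colon M_2\to M_2$ (or through $M_2\st{f}\to M_2'$-type objects), which lies in the kernel class of $C$ since $\Coker(\mathrm{id}_{M_2})=0$. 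I expect the bookkeeping here to mirror \cite[\S 3]{RZ} closely: the kernel objects of $\Up$ should turn out to be generated by the objects $M\st{=}\to M$ together with $M\st{f}\to M'$ with $\Coker f$ projective, i.e.\ essentially the class corresponding to Eir\'iksson's $\CU$ at the level of $\mmod\M$ rather than $\mmod\underline\M$.

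The main obstacle is fullness: filling the right-hand square requires that the epimorphism $N_2\twoheadrightarrow\Coker g$ be a genuine $\M$-proper epimorphism so that $\Hom_\La(M_2,-)$ applied to it is still surjective; this is exactly the content of the remark following the Notation (an exact sequence of length $\le n$ over $\M$ is automatically $\M$-proper) combined with $n$-abelianity, and one must make sure the lifted pair $(a,b)$ genuinely lands in $\CS(\M)$ (i.e.\ the induced map on kernels is again a monomorphism in $\M$), which follows because $\CS(\A)$ is closed under kernels and $M_1,N_1\in\M$. Once fullness and density are in hand, objectivity is a routine factorization argument, and the theorem follows by transporting along the equivalence $Y_{\M}|$.
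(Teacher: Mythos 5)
Your proposal is correct and follows the paper's own proof essentially step for step: you factor $\Up \cong Y_{\M}|\circ C$ with $C$ the cokernel functor, transport the three properties along the equivalence of Proposition \ref{Propop2.2}, prove fullness by lifting a map of cokernels using the $\M$-properness (relative projectivity of $M_2$) of the short exact sequences, get density from the definition of $\ola{\M}^{\leqslant 1}$, and prove objectivity by factoring $b$ through $N_1$ and hence $(a,b)$ through an identity object (you use $\mathrm{id}_{M_2}$, the paper uses $\mathrm{id}_{N_1}$; both factorizations are valid). Only your closing aside is off: the kernel objects of $\Up$ are exactly the isomorphisms in $\CS(\M)$ (see Corollary \ref{equiv-Phi}), not objects with projective cokernel --- the latter describes the kernel of $\Phi=i_{\la}|\circ\Up$ --- but this does not affect the proof of the stated theorem.
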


\begin{proof}
The functor $\Up$ can be considered as the composition
\[\xymatrix@C=1.4cm{\Up: \CS(\M) \ar[r]^{C} & \ola{\M}^{\leqslant 1}  \ar[r]^{Y_{\M}|} & \widetilde{\SP}^{\leqslant 1}(\M), }\]
where $C$ is the usual cokernel functor.
In view of Proposition \ref{Propop2.2}, $Y_{\M}|$ is full, faithful and dense. Since faithful functors are objective, to prove the result it is enough to show that the functor
\[\xymatrix@C=1.4cm{C: \CS(\M) \ar[r] & \ola{\M}^{\leqslant 1} }\]
is full, dense and objective. We do this. To see that it is full, let $f: M_1 \rt M_2$ and $f': M'_1 \rt M'_2$ be two objects of $\CS(\M)$ and $\alpha: C( f) \lrt C( f')$ be a morphism in $\ola{\M}^{\leqslant 1}$. Consider the diagram
\[\begin{tikzcd}
0 \ar[r] & M_1  \ar[r, "f"] \ar[d, dotted, "{\alpha_1}"] & M_2 \ar[r] \ar[d, dotted, "{\alpha_2}"]  & C(f)  \ar[r] \ar[d, "{\alpha}"] & 0 \\
0 \ar[r] & M'_1 \ar[r, "{f'}"]  & M'_2 \ar[r] & C(f') \ar[r] & 0
\end{tikzcd}\]
Since $\M$ is an $n$-cluster tilting subcategory with $n>1$, rows are $\M$-proper and hence $\alpha$ can be lifted to a morphism
\[\xymatrix{f \ar[r]^{\alpha_1}_{\alpha_2} & f' }\]
in $\CS(\M)$. Hence $C$ is full. By definition, it is dense. To see that it is objective, let $(\alpha_1, \alpha_2)$ be a morphism of $f : M_1 \rt M_2$ to $f' : M'_1 \rt M'_2$ such that $C(\alpha_1, \alpha_2)=0$. Hence $(\alpha_1, \alpha_2)$ in the following diagram
\[\begin{tikzcd}
0 \ar[r] & M_1  \ar[r, "f"] \ar[d, "{\alpha_1}"] & M_2 \ar[r] \ar[d, "{\alpha_2}"]  & C(f)  \ar[r] \ar[d, "{0}"] & 0 \\
0 \ar[r] & M'_1 \ar[r, "{f'}"]  & M'_2 \ar[r] & C(f') \ar[r] & 0
\end{tikzcd}\]
is null-homotopic. Therefore, there exists a morphism $s: M_2 \rt M'_1$ such that $sf=\alpha_1$ and $f's=\alpha_2$. This, in turn, induces the following factorization of the morphism $(\alpha_1, \alpha_2)$
\[\begin{tikzcd}
0 \ar[r] & M_1  \ar[r, "f"] \ar[d, "{\alpha_1}"] & M_2 \ar[d, "{s}"] \\
0 \ar[r] & M'_1 \ar[r,  "{1}"]  \ar[d, "{1}"] & M'_1 \ar[d, "{f'}"] \\
0 \ar[r] & M'_1  \ar[r, "f'"] & M'_2
\end{tikzcd}\]
where the middle object is clearly a kernel object. This completes the proof of the proposition.
\end{proof}

We have the following immediate corollary.

\begin{scorollary}\label{equiv-Phi}
With the above notations, there exists an equivalence of additive categories
\[{\CS(\M)}/{\CK}\simeq \widetilde{\SP}^{\leqslant 1}(\M) \simeq \ola{\M}^{\leqslant 1}, \]
where $\CK$ is the full subcategory of $\CS(\M)$ generated by all isomorphisms in $\CS(\M).$
\end{scorollary}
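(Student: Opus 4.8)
The plan is to read the corollary off directly from Theorem \ref{Proposition 2.3} and Proposition \ref{Propop2.2}, using the general machinery on objective functors recalled in Subsection \ref{Objective}. The one point that requires an argument is the identification of the kernel objects of $\Up$. By definition, an object $g : M_1 \rt M_2$ of $\CS(\M)$ satisfies $\Up(g)=0$ precisely when $\Coker\big(\M(-,M_1)\st{\M(-,g)}\lrt \M(-,M_2)\big)=0$, that is, when $\M(-,g)$ is an epimorphism of $\M$-modules. Since $g$ is a monomorphism in $\mmod\La$, the natural transformation $\M(-,g)$ is automatically a monomorphism, hence in this case an isomorphism; as $M\mapsto \M(-,M)$ is fully faithful (Yoneda) and $\M$ is a full subcategory of $\mmod\La$, it follows that $g$ is an isomorphism. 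Conversely every isomorphism in $\CS(\M)$ is obviously killed by $\Up$. Thus the class of objects $K$ with $\Up(K)=0$ is exactly the class of isomorphisms in $\CS(\M)$; this class is closed under finite direct sums and direct summands, so it coincides with $\add\CK$. Hence the kernel of $\Up$ is generated by $\CK$ in the sense of Subsection \ref{Objective}.

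Next I would simply combine this with Theorem \ref{Proposition 2.3}, which asserts that $\Up$ is full, dense and objective. The general statement recalled in Subsection \ref{Objective} — a full, dense, objective functor whose kernel is generated by $\SK$ induces an equivalence $\SX/{\lan \SK \ran}\lrt \SY$ — then yields an equivalence of additive categories $\overline{\Up}:\CS(\M)/{\lan\CK\ran}\st{\simeq}\lrt \widetilde{\SP}^{\leqslant 1}(\M)$, which we abbreviate as $\CS(\M)/\CK\simeq \widetilde{\SP}^{\leqslant 1}(\M)$ following the notational convention adopted there.

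Finally, the second equivalence $\widetilde{\SP}^{\leqslant 1}(\M)\simeq \ola{\M}^{\leqslant 1}$ is nothing but (the inverse of) the equivalence $Y_{\M}|$ provided by Proposition \ref{Propop2.2}. Composing the two equivalences gives the displayed chain, completing the proof.

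There is essentially no serious obstacle here; the corollary is formal once Theorem \ref{Proposition 2.3} and Proposition \ref{Propop2.2} are in hand. The only step needing a moment of care is the computation of the kernel objects, where one uses both that objects of $\CS(\M)$ are genuine monomorphisms (to get injectivity of $\M(-,g)$) and that $\M$ is a full subcategory containing the projectives (so that Yoneda detects isomorphisms).
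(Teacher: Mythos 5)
Your proposal is correct and follows the same route as the paper: identify the kernel objects of $\Up$ as the isomorphisms in $\CS(\M)$ (which the paper asserts directly from the definition and you spell out via the mono-plus-epi Yoneda argument), then invoke the objective-functor machinery of Subsection \ref{Objective} together with Theorem \ref{Proposition 2.3} for the first equivalence and Proposition \ref{Propop2.2} for the second. The only cosmetic remark is that detecting that $g$ is an isomorphism needs only that $M_1,M_2\in\M$ and Yoneda, not that $\M$ contains the projectives.
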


\begin{proof}
It follows from the definition of $\Up$ that its kernel objects are isomorphisms in $\CS(\M).$ Hence, the first equivalence follows from \ref{Objective}. The second one is just Proposition \ref{Propop2.2}.
\end{proof}

\subsection{The functor $i_{\la}: \mmod\M \lrt \mmod\underline{\M}$} \label{ilambda}
Recollements for triangulated categories appeared first in \cite{BBD} to study construction of the category of perverse sheaves on a singular space. For a good account for studying recollements in abelian categories see \cite{Ps}.

Let $\La$ be an artin algebra and $\SX$ be a contravariantly finite subcategory of $\mmod \La$ containing $\prj \La$. By \cite[Theorem 3.7]{AHK}, we have a recollement
\[\xymatrix{\mmodd \SX \ar[rr]^{i}  && \mmod\SX \ar[rr]^{\va} \ar@/^1pc/[ll]^{i_{\rho}} \ar@/_1pc/[ll]_{i_{\la}} && \mmod \La, \ar@/^1pc/[ll]^{\va_{\rho}} \ar@/_1pc/[ll]_{\va_{\la}} }\]
in which $\mmodd\SX:=\Ker\va$ is the full subcategory of $\mmod\SX$ consisting of all functors $F$ such that $\va(F)=0$, equivalently, $\mmodd\CX$ consists of all functors $F \in \mmod\SX$ that vanish on $\prj \La$, the full subcategory of $\mmod\La$ of all projective $\La$-modules.

The canonical functor $\SX \lrt \underline{\SX}$ induces the functor $\mathfrak{F}: \Mod\underline{\SX} \lrt \Mod\SX$. By \cite[Proposition 4.1]{AHK} the restriction of this functor to $\mmod\underline{\SX}$ induces an equivalence of categories $\mmod\underline{\SX} \simeq \mmodd\SX$. Moreover, $\mmod\underline{\SX}$ is an abelian category. Throughout we treat $\mmod\underline{\SX}$ in this way. The above recollement will be denoted by $\SR(\SX,\La)$.

In this subsection, we are interested in the case where $\SX=\M$ is an $n$-cluster tilting subcategory of $\mmod\La$. In particular, we study the functor $i_{\la}$ explicitly. To do this, note that in view of  \cite[Proposition 2.8]{PV}, for every $F \in \mmod\M$, there exists an exact sequence
\[0 \lrt \Ker(\eta_F) \lrt \va_{\la}\va(F) \st{\eta_F} \lrt F  \lrt ii_{\la}(F) \lrt  0,\]
where $\eta_F$ denotes the counit of adjunction. So to know $i_{\la}(F)$, it is enough to know $\eta_F$. Let us first recall the definitions of the functors $\va$ and $\va_{\la}$.

Let $F \in \mmod \M$ and $\M(-, M^1) \lrt \M(-, M^0) \lrt F \lrt 0$ be a minimal projective presentation of $F.$ Then, by \cite[Proposition 3.1]{AHK}, $\va(F)$ is defined as the cokernel of the morphism $M^1 \lrt M^0$ in $\mmod \La.$ Let $\varphi: F \lrt F'$ be a morphism in $\mmod \M$. Then it can be lifted to the projective presentations of $F$ and $F'$ and hence, by applying Yoneda's lemma, we get a morphism $\va(\varphi): \va(F) \lrt \va(F')$.

Now let $M \in \mmod\La$ be an arbitrary module. Let $P\rt Q\rt M \rt 0$ be a minimal projective presentation of $M$ in $\mmod \La.$  By
\cite[Proposition 3.6]{AHK}, we set \[\va_{\la}(M) := \Coker (\M(-, P) \lrt \M(-, Q) ).\]
$\va_{\la}$ on morphisms is defined in an obvious way.

Finally, the counit $\eta_F: \va_{\la}\va(F) \lrt F$ is defined as follows. Let $P \lrt Q \lrt \va_{\la}(F) \lrt 0$ be a minimal projective presentation of $\va(F)$ in $\mmod \La.$  So we have the commutative diagram
\[\xymatrix{P \ar[r] \ar@{.>}[d] & Q \ar[r] \ar@{.>}[d] & \va(F) \ar[r] \ar@{=}[d]  & 0 \\	M^1 \ar[r] & M^0 \ar[r] & \va(F) \ar[r] & 0.}\]
 in $\mmod \La$.
Applying the Yoneda functor to the left square of this diagram leads to the following commutative diagram in $\mmod \M$
\[\xymatrix{\M( - , P) \ar[r] \ar[d] & \M( - , Q) \ar[r] \ar[d] & \va_{\la}(\va(F)) \ar[r] \ar@{.>}[d]^{\eta_F}  & 0 \\ \M( - , M^1) \ar[r] & \M( - , M^0) \ar[r] & F \ar[r] & 0.}\]
So we get the counit.

\subsection{The functor $\Phi: \CS(\M) \lrt \mmod\underline{\M}$} \label{Phi-Sub}
Now we have the necessary ingredients to introduce the functor $\Phi$. In this subsection, we assume that $\La$ is a self-injective artin algebra.

Using the notations of the previous subsections, consider the composition
\[ \CS(\M)  \st{\Up}\lrt \widetilde{\SP}^{\leq 1}(\M) \st{i_{\la}|}{\lrt}\mmod\underline{\M}\]
and set $\Phi := i_{\la}|\circ \Up.$

\begin{stheorem}\label{Phi-Theorem}
The functor $\Phi: \CS(\M)\lrt \mmod\underline{\M}$ is full, dense and objective.
\end{stheorem}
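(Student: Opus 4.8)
The plan is to realize $\Phi$ as a composition of two functors that are each full, dense and objective, and then to invoke the fact recorded in \ref{Objective} that such a composition is again full, dense and objective. By construction $\Phi=i_{\la}|\circ\Up$, and Theorem~\ref{Proposition 2.3} already gives that $\Up\colon\CS(\M)\lrt\widetilde{\SP}^{\leqslant 1}(\M)$ is full, dense and objective; so the theorem reduces to showing that the restriction $i_{\la}|\colon\widetilde{\SP}^{\leqslant 1}(\M)\lrt\mmod\underline{\M}$ is full, dense and objective. Since $i_{\la}$ is a left adjoint it is right exact, and unwinding the explicit (co)unit description of Subsection~\ref{ilambda} shows that $\va(\M(-,Z))=Z$ for $Z\in\M$ and hence that $i_{\la}$ carries the representable functor $\M(-,Z)$ to the functor on $\underline{\M}$ represented by $Z$, which I denote $\underline{\M}(-,Z)$. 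Applying $i_{\la}$ to the projective presentation $\M(-,s(f))\to\M(-,t(f))\to\Up(f)\to 0$ then produces the convenient formula
\[\Phi(f)\ \cong\ \Coker\bigl(\underline{\M}(-,s(f))\longrightarrow\underline{\M}(-,t(f))\bigr)\qquad(\text{the map being induced by }f),\]
computed in the abelian category $\mmod\underline{\M}$, and — because $\Up$ is already full, dense and objective — it is enough to check directly from this formula that $\Phi$ is full, dense and objective. Throughout I use that $\La$ is self-injective, so that $\prj\La=\inj\La\subseteq\M$ and every projective-injective $\La$-module becomes zero in $\underline{\M}$.

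\textbf{Density.} Given $G\in\mmod\underline{\M}$, take a presentation $\underline{\M}(-,M_1)\to\underline{\M}(-,M_0)\to G\to 0$ and lift the connecting map to a morphism $h\colon M_1\to M_0$ in $\M$. Choosing an injective envelope $\iota\colon M_1\hookrightarrow I$ (with $I\in\inj\La=\prj\La\subseteq\M$ and $\underline{I}=0$), the map $\binom{h}{\iota}\colon M_1\to M_0\oplus I$ is a monomorphism in $\M$, hence an object of $\CS(\M)$, whose image in $\underline{\M}$ equals that of $h$; by the displayed formula $\Phi\bigl(\binom{h}{\iota}\bigr)\cong G$.

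\textbf{Fullness and objectivity.} A morphism $\Phi(f)\to\Phi(f')$ lifts, through the two presentations and Yoneda, to morphisms $s(f)\to s(f')$ and $t(f)\to t(f')$ in $\underline{\M}$ compatible with the maps induced by $f$ and $f'$; lifting these to $\La$-module maps between the respective sources and targets, the obstruction $f'\beta_1-\beta_2 f$ factors through a projective-injective module, and injectivity of that module lets one adjust $\beta_2$ — without changing its image in $\underline{\M}$ — to obtain an honest morphism of $\CS(\M)$ that $\Phi$ sends to the prescribed map; this gives fullness. For objectivity, let $(\alpha_1,\alpha_2)\colon f\to f'$ satisfy $\Phi(\alpha_1,\alpha_2)=0$. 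The formula yields $\underline{s}\colon t(f)\to s(f')$ in $\underline{\M}$ with $\underline{f'}\,\underline{s}=\underline{\alpha_2}$, so $f's-\alpha_2=dc$ factors through a projective-injective $P$ (say $c\colon t(f)\to P$, $d\colon P\to t(f')$); using injectivity of $P$ and of injective envelopes to extend maps along $f$, one decomposes $(\alpha_1,\alpha_2)$ as a sum of morphisms $f\to f'$ each factoring through one of the objects $t(f)\xrightarrow{\binom{\id}{c}}t(f)\oplus P$ (isomorphic in $\CS(\M)$ to $\bigl(t(f)\xrightarrow{\id}t(f)\bigr)\oplus(0\to P)$), the injective envelope $s(f)\hookrightarrow I_1$, and $0\to P$. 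Each of these is a kernel object of $\Phi$ — indeed $\Phi$ annihilates exactly the monomorphisms $N\to N'$ whose image in $\underline{\M}$ is a split epimorphism, in particular all $N\xrightarrow{\id}N$ and all $N\to I$ with $I$ projective-injective — so $(\alpha_1,\alpha_2)$ factors through their direct sum. Hence $\Phi$ is objective, with kernel generated by the objects $M\xrightarrow{\id}M$ and $M\to I$, $I$ projective-injective.

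Combining these with Theorem~\ref{Proposition 2.3} and the composition principle of \ref{Objective} finishes the proof. I expect the \textbf{main obstacle} to be the objectivity step: one must identify the kernel objects of $\Phi$ precisely and keep careful track of the correction terms in the decomposition above, all of which are under control only because self-injectivity forces them through projective-injective modules. A second, more routine, technical point is the identification $i_{\la}\M(-,Z)\cong\underline{\M}(-,Z)$ that powers the clean formula for $\Phi$, which requires carefully unwinding the explicit (co)unit construction of Subsection~\ref{ilambda}.
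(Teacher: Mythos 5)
Your proposal is correct, but it proves fullness and objectivity by a genuinely different route than the paper. Your key move — checking $i_{\la}(\M(-,Z))\cong\underline{\M}(-,Z)$ and using right exactness of the left adjoint $i_{\la}$ to get the formula $\Phi(f)\cong\Coker\bigl(\underline{\M}(-,s(f))\rt\underline{\M}(-,t(f))\bigr)$ — is sound, and your density construction ${[h\ \iota]}^{\rm t}\colon M_1\rt M_0\oplus I$ is exactly the paper's. For fullness, however, the paper works at the level of $i_{\la}|$ on $\widetilde{\SP}^{\leqslant 1}(\M)$: it shows $\Ext^1_{\M}(F,K)=0$, where $K=\Ker(G\rt ii_{\la}(G))$, by choosing a projective resolution of $K$ whose degree-zero term is $\M(-,Q)$ with $Q$ projective-injective and proving the relevant chain map null-homotopic; you instead lift a map $\Phi(f)\rt\Phi(f')$ through the stable presentations and correct the representative $\beta_2$ by a summand factoring through a projective-injective, extending along the monomorphism $f$ — the same use of self-injectivity, but carried out on modules rather than on $\Ext$-groups. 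For objectivity the paper gets the statement almost for free from the recollement (a map killed by $i_{\la}$ factors through $\va_{\la}\va(G)$, a kernel object) together with the composition principle of \ref{Objective}, whereas your direct decomposition of $(\alpha_1,\alpha_2)$ into pieces factoring through $M\st{1}\lrt M$, $0\rt P$ and $s(f)\hookrightarrow I_1$ also goes through: writing $\alpha_2=f's+dc$ with $P$ projective-injective, the summand $(sf,f's)$ factors through $t(f)\st{1}\lrt t(f)$, the map $cf$ extends along the injective envelope of $s(f)$ to handle $(\alpha_1-sf,\,\cdot\,)$, and the leftover second component is killed by $f$ and channels through $0\rt P$. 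Your route is more hands-on and has the advantage of exhibiting the kernel objects explicitly (the paper records them only afterwards, in Corollary \ref{Phi-Corollary}); the paper's route is more structural and makes objectivity essentially automatic. The objectivity step in your write-up is compressed rather than gapped: the relation $\underline{\alpha_2}=\underline{f'}\,\underline{s}$ does follow from $\Phi(\alpha_1,\alpha_2)=0$ by projectivity of $\underline{\M}(-,t(f))$, and the two extension-along-a-mono steps you name are exactly what is needed to complete it.
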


\begin{proof}
We start the proof by showing that $\Phi$ is full. Since we have already seen that $\Up$ is full, we just need to show that $i_{\la}|$ is full.
Let $F$ and $G$ be functors in $\widetilde{\SP}^{\leqslant 1}(\M)$ and $\gamma: i_{\la}(F)\rt i_{\la}(G)$ be a morphism in $\mmod \underline{\M}$.
Consider the diagram
\[\xymatrix{\va_{\la}\va(F) \ar[r]^{ \ \ \ \eta_F}  & F \ar[r] \ar@{.>}[d]^{\delta} & ii_{\la}(F) \ar[r] \ar[d]^{i\gamma}  & 0 \\ \va_{\la}\va(G) \ar[r]^{ \ \ \ \eta_G} & G \ar[r] & ii_{\la}(G)\ar[r] & 0.}\]
with exact rows. Let $K$ denotes the kernel of $G \lrt ii_{\la}(G)$. We show that $\Ext^1_{\M}(F, K)=0.$ This implies the fullness of $i_{\la}|$, as in this case we deduce that there exists a morphism $\delta: F \lrt G$ such that the right square of the above diagram is commutative, i.e. $i_{\la}|(\delta)=\gamma$.
To show the vanishing of $\Ext^1_{\M}(F, K)$ we apply the known isomorphism
\[\Ext^1_{\M}(F, K)\simeq \Hom_{\mathbb{K}}(\mathbf{P}_F, \mathbf{P}_K[1]),\]
where $\mathbb{K}$ denotes the homotopy category of complexes of functors of $\mmod \M$	and $\mathbf{P}_F$ and $\mathbf{P}_K$ denote deleted projective resolutions of $F$ and $K$, respectively. Since $F \in \widetilde{\SP}^{\leqslant 1}(\M)$, a projective resolution of it is of the form
\[0 \lrt \M( - , A)\st{\M( - , f)}\lrt \M( - , B)\rt F \rt 0,\]
where $f:A \lrt B$ is a monomorphism. Moreover, by construction of $\va_{\la}$, there is a projective presentation  $\M( - , P)\rt \M( - , Q)\rt \va_{\la}\va(F)\rt 0$ of $\va_{\la}\va(F)$ such that $P, Q \in \prj\La$. Because of the epimorphism $\va_{\la}\va(G)\rt K \rt 0$, we can choose a deleted  projective resolution of $K$
\[\mathbf{P}_G: \cdots \rt \M( - , M)\rt \M( - , N)\rt \M( - , Q)\rt 0,\]
where $\M( - , Q)$ is its zero's term.
Now consider a chain map
\[\xymatrix{\cdots \ar[r]&0 \ar[r] \ar[d]&0\ar[r] \ar[d] & \M( - , A) \ar[r] \ar[d]^{\M( - , g)} & \M(-, B) \ar[r] \ar[d]  & 0 \\ 	\cdots\ar[r] & \M( - , M) \ar[r]&	\M( - , N) \ar[r] & \M( - , Q) \ar[r] & 0 \ar[r] & 0.}\]
from $\mathbf{P}_F$ to $\mathbf{P}_G$. Since $Q$ is a projective module and $\La$ is a self-injective algebra, $Q$ is injective and hence, in view of Yoneda's lemma, the morphism $g: A \lrt Q$ can be extended to a morphism $h: B \lrt Q$. Another use of Yoneda's lemma, implies the extension of the morphism $\M( - , g)$ to a morphism from $\M( - , B)$ to $\M( - , Q)$. This, in turn, implies that the chain map is null-homotopic. Hence $\Hom_{\mathbb{K}}(\mathbf{P}_F, \mathbf{P}_K[1])=0$, so the result.

Now we show that $\Phi$ is dense. Pick $F  \in \mmod\underline{\M}$. Hence $F \in \mmod\M$ and $F|_{\prj\La}=0$. Consider a projective presentation \[\underline{\M}( - , M_1)\st{\M(-, f)}\lrt\underline{\M}(-, M_2)\]
of $F$. Let $i: M_1\lrt I$ be the injective envelop of $M_1.$ Consider the object $[f~~i]^{\rm{t}}:M_1 \rt M_2\oplus I$ in $\CS(\M).$ We claim that $\Phi([f~~i]^{\rm{t}})=F$. To see this, set
\[G:=\Up([f~~i]^{\rm{t}})=\Coker(\M( - , M_1) \lrt \M( - , M_2 \oplus I)).\]
Hence $\va(\Up([f~~i]^{\rm{t}}))=\Coker(M_1 \lrt M_2\oplus I). $ Let $Q \lrt M_2$ be a projective cover of $M_2$. Hence we get an exact sequence
\[P \lrt Q\oplus I \lrt \va(G) \lrt 0,\]
for some projective module $P$. Therefore, we get the following commutative diagram
\[\xymatrix{\M( - , P) \ar[r] \ar[d] & \M( - , Q\oplus I) \ar[r] \ar[d] & \va_{\la}\va(G)\ar[r] \ar[d] & 0 \\  \M( - , M_1) \ar[r] \ar[d] & \M( - , M_2\oplus I) \ar[r] \ar[d] & G \ar[r] \ar@{.>}[d] & 0 \\  \underline{\M}( - , M_1) \ar[r] & \underline{\M}( - ,M_2) \ar[r] & F \ar[r] & o. }\]
Hence $\Phi([f~~i]^{\rm{t}})=\Coker(\va_{\la}\va(G) \lrt G)=F$.

Finally, we show that $\Phi$ is objective. Since by Theorem \ref{Proposition 2.3}, $\Up$ is full, dense and objective, by the last paragraph of the Subsection \ref{Objective}, we just need to show that the restricted functor $i_{\la}|_{\widetilde{\SP}^{\leqslant 1}(\M)}$ is objective. To this end, let $\theta: F \lrt G$ be a morphism in $\widetilde{\SP}^{\leqslant 1}(\M)$ such that $i_{\la}|_{\widetilde{\SP}^{\leqslant 1}(\M)}(\theta)=0$. Hence we have the commutative diagram
\[\xymatrix{\va_{\la}\va(F) \ar[r]  & F \ar[r] \ar[d]^{\theta} & i_{\la}(F) \ar[r] \ar[d]^{0}  & 0 \\ \va_{\la}\va(G) \ar[r] & G \ar[r] & i_{\la}(G)\ar[r] & 0.}\]
Since the lower row is exact, $\theta$ factors through the functor $\va_{\la}\va(G)$. But by the definition of a recollement, $\va_{\la}\va(G)$ is an kernel object of $i_{\la}$. Hence the proof is complete.
\end{proof}

\begin{scorollary}\label{Phi-Corollary}
With the above notations, there exists an equivalence of abelian categories
\[{\CS(\M)}/{\CU} \simeq \mmod\underline{\M},\]
where $\CU$ is the subcategory of ${\CS(\M)}$ generated by the objects of the form $(M \st{1}\lrt M)$ and $(M \st{f} \lrt P)$, where $M \in \M$ and $P \in \prj\La.$
\end{scorollary}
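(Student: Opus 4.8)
The plan is to derive the Corollary from Theorem~\ref{Phi-Theorem} together with the general principle recalled in Subsection~\ref{Objective}: a full, dense and objective functor whose kernel is generated by a class $\SK$ induces an equivalence of the source modulo $\langle\SK\rangle$ onto the target. Since $\Phi$ is already known to be full, dense and objective, and since $\mmod\underline{\M}$ is abelian (so that the quotient $\CS(\M)/\CU$ automatically inherits an abelian structure through the equivalence), it remains only to prove that the class $\{Z\in\CS(\M)\mid\Phi(Z)=0\}$ of kernel objects of $\Phi$, which is clearly closed under direct summands, coincides with $\add\CU$. Before treating the two inclusions I would record two elementary facts about $i_{\la}$ that make everything computable: it is right exact, being a left adjoint in the recollement $\SR(\M,\La)$, and it sends a representable $\M(-,N)$ to $\underline{\M}(-,N)$, which is immediate from the adjunction $i_{\la}\dashv i$ and Yoneda. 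Combining these with the definition of $\Up$ yields, for any object $M_1\xrightarrow{u}M_2$ of $\CS(\M)$, the formula $\Phi(M_1\xrightarrow{u}M_2)\cong\Coker\bigl(\underline{\M}(-,M_1)\xrightarrow{(-,\underline{u})}\underline{\M}(-,M_2)\bigr)$.

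The inclusion $\add\CU\subseteq\{\text{kernel objects}\}$ is then a direct computation. For $M\xrightarrow{1}M$ one has $\Up(M\xrightarrow{1}M)=\Coker(\mathrm{id})=0$, hence $\Phi(M\xrightarrow{1}M)=0$; for $P\in\prj\La$ the self-injectivity of $\La$ makes $P$ projective-injective, so $\underline{\M}(-,P)=0$, and the formula above gives $\Phi(M\xrightarrow{f}P)=\Coker\bigl(\underline{\M}(-,M)\to 0\bigr)=0$.

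The reverse inclusion $\{\text{kernel objects}\}\subseteq\add\CU$ is the heart of the matter and the step I expect to be the main obstacle. Start with a kernel object $Z=(M_1\xrightarrow{f}M_2)$. The formula for $\Phi$ forces $(-,\underline{f})$ to be an epimorphism, so evaluating at $M_2$ and applying Yoneda shows that $\underline{f}$ is a split epimorphism in $\underline{\M}$; thus there is $g\colon M_2\to M_1$ in $\M$ with $fg-1_{M_2}=\beta\alpha$ for some $\alpha\colon M_2\to I$, $\beta\colon I\to M_2$ and some $I\in\prj\La$. Then $(f\ \ \beta)\colon M_1\oplus I\to M_2$ is a split epimorphism with section $\binom{g}{-\alpha}$, so $M_1\oplus I\cong M_2\oplus M_2'$ with $M_2':=\Ker(f\ \ \beta)\in\M$ (using idempotent completeness of $\M$ and $I\in\M$). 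Writing $j=\binom{j_1}{j_2}\colon M_2'\hookrightarrow M_1\oplus I$ for the inclusion of this kernel, the relations $(f\ \ \beta)j=0$ and $fg-\beta\alpha=1_{M_2}$ let one check, via a short unitriangular change of basis on source and target, that $Z\oplus(I\xrightarrow{1}I)=(M_1\oplus I\xrightarrow{\mathrm{diag}(f,1)}M_2\oplus I)$ is isomorphic in $\CS(\M)$ to $(M_2\oplus M_2'\xrightarrow{\mathrm{diag}(1,j_2)}M_2\oplus I)=(M_2\xrightarrow{1}M_2)\oplus(M_2'\xrightarrow{j_2}I)$. A small separate check shows that $j_2$ is a monomorphism, so $(M_2'\xrightarrow{j_2}I)$ is indeed an object of $\CS(\M)$ of the allowed form. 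Hence $Z\oplus(I\xrightarrow{1}I)$ lies in $\add\CU$; since the kernel objects of $\Phi$ are closed under direct summands and $(I\xrightarrow{1}I)\in\CU$, we get $Z\in\add\CU$. This completes the identification of the kernel objects, and the Corollary follows from Subsection~\ref{Objective}.

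The delicate point in the last paragraph is staying inside $\CS(\M)$ throughout: one must make sure that the objects produced ($M_2'$ and the maps $j_2$, $\mathrm{diag}(1,j_2)$, etc.) are genuine monomorphisms between objects of $\M$, and that the comparison of $Z\oplus(I\xrightarrow{1}I)$ with the split object is an isomorphism in $\CS(\M)$ — i.e. a commuting square with invertible vertical maps — and not merely in $\CH(\mmod\La)$. The matrix bookkeeping itself is elementary, but it is exactly where all of these conditions have to be verified.
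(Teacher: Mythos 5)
Your proposal is correct and follows the same route as the paper: invoke Theorem~\ref{Phi-Theorem} and then identify the kernel objects of $\Phi$ with $\add\CU$, a step the paper dismisses as following ``easily from the definition of $\Phi$'' but which you verify in full (the formula $\Phi(f)\cong\Coker(\underline{\M}(-,M_1)\to\underline{\M}(-,M_2))$ via right exactness of $i_{\la}$, and the split-epimorphism-in-$\underline{\M}$ plus matrix argument showing $Z\oplus(I\xrightarrow{1}I)\cong(M_2\xrightarrow{1}M_2)\oplus(M_2'\xrightarrow{j_2}I)$). The details check out, including the monomorphy of $j_2$ and the membership $M_2'\in\M$, so your write-up is a faithful, fully justified version of the paper's argument.
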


\begin{proof}
By the above theorem $\Phi$ is full, dense and objective. Hence we just should note that the kernel objects of $\Phi$ are exactly those in the additive closure of a subcategory generated by all monomorphisms as in the statement. This follows easily from definition of $\Phi$. So we are done.
\end{proof}

\section{The functor $\Psi: \CS(\M) \lrt \mmod\underline{\M}$} \label{Psi}
In this section we introduce another functor on $\CS(\M)$. Throughout assume that $\La$ is an arbitrary artin algebra and $\M$ is an $n$-cluster tilting subcategory of $\mmod\La$.

Let $M_1 \stackrel{f}\rt M_2$ be an object of $\CS(\M)$. Since $\M$ is an $n$-cluster tilting subcategory, we may take an $n$-cokernel of $f$ which results to an $n$-exact sequence
\[0 \lrt M_1 \st{f}{\lrt} M_2 \st{d^1}{\lrt} M^1 \st{d^2}{\lrt} M^2  \lrt \cdots \lrt M^{n-1} \st{d^{n}}{\lrt} M^n \lrt 0 \]
Hence the following induced sequence
\[0 \rt \M( - , M_1) \rt \M( - , M_2) \rt \M( - , M^1) \rt \cdots \rt \M( - , M^n) \rt F \rt 0 \]
is exact, where $F$ is the cokernel of the morphism $\M( - , M^{n-1}) \lrt \M( - , M^n)$. Clearly $F$ vanishes on projective modules and so $F \in \mmod\underline{\M}$.
We define a functor \[\Psi: \CS(\M) \lrt \mmod\underline{\M} \] by setting $\Psi(M_1 \stackrel{f}\rt M_2)=F.$ First of all, since every two $n$-cokernels of $f$ are homotopy equivalent, we deduce that the definition of $\Psi$ is independent of the choice of the $n$-cokernel of $F$. Now let $f: M_1 \rt M_2$ and $f': M'_1 \rt M'_2$ be two objects of $\CS(\M)$ and consider a morphism $\xymatrix{f \ar[r]^{\alpha_1}_{\alpha_2} & f' }$. By the property of $n$-exact sequences, we deduce that $({\alpha_1}, {\alpha_2})$ lifts to the following morphism of $n$-exact sequences
\begin{equation}\label{equiation 2.1}
\xymatrix{0 \ar[r] & M_1 \ar[r]^{f} \ar[d]^{\alpha_1} & M_2 \ar[r]^{d^{1}} \ar[d]^{\alpha_2} & M^1 \ar[r] \ar@{.>}[d]^{\alpha^1} & \cdots \ar[r] & M^{n-1} \ar[r]^{d^n} \ar@{.>}[d]^{\alpha^{n-1}} &  M^n \ar[r] \ar@{.>}[d]^{\alpha^{n}} & 0\\ 0 \ar[r] & M'_1 \ar[r]^{f'}  & M'_2 \ar[r]^{d'^{1}} & M'^1 \ar[r] & \cdots \ar[r] & M'^{n-1} \ar[r]^{d'^n} &  M'^n \ar[r] & 0.}
\end{equation}
Yoneda's lemma now come to play to induce the commutative diagram
\[\xymatrix{0 \ar[r] & \M( - , M_1) \ar[r] \ar[d]^{\M( - , \alpha_1)} & \M(- , M_2) \ar[r] \ar[d]^{\M( - , \alpha_2)}  & \cdots \ar[r] & \M( - , M^n) \ar[r] \ar[d]^{\M( - , \alpha^{n})} & F \ar[r] \ar@{.>}[d]^{\eta} & 0\\ 0 \ar[r] & \M( - , M'_1) \ar[r]  & \M(- , M'_2) \ar[r]  & \cdots \ar[r] & \M( - , M'^n) \ar[r] & F' \ar[r] & 0.}\]
We set $\Psi({\alpha_1}, {\alpha_2})=\eta$. Comparison Lemma \cite[Lemma 2.1]{J} implies that $\eta$ is independent of the lifting morphism $\{\alpha^i\}_{1 \leq i \leq n}$.

\begin{theorem}\label{Th-Psi}
The functor $\Psi:\CS(\M) \lrt \mmod\underline{\M}$ is full, dense and objective.
\end{theorem}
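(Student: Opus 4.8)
The plan is to exploit the fact that, by its very construction, $\Psi(M_1\st{f}\lrt M_2)=F$ is produced together with the explicit projective resolution
\[0 \lrt \M( - , M_1) \lrt \M( - , M_2) \lrt \M( - , M^1) \lrt \cdots \lrt \M( - , M^n) \lrt F \lrt 0\]
in $\mmod\M$, coming from a chosen $n$-cokernel of $f$ in the $n$-abelian category $\M$. Each of the three assertions will then be reduced to ordinary homological algebra in $\mmod\M$ --- the comparison theorem for projective resolutions, and the null-homotopy criterion --- transported through the Yoneda embedding $\M\hookrightarrow\mmod\M$, whose fullness and faithfulness convert chain maps, resp.\ contracting homotopies, between complexes of representable functors into morphisms, resp.\ homotopies, of $n$-exact sequences of $\La$-modules. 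The well-definedness of $\Psi$ on morphisms, obtained above from the Comparison Lemma \cite[Lemma 2.1]{J}, guarantees that when a morphism of $\CS(\M)$ is reconstructed from a chain map between two such resolutions, applying $\Psi$ to it returns the map it induced on $H_0$.

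For density, fix $F\in\mmod\underline{\M}$ and choose a projective presentation $\M( - , M^{n-1})\st{\M( - ,d)}\lrt\M( - , M^n)\lrt F\lrt 0$ in $\mmod\M$; by Yoneda it is induced by a morphism $d\colon M^{n-1}\lrt M^n$ with $M^{n-1},M^n\in\M$. Evaluating the presentation at an arbitrary $P\in\prj\La$ and using $F(P)=0$ shows $\Hom_\La(P,M^{n-1})\lrt\Hom_\La(P,M^n)$ is surjective; taking $P$ to be a projective cover of $M^n$ and lifting it through $d$ shows that $d$ is an epimorphism, in $\mmod\La$ and hence in $\M$. Since $\M$ is $n$-abelian, $d$ completes to an $n$-exact sequence
\[0 \lrt M_1 \st{f}{\lrt} M_2 \lrt M^1 \lrt \cdots \lrt M^{n-1} \st{d}{\lrt} M^n \lrt 0;\]
its leftmost map $f$ is a monomorphism of $\La$-modules (the underlying sequence being exact), so $(M_1\st{f}\lrt M_2)\in\CS(\M)$, and computing $\Psi(f)$ with this $n$-cokernel of $f$ returns precisely $\Coker\big(\M( - ,M^{n-1})\lrt\M( - ,M^n)\big)=F$.

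For fullness, let $f\colon M_1\lrt M_2$ and $f'\colon M'_1\lrt M'_2$ be objects of $\CS(\M)$ with $n$-exact sequences as above, and let $\eta\colon\Psi(f)\lrt\Psi(f')$ be a morphism. By the comparison theorem, $\eta$ lifts to a chain map between the associated representable resolutions of $\Psi(f)$ and $\Psi(f')$. Each component of this chain map is a morphism between representables, so by Yoneda it is of the form $\M( - ,\alpha^\bullet)$; by faithfulness of Yoneda the family $\alpha^\bullet$ is a chain map between the two $n$-exact sequences, i.e.\ a morphism of $n$-exact sequences, and its first two components give a morphism $(\alpha_1,\alpha_2)\colon f\lrt f'$ in $\CH(\M)$, hence in $\CS(\M)$ since $f,f'$ are monomorphisms. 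As $\alpha^\bullet$ is one lifting of $(\alpha_1,\alpha_2)$ to a morphism of $n$-exact sequences, $\Psi(\alpha_1,\alpha_2)$ is the map induced by $\M( - ,\alpha^\bullet)$ on cokernels, which is $\eta$; thus $\Psi$ is full.

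For objectivity, suppose $(\alpha_1,\alpha_2)\colon f\lrt f'$ in $\CS(\M)$ has $\Psi(\alpha_1,\alpha_2)=0$. Choosing a lifting of $(\alpha_1,\alpha_2)$ to a morphism $\alpha^\bullet$ of $n$-exact sequences as before, the chain map $\M( - ,\alpha^\bullet)$ between the two representable resolutions induces $0$ on $H_0$, hence is null-homotopic; reading the contracting homotopy in its two lowest degrees and translating through Yoneda produces morphisms $\sigma\colon M_2\lrt M'_1$ and $\tau\colon M^1\lrt M'_2$ with $\alpha_1=\sigma f$ and $\alpha_2=f'\sigma+\tau d^1$, where $M_2\st{d^1}\lrt M^1$ is the second map of the $n$-exact sequence of $f$. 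Hence $(\alpha_1,\alpha_2)=(\sigma f,\,f'\sigma)+(0,\,\tau d^1)$, in which the first summand factors as $f\st{(\sigma f,\,\sigma)}\lrt(M'_1\st{1}\lrt M'_1)\st{(1,\,f')}\lrt f'$ and the second, using $d^1 f=0$, factors as $f\st{(0,\,d^1)}\lrt(0\lrt M^1)\st{(0,\,\tau)}\lrt f'$. Both $(M'_1\st{1}\lrt M'_1)$ and $(0\lrt M^1)$ are kernel objects of $\Psi$, so $(\alpha_1,\alpha_2)$ factors through their direct sum; therefore $\Psi$ is objective, with kernel generated by the objects of the form $(M\st{1}\lrt M)$ and $(0\lrt N)$, $M,N\in\M$. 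The only delicate point is the index bookkeeping in this last step --- locating $\sigma$ and $\tau$ among the $n+2$ components of the homotopy --- but there is no genuine obstacle: $\Psi$ is designed to output honest projective resolutions in $\mmod\M$, and the one substantive observation is that $F$ vanishing on projectives forces the terminal map of any presentation of $F$ to be an epimorphism, which is exactly what makes the $n$-abelian completion in the density step possible.
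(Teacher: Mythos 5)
Your proof is correct and follows essentially the same route as the paper: the representable resolutions arising from $n$-(co)kernels, the comparison theorem for fullness, completion of the epimorphism $M^{n-1}\rt M^n$ to an $n$-exact sequence for density, and the null-homotopy for objectivity. The only (harmless) variations are that you justify explicitly why the presentation map may be taken to be an epimorphism (the paper asserts this using $F|_{\prj\La}=0$ without detail) and that in the objectivity step you split $(\alpha_1,\alpha_2)$ as a sum factoring through $(M'_1\st{1}\lrt M'_1)\oplus(0\lrt M^1)$, whereas the paper factors through the split monomorphism $M'_1\lrt M'_1\oplus M'_2$ and cites \cite[Proposition 2.6]{J} -- these middle objects are isomorphic kernel objects, so the arguments coincide.
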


\begin{proof}
Let $f: M_1 \rt M_2$ and $f': M'_1 \rt M'_2$ be two objects of $\CS(\M)$ with $\Psi(f)=F$ and $\Psi(f')=F'$. Let $\eta: F \lrt F'$ be a morphism in $\mmod\underline{\M}$. So we have the following commutative diagram
\begin{equation}\label{diag-Lifting}
\xymatrix@C=0.45cm{0 \ar[r] & \M( - , M_1) \ar[r] \ar@{.>}[d]^{\M( - , \alpha_1)} & \M(- , M_2) \ar[r] \ar@{.>}[d]^{\M( - , \alpha_2)}  & \M(- , M^1) \ar[r] \ar@{.>}[d]^{\M( - , \alpha^1)}  & \cdots \ar[r] & \M( - , M^n) \ar[r] \ar@{.>}[d]^{\M( - , \alpha^{n})} & F \ar[r] \ar[d]^{\eta} & 0\\ 0 \ar[r] & \M( - , M'_1) \ar[r]  & \M(- , M'_2) \ar[r] & \M(- , M'^1) \ar[r]  & \cdots \ar[r] & \M( - , M'^n) \ar[r] & F' \ar[r] & 0.}
\end{equation}
Since rows are projective resolutions, $\eta$ lifts to a morphism of the resolutions and hence, by Yoneda's lemma, we conclude that $\Psi$ is full.

To see that $\Psi$ is dense, pick $F \in \mmod\underline{\M}$. So there exists an exact sequence
\[\M( - , M^{n-1}) \lrt \M( - , M^n) \lrt F \lrt 0\]
such that $M^{n-1} \lrt M^n$ is an epimorphism. By taking an $n$-cokernel of this morphism, we get a monomorphism $M_1 \st{f}{\rt} M_2$ with $\Psi(f)=F$.

So it remains to prove that $\Psi$ is objective. Let $\xymatrix{f \ar[r]^{\alpha_1}_{\alpha_2} & f' }$ be a morphism of $f: M_1 \rt M_2$ to $f': M'_1 \rt M'_2$ in $\CS(\M)$ such that $\eta=\Psi({\alpha_1}, {\alpha_2})=0$. Then the lifting of $\eta$ as in diagram \ref{diag-Lifting} above is null-homotopic.
In particular, by applying Yoneda's lemma, we get morphisms $s^0: M_2 \lrt M'_1$ and $s^1:M^1 \lrt M'_2$
\[\xymatrix{0 \ar[r] & M_1 \ar[r]^{f} \ar[d]_{\alpha_1} & M_2 \ar[r]^{d^{1}} \ar[d]_{\alpha_2} \ar@{.>}[dl]_{s^0} & M^1  \ar[d]^{\alpha^1} \ar@{.>}[dl]_{s^1} \\ 0 \ar[r] & M'_1 \ar[r]^{f'}  & M'_2 \ar[r]^{d'^{1}} & M'^1 }\]
such that $\alpha_1=s^0f$ and $\alpha_2=f's^0+s^1d^1$. Therefore $({\alpha_1}, {\alpha_2})$ factors through $M'_1 \st{(1,0)}\lrt M'_1\oplus M'_2$ via the following maps:
\[\begin{tikzcd}
0 \ar[r] & M_1  \ar[r, "f"] \ar[d, "{\alpha_1}"] & M_2 \ar[d, "{{[s^0 \ s^1d^1]}^t}"] \\
0 \ar[r] & M'_1 \ar[r, "{{[1 \ 0]}^t}"]  \ar[d, "{1}"] & M'_1\oplus M'_2 \ar[d, "{[f' \ 1]}"] \\
0 \ar[r] & M'_1  \ar[r, "f'"] & M'_2.
\end{tikzcd}\]
Since the morphism $(1,0)$ in the middle row is a split monomorphism, by \cite[Proposition 2.6]{J}, the middle object is a kernel object. The proof is hence complete.
\end{proof}

\begin{corollary}\label{equiv-Psi}
With the above notations, there exists an equivalence of additive categories
\[{\CS(\M)}/{\CV}  \simeq \mmod\underline{\M},\]
where $\CV$ is the full subcategory of $\CS(\M)$ generated by all finite direct sums of objects of the form $(M\st{1}\lrt M)$ and $(0\lrt M)$, where  $M$ runs over objects of $\M$.
\end{corollary}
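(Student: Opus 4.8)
The plan is to deduce the statement from Theorem~\ref{Th-Psi} exactly as Corollary~\ref{Phi-Corollary} is deduced from Theorem~\ref{Phi-Theorem}. Since $\Psi$ is full, dense and objective, the general principle recalled in \S\ref{Objective} produces an equivalence $\CS(\M)/{\lan\SK\ran}\simeq\mmod\underline{\M}$ for any class $\SK$ whose additive closure is the class of kernel objects of $\Psi$. So the whole content is to verify that $\CV$ is such a choice, i.e. that $\add\CV$ is exactly the class of kernel objects of $\Psi$, equivalently $\lan\CV\ran=\lan\SK\ran$ as ideals of $\CS(\M)$.

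For one inclusion I would show $\Psi$ vanishes on every object of $\CV$. As $\Psi$ is additive and $\CV$ is generated under finite direct sums by the objects $(M\st{1}\lrt M)$ and $(0\lrt M)$ with $M\in\M$, it suffices to treat these. An $n$-cokernel of $1_M$ gives the $n$-exact sequence $0\lrt M\st{1}\lrt M\lrt 0\lrt\cdots\lrt 0\lrt 0$, so $\Psi(M\st{1}\lrt M)=\Coker\big(\M(-,0)\lrt\M(-,0)\big)=0$. For $(0\lrt M)$ one notes that
\[0\lrt 0\lrt M\st{1_M}\lrt M\lrt 0\lrt\cdots\lrt 0\lrt 0\]
is an $n$-exact sequence whose leftmost morphism is $0\lrt M$: applying $\M(-,?)$, resp.\ $\M(?,-)$, to it yields, up to trivial terms, the exact complex $0\lrt\M(-,M)\st{=}\lrt\M(-,M)\lrt 0$, resp.\ its covariant analogue, which are precisely the $n$-cokernel and $n$-kernel conditions (here $n>1$ is used). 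Since $\Psi$ does not depend on the chosen $n$-cokernel, evaluating it on this sequence gives $\Psi(0\lrt M)=\Coker\big(\M(-,0)\lrt\M(-,0)\big)=0$. Hence $\Psi|_{\CV}=0$, and in particular every object of $\add\CV$ is a kernel object of $\Psi$ (its identity morphism is killed by $\Psi$), so $\lan\CV\ran\subseteq\lan\SK\ran$.

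For the reverse inclusion I would re-read the objectivity step in the proof of Theorem~\ref{Th-Psi}: there it is shown that any morphism $(\al_1,\al_2)\colon(f\colon M_1\rt M_2)\lrt(f'\colon M'_1\rt M'_2)$ with $\Psi(\al_1,\al_2)=0$ factors through the object $[1~~0]^{\rm{t}}\colon M'_1\lrt M'_1\oplus M'_2$ of $\CS(\M)$. This object is isomorphic in $\CS(\M)$ to $(M'_1\st{1}\lrt M'_1)\oplus(0\lrt M'_2)$, hence lies in $\CV$. Feeding an arbitrary $\Psi$-annihilated morphism into this argument gives $\lan\SK\ran\subseteq\lan\CV\ran$; feeding the identity $1_K$ of a kernel object $K$ (which has $\Psi(K)=0$, hence $\Psi(1_K)=0$) shows $K$ is a direct summand of an object of $\CV$, and since $\CS(\M)$ is closed under direct summands, $K\in\add\CV$. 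Combining the two inclusions, $\lan\CV\ran=\lan\SK\ran$, whence $\CS(\M)/\CV\simeq\mmod\underline{\M}$.

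The only steps that are not purely formal are the two vanishing computations and the identification $[1~~0]^{\rm{t}}\colon M'_1\lrt M'_1\oplus M'_2\cong(M'_1\st{1}\lrt M'_1)\oplus(0\lrt M'_2)$, all elementary. The one place calling for a little care is exhibiting the degenerate sequence $0\lrt 0\lrt M\st{1_M}\lrt M\lrt 0\lrt\cdots\lrt 0$ and checking it really satisfies Jasso's $n$-kernel and $n$-cokernel axioms; after that, the equality of ideals is bookkeeping entirely parallel to the proofs of Corollaries~\ref{equiv-Phi} and~\ref{Phi-Corollary}, and I do not expect any genuine obstacle.
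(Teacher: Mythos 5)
Your proof is correct, but you establish the key inclusion (kernel objects of $\Psi$ lie in $\add\CV$) by a genuinely different route than the paper. The paper argues homologically: if $\Psi(f)=0$ for $f\colon M_1\rt M_2$, the induced sequence $0 \rt \M(-,M_1)\rt\M(-,M_2)\rt\M(-,M^1)\rt\cdots\rt\M(-,M^n)\rt 0$ is exact, and evaluating it successively at $K^n=M^n$, then $K^{n-1}$, and so on, shows that each short exact sequence $\varepsilon^i\colon 0\rt K^i\rt M^i\rt K^{i+1}\rt 0$ splits, descending step by step until $0\rt M_1\st{f}\rt M_2\rt K^1\rt 0$ itself splits; thus the kernel objects of $\Psi$ are exactly the split monomorphisms, which lie in $\CV$ itself since $\Coker f\in\M$ ($\M$ being closed under summands). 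You instead recycle the objectivity computation from Theorem \ref{Th-Psi}: applied to the identity of a kernel object $K=(K_1\rt K_2)$, it shows $1_K$ factors through $[1\ 0]^{\rm t}\colon K_1\rt K_1\oplus K_2\cong(K_1\st{1}\rt K_1)\oplus(0\rt K_2)\in\CV$, so $K$ is a retract, hence (by idempotent completeness of $\CS(\M)$, which holds as it is a summand-closed subcategory of the Krull--Schmidt category $\CH(\mmod\La)$) a direct summand, of an object of $\CV$. Your route is shorter and purely formal -- the standard trick for identifying kernels of objective functors -- while the paper's route buys the sharper structural fact that the kernel objects of $\Psi$ are precisely the split monomorphisms in $\M$. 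You also check explicitly that $\Psi$ kills the generators of $\CV$ (correctly, via the degenerate $n$-cokernels of $1_M$ and $0\rt M$), a direction the paper leaves implicit; together the two inclusions give that $\add\CV$ is the class of kernel objects, which is exactly what the principle recalled in \ref{Objective} requires, so your bookkeeping with the ideals $\lan\CV\ran=\lan\SK\ran$ is sound.
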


\begin{proof}
In view of \ref{Objective} and the above proposition, we just should show that $\CV$ is generated by the kernel objects of $\Psi$. Let $M_1 \st{f}{\rt} M_2$ be a kernel object of $\Psi$, i.e. $\Psi(f)=0$. We show that $f$ is a split monomorphism. By definition, $f$ extends to an $n$-exact sequence
\begin{equation}\label{Split-1}
0 \lrt M_1 \st{f}{\lrt} M_2 \st{d^1}{\lrt} M^1 \st{d^2}{\lrt} M^2  \lrt \cdots \lrt M^{n-1} \st{d^{n}}{\lrt} M^n \lrt 0
\end{equation}
which induces the exact sequence
\begin{equation}\label{Split-2}
0 \rt \M( - , M_1) \rt \M( - , M_2) \rt \M( - , M^1) \rt \cdots \rt \M( - , M^n) \rt 0.
\end{equation}
For $i \in \{1, \cdots, n-1\}$, set $K^i=\Ker(M^i {\lrt} M^{i+1})$. So we have short exact sequences
\[\varepsilon^i: 0 \rt K^i \rt M^i \rt K^{i+1} \rt 0,  \ i \in \{1, \cdots, n-1\},\]
and $\ 0 \rt M_1 \rt M_2 \rt K^1 \rt 0$, where $K^n:=M^n$. Apply the exact sequence \ref{Split-2} on $K^n=M^n$, implies that $\varepsilon^{n-1}$ is split. Therefore, we get exact sequence
\[ 0 \rt \M( - , M_1) \rt \M( - , M_2) \rt  \cdots \rt \M( - , M^{n-2}) \rt \M( - , K^{n-1}) \rt 0. \]
Now apply this later sequence to $K^{n-1}$ and follow this argument step by step to deduce that the short exact sequence $\ 0 \rt M_1 \st{f}\rt M_2 \rt K^1 \rt 0$ is split. Hence $f$ is a split monomorphism, as it was claimed.
\end{proof}

Here we provide two interesting applications of the equivalence of the Corollary \ref{equiv-Psi}.  Recall that an additive category $\CA$ is called of finite type if the set of all iso-classes of indecomposable objects of $\CX$ is finite. If $\mmod\La$ is of finite type, where $\La$ is an artin algebra, then $\La$ is called of finite representation type.

Let $n \geq 2$. By \cite[Lemma 2.3]{D}, if $\M$ is an $n$-cluster tilting subcategory of an exact Krull-Schmidt, Frobenius $k$-category $\SB$, then $\mmod\underline{\M}$ is of finite type if $\underline{\SB}$ is so. Our first application deals with the finiteness type of $\mmod\underline{\M}$. Note that here we do not need $\La$ to be self-injective.

\begin{proposition}
Let $n \geq 2$. Let $M \in \mmod\La$ be an $n$-cluster tilting module. Then $\Gamma=\underline{\End}_{\La}(M)$ is of finite representation type provided $\La$ is so.
\end{proposition}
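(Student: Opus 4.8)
The plan is to read the statement off from the two quotient descriptions of $\mmod\underline{\M}$ proved above, reducing it to the trivial fact that a full subcategory of $\mmod\La$ has only finitely many indecomposable objects when $\La$ is representation-finite. Write $\M=\add M$, so that $\M$ is an $n$-cluster tilting subcategory of $\mmod\La$ with $n>1$, and the results of Sections \ref{Phi} and \ref{Psi} apply. Since $\underline{\M}$ is the additive category generated by the image $\underline{M}$ of $M$, with $\End_{\underline{\M}}(\underline{M})=\underline{\End}_{\La}(M)=\Ga$, the abelian category $\mmod\underline{\M}$ of finitely presented functors on $\underline{\M}$ is equivalent to $\mmod\Ga$. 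Thus $\Ga$ is of finite representation type exactly when $\mmod\underline{\M}$ is of finite type, and it is the latter that I would prove.

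First I would show that $\CS(\M)$ itself is of finite type. By Corollary \ref{equiv-Phi} and its proof, the cokernel functor $C\colon\CS(\M)\lrt\ola{\M}^{\leqslant 1}$ is full, dense and objective, and its kernel objects are precisely the isomorphisms of $\CS(\M)$, i.e.\ the finite direct sums of objects $(X_i\st{1}\lrt X_i)$ with $X_i$ an indecomposable of $\M$. A Schanuel-type argument then shows that an indecomposable object of $\CS(\M)$ which is not an isomorphism is isomorphic to the minimal $\M$-proper resolution of its cokernel, and that this cokernel is indecomposable; conversely, the minimal $\M$-proper resolution of an indecomposable object of $\ola{\M}^{\leqslant 1}$ is indecomposable in $\CS(\M)$. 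Hence, up to isomorphism, the indecomposable objects of $\CS(\M)$ are the finitely many $(X_i\st{1}\lrt X_i)$ together with the minimal $\M$-proper resolutions of the indecomposable objects of $\ola{\M}^{\leqslant 1}$. As $\ola{\M}^{\leqslant 1}$ is a full subcategory of $\mmod\La$ and $\La$ is representation-finite, $\ola{\M}^{\leqslant 1}$, and therefore $\CS(\M)$, has only finitely many indecomposables.

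Finally, Corollary \ref{equiv-Psi} gives an equivalence $\CS(\M)/{\CV}\simeq\mmod\underline{\M}$, and — unlike the results built from the functor $\Phi$ — it uses nothing about $\La$ beyond its being an artin algebra, which is exactly why no self-injectivity hypothesis is needed. Since a quotient of a Krull--Schmidt category of finite type by an ideal is again of finite type (an indecomposable either remains indecomposable or becomes zero, its endomorphism ring being local, and passing to the idempotent completion creates nothing new, which is harmless here as $\mmod\underline{\M}$ is abelian), we conclude that $\mmod\underline{\M}$, and hence $\Ga=\underline{\End}_{\La}(M)$, is of finite representation type. I expect the main obstacle to be the middle paragraph: identifying the indecomposables of the monomorphism category $\CS(\M)$ via Corollary \ref{equiv-Phi} and the uniqueness of minimal $\M$-proper resolutions. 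The two passages to quotient categories are then purely formal.
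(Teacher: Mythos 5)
Your proposal follows essentially the same route as the paper: identify $\mmod\underline{\M}\simeq\mmod\Gamma$ for $\M=\add M$, use Corollary \ref{equiv-Phi} (the cokernel equivalence $\CS(\M)/\CK\simeq\ola{\M}^{\leqslant 1}$) together with representation-finiteness of $\La$ to see that $\CS(\M)$ has only finitely many indecomposables, and then pass through Corollary \ref{equiv-Psi} to conclude for $\mmod\underline{\M}$, noting correctly that no self-injectivity is needed. Your middle paragraph merely spells out (via minimal $\M$-proper resolutions and the behaviour of indecomposables under additive quotients) the transfer of finite type that the paper's proof asserts more tersely, so the argument is correct and matches the paper's.
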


\begin{proof}
For ease of notation, set $\CM=\add M$. Then $\mmod\underline{\CM} \simeq \mmod\underline{\End}_{\La}(M)$. We show that $\mmod\underline{\CM}$ is of finite type. By Corollary \ref{equiv-Psi}, we observe that $\mmod \underline{\CM}$ is of finite representation type if and only if so is $\CS(\CM)$.

Now since $\La$ is  of finite representation type, obviously every subcategory of $\mmod \La$ is of finite type. In particular,  $\overleftarrow{\CM}^{\leqslant n}$ is of finite type. Therefore we deduce from Corollary \ref{equiv-Phi} that $\CS(\CM)$ is of finite representation type.
Hence we get the result.
\end{proof}

As it is mentioned in Remark  \ref{Exact Structure},  $\mmod \La$ admits an exact structure whose conflations are all proper $\M$-exact sequences. Let $F_{\M}$ denotes the associated sub-bifunctor of the bifunctor $\Ext^1_{\La}( - , - )$. Let  ${\SP}(F_{\M})$ denote the subcategory of all relative projective modules in $\mmod\La$ with respect to this exact structure. Moreover, we denote by $\SP(\M)$ the full subcategory of $\mmod\M$ consisting of all of its projective objects.

\begin{proposition}\label{Proposition 2.10}
Let $n \geq 2$. Let $\M$ be an $n$-cluster tilting subcategory of $\mmod \La$. Then, there exist equivalences of categories
\[\mmod\underline{\M} \simeq \underline{\widetilde{\SP}^{\leqslant 1}(\M)} \simeq \underline{\ola{\M}^{\leqslant 1}}_{F_{\M}},\]
where $\underline{\widetilde{\SP}^{\leqslant 1}(\M)}$ is the stable category of the subcategory $\widetilde{\SP}^{\leqslant 1}(\M)$ of $\mmod \M$ in the usual sense, and $\underline{\ola{\M}^{\leqslant 1}}_{F_{\M}}$ is the stable category of the subcategory  $\ola{\M}^{\leqslant 1}$ of $\mmod \La$ with respect to the exact structure induced by the sub-bifunctor $F_{\M}$ of $\Ext^1_{\La}( - , - )$.	
\end{proposition}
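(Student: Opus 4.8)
The plan is to obtain both equivalences as formal consequences of the results already proved, the key observation being that the quotient categories of $\CS(\M)$ appearing in Corollaries \ref{equiv-Phi} and \ref{equiv-Psi} become, after suitable identifications, exactly the stable categories in the statement. I would treat the two equivalences separately: the right-hand one is essentially immediate from Proposition \ref{Propop2.2} and Remark \ref{Exact Structure}, while the left-hand one needs a short manipulation with quotient ideals using the functors $\Up$ and $\Psi$.

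For $\underline{\widetilde{\SP}^{\leqslant 1}(\M)}\simeq\underline{\ola{\M}^{\leqslant 1}}_{F_{\M}}$, recall from Remark \ref{Exact Structure} that $Y_{\M}|$ is an equivalence of exact categories $\ola{\M}^{\leqslant 1}\st{\simeq}\lrt\widetilde{\SP}^{\leqslant 1}(\M)$, the left-hand side carrying the exact structure induced by $F_{\M}$. Both sides have enough projectives: for $X\in\ola{\M}^{\leqslant 1}$ an $\M$-proper resolution $0\lrt M_1\lrt M_0\lrt X\lrt 0$ with $M_i\in\M$ is a conflation whose deflation has source in $\M$, and dually $0\lrt\M(-,A)\lrt\M(-,B)\lrt F\lrt 0$ realizes $F\in\widetilde{\SP}^{\leqslant 1}(\M)$ as a quotient of a projective functor. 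One checks that the $F_{\M}$-relative projectives of $\ola{\M}^{\leqslant 1}$ are precisely the objects of $\M$ — indeed $\SP(F_{\M})=\M$, since an $\M$-proper epimorphism onto an object of $\M$ splits and, conversely, any relative projective is a summand of a right $\M$-approximation and $\M$ is idempotent complete — while the projectives of $\widetilde{\SP}^{\leqslant 1}(\M)$ are exactly the objects of $\SP(\M)$. Since an exact equivalence carries projectives to projectives, $Y_{\M}|$ descends to an equivalence of the associated stable categories, which is the asserted one.

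For $\mmod\underline{\M}\simeq\underline{\widetilde{\SP}^{\leqslant 1}(\M)}$, first note that every object of $\CK$ is isomorphic in $\CS(\M)$ to one of the form $(M\st{1}\lrt M)$, so $\lan\CK\ran\subseteq\lan\CV\ran$ and the projection $\CS(\M)\lrt\CS(\M)/\CV$ factors through $\CS(\M)/\CK$. By Corollary \ref{equiv-Phi}, $\Up$ induces an equivalence $\overline{\Up}\colon\CS(\M)/\CK\st{\simeq}\lrt\widetilde{\SP}^{\leqslant 1}(\M)$, which (via lifting of morphisms along the equivalence) carries the ideal $\lan\CV\ran/\lan\CK\ran$ onto the ideal of $\widetilde{\SP}^{\leqslant 1}(\M)$ generated by the objects $\Up(V)$, $V\in\CV$. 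Since $\Up(M\st{1}\lrt M)=0$ and $\Up(0\lrt M)=\M(-,M)$, that ideal is exactly the ideal of morphisms factoring through an object of $\SP(\M)$, i.e. through a projective; hence $\CS(\M)/\CV\simeq\widetilde{\SP}^{\leqslant 1}(\M)/\lan\SP(\M)\ran=\underline{\widetilde{\SP}^{\leqslant 1}(\M)}$, and composing with the equivalence of Corollary \ref{equiv-Psi} completes the chain. The only point requiring real care is this last identification of quotient ideals, together with the (routine) verification that $\widetilde{\SP}^{\leqslant 1}(\M)$ and $\ola{\M}^{\leqslant 1}$ have enough projectives, so that passing to the quotient by the ideal of projective-factoring maps genuinely yields the stable category; beyond that the proposition is formal.
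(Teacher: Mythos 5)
Your argument is correct and follows essentially the same route as the paper: the first equivalence is obtained from the fact that $\Up$ is full, dense and objective and sends the objects of $\CV$ to projectives (your passage through $\CS(\M)/\CK$ and the transfer of ideals is just a more explicit version of this), combined with Corollary \ref{equiv-Psi}; the second comes from the exact equivalence $Y_{\M}|$ of Proposition \ref{Propop2.2}, which identifies $\SP(F_{\M})$ with $\SP(\M)$ and hence descends to the stable categories. Your explicit verification that $\SP(F_{\M})=\M$ and that both exact categories have enough projectives fills in details the paper leaves implicit, but the underlying proof is the same.
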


\begin{proof}
By Theorem \ref{Proposition 2.3}, the functor $\Up: \CS(\M) \lrt \widetilde{\SP}^{\leqslant 1}(\M)$ is full, dense and objective. Moreover, it sends objects of $\CV$ to projective objects of $\mmod \M$. Hence induces an equivalence
\[\CS(\M)/\CV \simeq \underline{\widetilde{\SP}^{\leqslant 1}(\M)}.\]
So the first equivalence follows from the Corollary \ref{equiv-Psi}. The second equivalence follows from Proposition \ref{Propop2.2} in view of the fact that the functor $Y_{\M}|$ also provides an equivalence between $\SP(\M)$ and ${\SP}(F_{\M}).$
\end{proof}

We have the following immediate corollary for $n=2$.

\begin{corollary}\label{corollary-2}
Let $\M$ be a $2$-cluster tilting subcategory of $\mmod\La$. Then there is the following equivalences of categories
\[\mmod\underline{\M} \simeq \underline{\widetilde{\SP}^{\leqslant 1}(\M)} \simeq \underline{\mmod \La}_{F_{\M}}.\]
\end{corollary}

\begin{proof}
Let $X \in \mmod\La$ and consider a right $\M$-approximation $M \st{\alpha}\twoheadrightarrow X$ of $X$. Since $\M$ is a $2$-cluster tilting subcategory of $\mmod\La$, $\Ext^1_{\La}(\M, \M)=0$. This implies that $\Ext^1_{\La}(\M, \Ker\alpha)=0.$ Hence $\Ker\alpha \in \M$ and so $X \in \ola{\M}^{\leqslant 1}$. Therefore  $\mmod\La=\ola{\M}^{\leqslant 1}$. Now the result follows from the above proposition.
\end{proof}

\section{Comparison}\label{Comp}
In this section we compare the functors $\Psi$ and $\Phi$. Such comparison is inspired by \cite[Theorem 2]{RZ} and \cite[Theorem 4.2]{E}. Throughout assume that $\La$ is a self-injective artin algebra.

Let $\Omega_{\underline{\M}}:\unmod\underline{\M} \lrt \unmod\underline{\M}$ denote the syzygy functor. Note that since $\mmod \underline{\M}$ is semi-perfect, we can assume that $\Omega_{\underline{\M}}(F)$ is the kernel of a projective cover of $F$ in $\mmod\underline{\M}$. Let $\CW$ denote the smallest additive subcategory of $\CS(\M)$ generated by all objects of the form $M\st{1}\lrt M, \ 0 \lrt M$ and $M\lrt P$, with $P \in \prj\La$.

\s There exists an induced functor $\CS(\M)/\CW \lrt \unmod\underline{\M}$ that will be denoted by $\underline{\Phi}$. To see this, note that by definition
$\Phi(M \st{1_M}\lrt M)=0$. Set $F=\Coker(\M( - , M) \lrt \M( - , P))$. So $\va(F)=\Coker(M \lrt P)$. Consider a projective presentation of $\va(F)$ with $P$ as the zero's term to get the following commutative diagram
\[\xymatrix{ ( - , Q) \ar[r] \ar[d] & ( - , P) \ar[r] \ar@{=}[d] & \va_{\la}\va(F) \ar[r] \ar[d] & 0 \\ ( - , M) \ar[r] & ( - , P) \ar[r] & F \ar[r] & 0 }\]
Now Snake lemma implies that $\Phi(M \lrt P) =\Coker(\va_{\la}\va(F) \lrt F) = 0.$
Finally, we show that $\Phi(0 \lrt M)=\underline{\M}( - , M)$. Clearly $\Up(0 \lrt M)=\M( - , M)$ and $\va( - , M)=M$. So if we let $Q \lrt P \lrt M \lrt 0$ be a projective presentation of $M$, the claim follows from the following commutative diagram
\[\xymatrix{\M( - , Q) \ar[r] \ar[d] & \M( - , P) \ar[r] \ar[d] & \va_{\la}\va( - , M) \ar[r] \ar[d] & 0 \\ 0 \ar[r] \ar[d] & \M( - , M) \ar[r] \ar[d] & \M( - , M) \ar[r] \ar[d] & 0\\
0 \ar[r] \ar[d] & \underline{\M}( - , M) \ar[r]^{\simeq} \ar[d] & i_{\la}( - , M) \ar[r] \ar[d] & 0\\ 0 & 0 & 0}\]

\s The notion of $n\Z$-cluster tilting subcategories is introduced by Iyama and Jasso \cite{IJ}, as subcategories that are closed under $n$-syzygies and $n$-cosyzygies and so are ``better behaved from the viewpoint of higher homological algebra''. Recall that an $n$-abelian category $\M$ has $n$-cosyzygies if for every $M \in \M$ there exists an $n$-exact sequence
\[0\lrt  M \lrt  I^1 \lrt \cdots \lrt I^n \lrt L \lrt 0\]
such that $I^i$ is injective, for $i \in \{1,  \cdots, n\}$, and $L \in \M$ \cite[Definition 2.22]{IJ}. By abuse of notation, we say that $L$ is the $n$-cosyzygy of $M$ and denote it by $\Om^{-n}_{\La}M$. The notion of $n$-syzygies is defined dually. We denote the $n$-syzygy of $M$ by $\Om^{-n}_{\La}M$.

We say that an $n$-cluster tilting subcategory $\M$ of $\mmod\La$ is $n\Z$-cluster tilting if it admits $n$-syzygies, i.e. $\Om^n(\M) \subseteq \M$ or equivalently, if it admits $n$-cosyzygies, i.e. $\Om^{-n}(\M) \subset \M.$ See \cite[Definition-Proposition  2.15]{IJ} for more equivalent statements.

Now assume that $\M$ is an $n\Z$-cluster tilting subcategory. Then, by definition, $\Psi(0 \lrt M) = 0$, $\Psi(M \lrt M)=0$ and, using the fact that $\La$ is self-injective, $\Psi(M \lrt P)= \underline{\M}( - , \Omega^{-n}_{\La}(M))$. Hence in this case also we have the induced functor
$\underline{\Psi}: \CS(\M)/\CW \lrt \unmod\underline{\M}$.

Now we are in a position to prove the main result of this subsection.

\begin{theorem}\label{Th-Comparison}
Let $\La$ be a self-injective artin algebra and $\M$ be an $n\Z$-cluster tilting subcategory of $\mmod \La.$ Then, with the above notations, we have
\[{\underline{\Phi}} = {\Omega^n_{\underline{\M}}} \circ {\underline{\Psi}}.\]
That is, the functors ${\underline{\Phi}}$ and ${\underline{\Psi}}$ differ by the $n$-syzygy functor on $\unmod\underline{\M}$.
\end{theorem}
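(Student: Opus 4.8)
The plan is to compute both functors $\underline{\Phi}$ and $\underline{\Psi}$ on an arbitrary object $f\colon M_1\to M_2$ of $\CS(\M)$ and compare the resulting functors in $\unmod\underline{\M}$ up to the $n$-syzygy. The starting point is the $n$-exact sequence coming from an $n$-cokernel of $f$,
\[0\lrt M_1\st{f}\lrt M_2\st{d^1}\lrt M^1\lrt\cdots\lrt M^{n-1}\st{d^n}\lrt M^n\lrt 0,\]
which by definition of $\Psi$ yields the long exact sequence
\[0\rt \M(-,M_1)\rt\M(-,M_2)\rt\M(-,M^1)\rt\cdots\rt\M(-,M^n)\rt F\rt 0\]
with $F=\Psi(f)$. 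On the other hand, $\Up(f)=\Coker(\M(-,M_1)\to\M(-,M_2))=:G$, so there is a short exact sequence $0\to G\to \M(-,M^1)\to\M(-,M^2)\to\cdots\to\M(-,M^n)\to F\to 0$; this exhibits $G$, up to projective summands, as an $(n-1)$-st syzygy of $F$ inside $\mmod\M$. Then $\Phi(f)=i_\la(G)$, and I would identify $\Omega^{n-1}_{\underline\M}$ on $F$ using exactly this resolution.

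**Key steps.** First I would make precise the relation between the ordinary syzygy in the abelian category $\mmod\M$ and the syzygy $\Omega_{\underline\M}$ in $\unmod\underline{\M}\simeq\mmodd\M$: applying the exact functor $i_\la$ (more precisely, composing with $q$, passing to the stable category) sends the above resolution of $F$ by finitely generated projectives $\M(-,M^i)$ to a computation of $\Omega^n_{\underline\M}(F)$, because $i_\la$ of a representable $\M(-,M)$ is $\underline{\M}(-,M)$, which is projective in $\mmod\underline{\M}$, and the connecting argument in Subsection~\ref{ilambda} (via $\va$, $\va_\la$ and the counit $\eta$) shows $i_\la(G)=\Coker(\va_\la\va(G)\to G)$ sits $n$ steps below $F$. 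Concretely, break the length-$(n{+}1)$ exact sequence into short exact sequences $0\to Z^{i}\to\M(-,M^{i-1})\to Z^{i-1}\to 0$, apply $i_\la$ (which is right exact, being a left adjoint composed with a quotient), and use that $i_\la\M(-,M^{i})=\underline{\M}(-,M^i)$ is projective to conclude $i_\la(Z^{i})=\Omega^{i}_{\underline\M}(F)$ in the stable category; at $i=n$ one gets $i_\la(Z^n)=i_\la(G)=\Omega^n_{\underline\M}(F)$, provided $G\simeq Z^n$ up to summands on which $i_\la$ vanishes, i.e.\ up to representables — which is where the hypothesis $\M$ is $n\Z$-cluster tilting enters, guaranteeing that the syzygy object $M_1$ (the kernel) again lies in $\M$ so the truncation is clean. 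Having done this for a single object, I would check naturality: a morphism $(\alpha_1,\alpha_2)$ of objects of $\CS(\M)$ lifts to a morphism of $n$-exact sequences (Comparison Lemma \cite[Lemma~2.1]{J}), hence to a morphism of the two resolutions above, hence the identification $\underline\Phi\cong\Omega^n_{\underline\M}\circ\underline\Psi$ is a natural isomorphism of functors on $\CS(\M)/\CW$; one must also verify both sides kill $\CW$, which was already arranged in the two preceding \textbf{\S}'s.

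**Main obstacle.** The delicate point is the bookkeeping of syzygies across the equivalence $\mmod\underline{\M}\simeq\mmodd\M$ and the fact that $i_\la$, while right exact, is not exact, so one cannot naively say ``$i_\la$ of a projective resolution is a projective resolution.'' The resolution by representables $\M(-,M^i)$ is a genuine projective resolution in $\mmod\M$, but its image under $i_\la$ computes $\Omega_{\underline\M}$ only because each $\underline{\M}(-,M^i)=i_\la\M(-,M^i)$ is still projective in $\mmod\underline{\M}$ and because the self-injectivity of $\La$ together with the $n\Z$-condition forces the relevant kernels/cokernels — in particular $\va_\la\va(G)$ and the kernel objects of $i_\la$ — to be precisely the representable (= projective) ones, so that the stable image is unchanged. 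Managing the minimal-vs-arbitrary projective presentation discrepancy (the definition of $\Omega_{\underline\M}$ uses projective covers, valid since $\mmod\underline\M$ is semiperfect) against the non-minimal resolutions produced by the $n$-exact sequence is the part requiring care; everything else is diagram-chasing with Yoneda and the Snake/Comparison lemmas already invoked above.
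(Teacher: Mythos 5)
Your overall computational skeleton agrees with the paper's: both reduce the theorem to showing that, for $f\colon M_1\to M_2$ with $n$-cokernel $M^1,\dots,M^n$, the induced sequence $0\to \underline{\Phi}(f)\to \underline{\M}(-,M^1)\to\cdots\to\underline{\M}(-,M^n)\to \underline{\Psi}(f)\to 0$ is exact in $\mmod\underline{\M}$, whence $\underline{\Phi}(f)\cong\Omega^n_{\underline{\M}}(\underline{\Psi}(f))$ because the middle terms are projective. The genuine gap is in how you obtain this exactness. Your inductive step --- split the resolution in $\mmod\M$ into short exact sequences $0\to Z^i\to\M(-,M^{i-1})\to Z^{i-1}\to 0$, apply the right exact $i_\la$, and ``conclude $i_\la(Z^i)=\Omega^i_{\underline{\M}}(F)$'' --- does not follow: right exactness only yields exactness of $i_\la (Z^i)\to\underline{\M}(-,M^{i-1})\to i_\la (Z^{i-1})\to 0$, so the syzygy $\Omega_{\underline{\M}}(i_\la (Z^{i-1}))$ is identified with the \emph{image} of $i_\la(Z^i)$, i.e.\ with a quotient of $i_\la(Z^i)$, not with $i_\la(Z^i)$ itself. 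The injectivity of $i_\la(Z^i)\to\underline{\M}(-,M^{i-1})$ (equivalently, exactness of the sequence of stable Hom functors $\underline{\M}(X,M_1)\to\underline{\M}(X,M_2)\to\underline{\M}(X,M^1)\to\cdots$ for $X\in\M$) is precisely the nontrivial content of the theorem, and your proposed fix --- that self-injectivity plus the $n\Z$-condition force the relevant kernels/cokernels, in particular $\va_{\la}\va(G)$ and the kernel objects of $i_\la$, to be representable --- is not correct as stated ($\va_{\la}\va(G)$ is in general only presented by representables at projectives, and the kernel of $i_\la$ is larger than that) and in any case does not supply the missing left-exactness.

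The paper fills exactly this hole by a different device: since $\La$ is self-injective and $\M$ is $n\Z$-cluster tilting, $\underline{\M}$ is $(n+2)$-angulated by \cite[Theorem 1]{GKO}, the $n$-exact sequence induces an $(n+2)$-angle in $\underline{\M}$, and \cite[Proposition 2.5]{GKO} provides the long exact sequence of representable functors on $\underline{\M}$ from which the displayed exact sequence, hence the syzygy identification, is read off. So the hypotheses enter through the angulated structure, not through properties of the kernel of $i_\la$; note also that $M_1\in\M$ is automatic because $f\in\CS(\M)$, while the $n\Z$-condition is really needed for the angulation and for $\underline{\Psi}$ to kill $\CW$ (since $\Psi(M\to P)=\underline{\M}(-,\Omega^{-n}_{\La}(M))$ requires $\Omega^{-n}_{\La}(M)\in\M$). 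If you wish to avoid citing GKO, you must prove the stable exactness directly (a diagram chase using that projectives are injective over a self-injective algebra together with the $\Hom$-exactness properties of $n$-exact sequences); as written, your argument assumes it.
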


\begin{proof}
For every $X, Y \in \mmod \La$ and each $i>0$, there exists a functorial isomorphism
\[\underline{\Hom}_{\La}(\Omega^i(X), Y) \simeq \Ext^i_{\La}(X, Y).\]
Using these isomorphisms, one can show that $\underline{\M}$ is an $n$-cluster tilting subcategory of the triangulated category $\underline{\rm{mod}}\mbox{-}\La$. On the other hand, since $\M$ is an $n\Z$-cluster tilting subcategory, it is closed under $n$-cosyzyies and hence by \cite[Theorem 1]{GKO} its stable category $\underline{\M}$ is an $(n+2)$-angulated category. Moreover, any $n$-exact sequence in $\M$ induces an $(n+2)$-angle in $\underline{\M}$.
Let $(M_1 \st{f}\rt M_2)$ be an object in $\CS(\M)$. It completes to an $n$-exact sequence
\[0 \lrt M_1 \st{f}{\lrt} M_2 \st{d^1}{\lrt} M^1 \lrt \cdots M^{n-1} \st{d^{n}}{\lrt} M^n \lrt 0.\]
So, by the structure of the $n$-angulated categories, it induces the following $(n+2)$-angle
\[\underline{M}_1 \st{\underline{f}}{\lrt} \underline{M}_2 \st{\underline{d}^1}{\lrt} \underline{M}^1 \lrt \cdots \underline{M}^{n-1} \st{\underline{d}^n}{\lrt} \underline{M}^n \lrt \Omega^{-n}_{\La}(\underline{M}_1)\]
in $\underline{\M}$. Thanks to \cite[Proposition 2.5]{GKO}, the above $(n+2)$-angle induces a long exact sequence of functors in $\mmod\underline{\M}$
\[\cdots \rt ( - , \Omega^n_{\La}(\underline{M}^n)) \rt ( - , \underline{M}^1) \rt \cdots \rt ( - , \underline{M}^n) \rt ( - , \Omega^{-n}_{\La}(\underline{M}_1)) \rt \cdots\]
Note that, for brevity, in the above sequence we have used $( - , \underline{M})$ instead of $\underline{\M}( - , \underline{M})$. Set $\underline{F}=\underline{\Psi}(M_1 \st{f}\rt M_2)$ and $\underline{G}=\underline{\Phi}(M_1 \st{f}\rt M_2)$. By definitions of $\Psi$ and $\Phi$, one can see that $\underline{F}$ is just the cokernel of the morphism
$\underline{\M}( - , \underline{M}^{n-1}) \lrt \underline{\M}( - , \underline{M}^n)$ appeared in the above long exact sequence while $G$ is the cokernel of the morphism  $\underline{\M}( - , \underline{M}_1) \st{\underline{\M}( - , \underline{f})} \lrt \underline{\M}( - , \underline{M}_1)$ appeared in the same exact sequence. Hence, in view of this long exact sequence, we get the exact sequence
\[0 \rt \underline{G} \rt \underline{\M}( - , \underline{M}^1) \rt \cdots \underline{\M}( - , \underline{M}^{n-1}) \rt \underline{\M}( - , \underline{M}^n) \rt \underline{F} \rt 0.\]
 in $\mmod \underline{\M}$, that completes the proof.	
\end{proof}

\section{Dual statements on $\CF(\M)$}\label{Sec-Dual}
In order to provide some applications, we need to have dual of the results we had so far. Since almost all of the proofs are similar, or rather dual, we just summarize the dual statements without proof.

\subsection{The functor $\Phi': \CF(\M) \lrt \overline{\M}\mbox{-}{\rm{mod}} $} \label{Phi'}
To define $\Phi'$, similar to what we did in Section \ref{Phi}, we need to define functors $\Up'$ and $i'_{\la}$.

\sss {\sc The functor $\Up': \CF(\M) \lrt \widetilde{\SP}^{\leqslant 1}(\M^{\op})$.}\label{Upsilon'}

The functor $Y'_{\M}:\mmod\La \lrt \M\mbox{-}\rm{mod}$ defined by $X \mapsto \Hom_{\La}(X, - )| _{\M}$ is full and faithful. In addition, the restricted functor $Y'_{\M}|:\ora{\M}^{\leqslant 1} \lrt \widetilde{\SP}^{\leqslant 1}(\M^{\op})$ is a duality. Compare Proposition \ref{Propop2.2}.

\begin{stheorem} (Compare Theorem \ref{Proposition 2.3} and Corollary \ref{equiv-Phi}). \label{Th-Up'}
The contravariant functor $\Up':\CF(\M) \lrt \widetilde{\SP}^{\leqslant 1}(\M^{\op})$ defined by the composition
\[\xymatrix@C=1.4cm{\Up': \CF(\M) \ar[r]^{K} & \ora{\M}^{\leqslant 1}  \ar[r]^{Y'_{\M}|} & \widetilde{\SP}^{\leqslant 1}(\M^{\op}) }\]
which maps $(M_1\st{f}{\rt} M_2)$ to $\Coker (\M(M_2, - )\st{(f , - )}\lrt \M(M_1, - ))$ is full, dense and objective. In particular,  there is an equivalence of additive categories
\[{\CF(\M)}/{\CK'}\simeq {(\widetilde{\SP}^{\leqslant 1}(\M^{\op}))}^{\op}\simeq \ora{\M}^{\leqslant 1},\]
where $\CK'$ is the subcategory of $\CF(\M)$ generated by all isomorphisms in $\CF(\M)$.
\end{stheorem}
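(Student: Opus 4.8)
The plan is to prove this by dualizing the proof of Theorem \ref{Proposition 2.3} together with Corollary \ref{equiv-Phi}, along the factorization $\Up'=Y'_{\M}|\circ K$ displayed in the statement. The paragraph preceding the theorem already records that $Y'_{\M}|\colon\ora{\M}^{\leqslant 1}\to\widetilde{\SP}^{\leqslant 1}(\M^{\op})$ is a duality, that is, an equivalence $\ora{\M}^{\leqslant 1}\xrightarrow{\ \simeq\ }\bigl(\widetilde{\SP}^{\leqslant 1}(\M^{\op})\bigr)^{\op}$; in particular it is full, faithful (hence objective) and dense. Since, by the last remark of Subsection \ref{Objective}, a composite of full, dense and objective functors is again full, dense and objective, the whole theorem reduces to showing that the kernel functor
\[
K\colon\CF(\M)\lrt\ora{\M}^{\leqslant 1},\qquad (M_1\st{f}\rt M_2)\mapsto\Ker f,
\]
is well defined, full, dense and objective, with kernel objects precisely the isomorphisms of $\CF(\M)$.

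Well-definedness and density are the easy steps. If $f\colon M_1\twoheadrightarrow M_2$ has $M_1,M_2\in\M$, then $0\to\Ker f\to M_1\to M_2\to 0$ is exact, and since $n>1$ forces $\Ext^1_{\La}(M_2,\M)=0$ it is $\Hom_{\La}(-,\M)$-exact, so it witnesses $\Ker f\in\ora{\M}^{\leqslant 1}$. Conversely, given $X\in\ora{\M}^{\leqslant 1}$, a coproper coresolution $0\to X\to M^0\to M^1\to 0$ provides an epimorphism $M^0\twoheadrightarrow M^1$ in $\CF(\M)$ with kernel $X$, proving density.

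For fullness, take epimorphisms $f\colon M_1\rt M_2$ and $f'\colon M'_1\rt M'_2$ in $\CF(\M)$ and a morphism $\beta\colon\Ker f\to\Ker f'$; composing $\beta$ with the inclusion $\Ker f'\hookrightarrow M'_1$ and using that $0\to\Ker f\to M_1\to M_2\to 0$ is $\Hom_{\La}(-,M'_1)$-exact (again because $n>1$) with $M'_1\in\M$, extend it to $\beta_1\colon M_1\to M'_1$. Then $f'\beta_1$ kills $\Ker f$, hence factors as $\beta_2 f$ for a unique $\beta_2\colon M_2\to M'_2$, and $(\beta_1,\beta_2)$ is a morphism of $\CF(\M)$ with $K(\beta_1,\beta_2)=\beta$. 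For objectivity, let $(\beta_1,\beta_2)\colon f\to f'$ satisfy $K(\beta_1,\beta_2)=0$; then $\beta_1$ vanishes on $\Ker f$, so $\beta_1=sf$ for some $s\colon M_2\to M'_1$, and from $\beta_2 f=f'\beta_1=f'sf$ and $f$ epi we get $\beta_2=f's$. Hence $(\beta_1,\beta_2)$ factors as $f\st{(f,\,1_{M_2})}\lrt(M_2\st{1}\rt M_2)\st{(s,\,f's)}\lrt f'$, whose middle term is an isomorphism of $\CF(\M)$ and so a kernel object of $K$; conversely any kernel object of $K$ is an epimorphism with zero kernel, i.e. an isomorphism of $\CF(\M)$. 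Thus the kernel of $K$ is generated by $\CK'$.

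Putting these together, $\Up'$ is full, dense and objective with kernel generated by $\CK'$, so Subsection \ref{Objective} furnishes the induced equivalence $\CF(\M)/\CK'\xrightarrow{\ \simeq\ }\bigl(\widetilde{\SP}^{\leqslant 1}(\M^{\op})\bigr)^{\op}$; composing with the duality $Y'_{\M}|$ then yields the remaining equivalence with $\ora{\M}^{\leqslant 1}$, completing the stated chain. The only delicate point I anticipate is \emph{the bookkeeping with contravariance}: making sure that ``full, dense, objective'' transports correctly through the duality $Y'_{\M}|$ and that the induced functor lands in the opposite category as claimed --- all the homological input is the exact mirror of Section \ref{Phi}. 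It is worth noting that no separate $\M$-properness hypothesis is invoked anywhere, since $n>1$ makes every length-one (co)resolution by objects of $\M$ automatically (co)proper.
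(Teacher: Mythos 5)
Your proposal is correct and follows essentially the paper's intended route: the paper gives no separate proof of Theorem \ref{Th-Up'}, stating only that it is dual to Theorem \ref{Proposition 2.3} and Corollary \ref{equiv-Phi}, and your argument is precisely that dualization --- factoring $\Up'=Y'_{\M}|\circ K$, using fullness/faithfulness/density of $Y'_{\M}|$, and proving $K$ full, dense and objective with the null-homotopy factorization through $(M_2\st{1}\rt M_2)$ mirroring the paper's factorization through $(M'_1\st{1}\rt M'_1)$. The homological inputs (automatic coproperness of length-one sequences for $n>1$, kernel objects of $K$ being exactly the isomorphisms) are handled correctly, so nothing further is needed.
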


\subsection{The functor $i'_{\la}: \M\mbox{-}{\rm{mod}} \lrt \overline{\M}\mbox{-}{\rm{mod}} $} \label{ilambda'}
Let $\La$ be an artin algebra and $\M$ be an $n$-cluster tilting subcategory of $\mmod\La$. By \cite[Theorem 3.7]{AHK}, there exists a recollement
\[\xymatrix{\overline{\M}{\mbox{-}}{\rm{mod}} \ar[rr]^{i'}  && \M\mbox{-}{\rm mod} \ar[rr]^{\va'} \ar@/^1pc/[ll]^{i'_{\rho}} \ar@/_1pc/[ll]_{i'_{\la}} && (\mmod \La)^{\op} \ar@/^1pc/[ll]^{\va'_{\rho}} \ar@/_1pc/[ll]_{\va'_{\la}} }\]
of abelian categories, where $\overline{\M}{\mbox{-}}{\rm{mod}}=\Ker\va'$ is the full subcategory of $\CX{\mbox{-}\rm{mod}}$ consisting of all functors that vanish on injective modules.

Therefore similar to Subsection \ref{ilambda} one can define $i'_{\la}$ explicitly and then define the contravariant functor $\Phi'$ as the following composition
\[\Phi':  \CF(\M) \st{\Up'}\lrt  \widetilde{\SP}^{\leqslant 1}(\M^{\op}) \st{i'_{\la}|}{\lrt} \overline{\M}\mbox{-}{\rm{mod}}.\]
That is $\Phi' := i'_{\la}|\circ \Up'.$

\begin{stheorem} (Compare Theorem \ref{Phi-Theorem} and Corollary \ref{Phi-Corollary}). \label{Phi'-Theorem}
Let $\La$ be a self-injective artin algebra. The contravariant functor $\Phi': \CF(\M) \lrt \overline{\M}\mbox{-}{\rm{mod}}$ is full, dense and objective. In particular, there exists an equivalence of abelian categories
\[{\CF(\M)}/{\CU'} \simeq {(\overline{\M}\mbox{-}{\rm{mod}})}^{\op},\]
where $\CU'$ is the subcategory of ${\CF(\M)}$ generated by all objects of the form $(M \st{1}\lrt M)$ and $(I \st{f} \lrt M)$, where $M \in \M$ and $I \in \inj\La.$
\end{stheorem}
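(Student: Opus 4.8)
The plan is to obtain Theorem \ref{Phi'-Theorem} by dualizing Theorem \ref{Phi-Theorem} completely, rather than re-running the argument from scratch. The key observation is that the ordinary duality $D=\Hom_R(-,E)$ on $\mmod\La$ (where $E$ is the injective envelope of $R/\rad R$) carries $\mmod\La$ to $\mmod\La^{\op}$, sends the $n$-cluster tilting subcategory $\M$ of $\mmod\La$ to an $n$-cluster tilting subcategory $D\M$ of $\mmod\La^{\op}$, and interchanges $\CF(\M)$ with $\CS(D\M)$ (an epimorphism $M_1\rt M_2$ in $\M$ goes to a monomorphism $DM_2\rt DM_1$ in $D\M$). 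Moreover, since $\La$ is self-injective, so is $\La^{\op}$, so Theorem \ref{Phi-Theorem} applies verbatim to $\La^{\op}$ and $D\M$.

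First I would record the compatibility of $D$ with the constructions: the functor $Y'_{\M}$ becomes $Y_{D\M}\circ D$, the functor $\Up'$ becomes $\Up\circ$ (the induced equivalence $\CF(\M)\simeq\CS(D\M)^{\op}$) up to the natural identification $\widetilde{\SP}^{\leqslant 1}(\M^{\op})\simeq\widetilde{\SP}^{\leqslant 1}((D\M))$, and — crucially — the recollement $\SR(D\M,\La^{\op})$ of Subsection \ref{ilambda} is taken by the duality to the recollement of Subsection \ref{ilambda'}, so that $i'_{\la}$ corresponds to $i_{\la}$ for $\La^{\op}$. Concretely, $\overline{\M}\mbox{-}{\rm{mod}}$ (functors on $\M\mbox{-}{\rm{mod}}$ vanishing on injectives) is identified via $D$ with $\mmod\underline{D\M}$ (functors on $\mmod\,D\M$ vanishing on projectives), because $D$ sends injective $\La$-modules in $\M$ to projective $\La^{\op}$-modules in $D\M$. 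Under all these identifications, $\Phi'$ is literally $\Phi$ for the self-injective algebra $\La^{\op}$ and the $n$-cluster tilting subcategory $D\M$, precomposed with the duality $\CF(\M)\simeq\CS(D\M)^{\op}$.

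Once the dictionary is set up, the conclusion is immediate. Since $D$ is an (additive) anti-equivalence, it preserves and reflects the properties ``full'', ``dense'', and ``objective'' after the obvious variance adjustments; so $\Phi'$ being full, dense and objective follows from Theorem \ref{Phi-Theorem} applied to $\Phi$ over $\La^{\op}$. For the last assertion, Theorem \ref{Phi-Theorem}'s Corollary \ref{Phi-Corollary} gives $\CS(D\M)/\CU\simeq\mmod\underline{D\M}$ where $\CU$ is generated by objects $(N\st{1}\rt N)$ and $(N\st{g}\rt P)$ with $N\in D\M$, $P\in\prj\La^{\op}$; transporting back along $D$, the object $N\st{1}\rt N$ corresponds to $M\st{1}\rt M$ and $N\st{g}\rt P$ corresponds to $I\st{f}\rt M$ with $I=DP\in\inj\La$ and $M=DN\in\M$, so $\CU$ corresponds exactly to $\CU'$. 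The variance of the duality turns the equivalence $\CS(D\M)/\CU\simeq\mmod\underline{D\M}$ into $\CF(\M)/\CU'\simeq(\overline{\M}\mbox{-}{\rm{mod}})^{\op}$, as claimed.

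I expect the main obstacle to be purely bookkeeping: verifying carefully that the standard duality $D$ really does intertwine the two recollements of \cite[Theorem 3.7]{AHK}, i.e. that $D$ carries $\va_{\la},\va$ for $(D\M,\La^{\op})$ to $\va'_{\la},\va'$ for $(\M,\La)$ and hence the left adjoint $i_{\la}$ to $i'_{\la}$. This amounts to checking that $D$ commutes with taking cokernels of minimal projective presentations in the appropriate directions (turning projective covers into injective envelopes) and with the Yoneda functors $\M(-,-)$ versus $\M(-,-)^{\op}$ — all of which is routine but must be stated precisely so that the identification $\overline{\M}\mbox{-}{\rm{mod}}\simeq\mmod\underline{D\M}$ and the correspondence $i'_{\la}\leftrightarrow i_{\la}$ are unambiguous. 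Everything else is a formal consequence of Theorems \ref{Proposition 2.3}, \ref{Phi-Theorem} and their corollaries.
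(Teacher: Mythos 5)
Your proposal is correct, but it takes a different route from the one the paper intends. The paper gives no written proof of Theorem \ref{Phi'-Theorem}: Section \ref{Sec-Dual} simply asserts that the proofs are ``similar, or rather dual'', meaning the authors intend the reader to re-run the arguments of Subsections \ref{Upsilon}, \ref{ilambda} and \ref{Phi-Sub} with all arrows reversed (injective envelopes in place of projective covers, the dual recollement and $i'_{\la}$ in place of $i_{\la}$, covariant defect-style presentations in place of contravariant ones), staying over the algebra $\La$ throughout. You instead make the word ``dual'' literal: you transport Theorem \ref{Phi-Theorem} and Corollary \ref{Phi-Corollary}, applied to the self-injective algebra $\La^{\op}$ and the $n$-cluster tilting subcategory $D\M$, back along the artin-algebra duality $D$. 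This is a sound reduction: $D\M$ is indeed $n$-cluster tilting in $\mmod\La^{\op}$, $D$ interchanges $\CF(\M)$ with $\CS(D\M)$ and $\inj\La$ with $\prj\La^{\op}$, it identifies $\overline{\M}\mbox{-}{\rm mod}$ with $\mmod\underline{D\M}$, and it matches $\CU$ with $\CU'$, so fullness, density and objectivity of $\Phi'$ and the equivalence ${\CF(\M)}/{\CU'}\simeq(\overline{\M}\mbox{-}{\rm mod})^{\op}$ follow formally. What your route buys is that nothing from Section \ref{Phi} has to be reproved; what it costs is exactly the dictionary you flag, above all the verification that $D$ intertwines the recollement of \cite[Theorem 3.7]{AHK} for $(D\M,\La^{\op})$ with the dual recollement of Subsection \ref{ilambda'}, so that the transported $\Phi$ really is the functor $\Phi'=i'_{\la}|\circ\Up'$ as defined in the paper and not merely some full, dense, objective functor with the same target; this check is routine (it reduces to $D$ turning minimal projective presentations over $\La^{\op}$ into minimal injective copresentations over $\La$ and commuting with the Yoneda embeddings) but must be written out for the statement to be about the paper's $\Phi'$. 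The paper's implicit route avoids this bookkeeping at the price of repeating each argument in dual form.
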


\subsection{The functor $\Psi':  \CF(\M) \lrt \overline{\M}\mbox{-}{\rm{mod}}$} \label{Psi'}
In analog with the definition of the functor $\Psi$ at Section \ref{Psi}, we can define the contravariant functor $\Psi'$. Let us review the definition briefly.

Pick an epimorphism $M_1 \st{f}\rt M_2$ of $\CF(\M)$. By taking $n$-kernel in $\M$ we get the $n$-exact sequence
\[0 \lrt M^1\st{d^1}\lrt M^2\st{d^2}\lrt  \cdots \lrt M^n \st{d^n}\lrt M_1 \st{f}\lrt M_2 \lrt 0.\]
The $n$-exactness induces the exact sequence
\[0 \rt \M(M_2, - ) \rt \M(M_1, - ) \rt \cdots \rt \M(M^2, - ) \rt \M(M^1, - ) \rt F \rt 0.\]
Define $\Psi'(M_1 \st{f}\rt M_2):=F.$ Since the restriction of $F$ on injective modules is zero, $F$ is indeed an object of   $\overline{\M}\mbox{-}\rm{mod}$. The action of $\Psi'$ on morphisms defines naturally.

\begin{stheorem}(Compare Theorem \ref{Th-Psi} and Corollary \ref{equiv-Psi})\label{Theorem Psi'}
The contravariant functor $\Psi':\CF(\M) \lrt \overline{\M}\mbox{-}\rm{mod}$ is full, dense and objective. In particular, there exists an equivalence of abelian categories
\[{\CF(\M)}/{\CV'}  \simeq {(\overline{\M}\mbox{-}\rm{mod})}^{\op},\]
where $\CV'$ is the full subcategory of $\CF(\M)$ generated by all finite direct sums of objects of the form $(M\st{1}\rt M)$ and $(M\rt 0)$, where  $M$ runs over objects of $\M$.
\end{stheorem}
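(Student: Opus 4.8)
The plan is to transcribe, in dual form, the proofs of Theorem~\ref{Th-Psi} and Corollary~\ref{equiv-Psi}: throughout one interchanges monomorphisms with epimorphisms, $n$-cokernels with $n$-kernels, the covariant Yoneda functors $\M(-,M)$ with the contravariant ones $\M(M,-)$, and split monomorphisms with split epimorphisms. Since the companion functor $Y'_{\M}|$ of \S\ref{Upsilon'} is a duality rather than an equivalence, $\Psi'$ is genuinely contravariant; this accounts for the $\op$ in the target $(\overline{\M}\mbox{-}{\rm{mod}})^{\op}$, and keeping track of this variance is the only real bookkeeping point.

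\emph{Fullness and density.} Given epimorphisms $f\colon M_1\st{f}\rt M_2$ and $f'\colon M_1'\st{f'}\rt M_2'$ in $\CF(\M)$ with $\Psi'(f)=F$, $\Psi'(f')=F'$, and a morphism $\eta\colon F'\rt F$ in $\overline{\M}\mbox{-}{\rm{mod}}$, applying $\M(-,-)$ in the first variable to the $n$-exact sequences coming from the $n$-kernels of $f$ and $f'$ produces projective resolutions of $F$ and of $F'$ in $\M\mbox{-}{\rm mod}$. Lifting $\eta$ to a chain map of these resolutions and invoking Yoneda's lemma gives a morphism between the two $n$-exact sequences, whose first two components yield $(\alpha_1,\alpha_2)\colon f\rt f'$ with $\Psi'(\alpha_1,\alpha_2)=\eta$; the Comparison Lemma \cite[Lemma~2.1]{J} ensures this is well defined. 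For density, start from $F\in\overline{\M}\mbox{-}{\rm{mod}}$ and pick any projective presentation $\M(M^2,-)\rt\M(M^1,-)\rt F\rt 0$, which corresponds under Yoneda to a morphism $d^1\colon M^1\rt M^2$ in $\M$. Testing the presentation against an injective envelope $M^1\hookrightarrow I$ and using that $F$ vanishes on injectives shows that $M^1\hookrightarrow I$ factors through $d^1$, so $d^1$ is a monomorphism in $\M$; as $\M$ is $n$-abelian, $d^1$ completes to an $n$-exact sequence
\[
0\rt M^1\st{d^1}{\lrt} M^2\lrt\cdots\lrt M^n\lrt M_1\st{f}{\lrt} M_2\rt 0,
\]
and by construction $\Psi'(f)\cong F$.

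\emph{Objectivity and the equivalence.} Suppose $(\alpha_1,\alpha_2)\colon f\rt f'$ satisfies $\Psi'(\alpha_1,\alpha_2)=0$. Then the induced lift between the two projective resolutions is null-homotopic, and transporting the contracting homotopy back through Yoneda's lemma produces $\M$-morphisms dual to the $s^0,s^1$ occurring in the proof of Theorem~\ref{Th-Psi} which display $(\alpha_1,\alpha_2)$ as factoring through an object of $\CF(\M)$ of the form $M_1'\oplus M_2'\twoheadrightarrow M_2'$, a split epimorphism. By the dual of \cite[Proposition~2.6]{J} such an object is a kernel object of $\Psi'$, so $\Psi'$ is objective. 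Combining the three properties with the last paragraph of \S\ref{Objective}, it now suffices to check that the class of kernel objects of $\Psi'$ equals $\CV'$, for then $\Psi'$ induces the asserted equivalence $\CF(\M)/\CV'\st{\simeq}{\lrt}(\overline{\M}\mbox{-}{\rm{mod}})^{\op}$ (the $\op$ reflecting the contravariance).

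\emph{The kernel is $\CV'$.} Dualizing the computation in Corollary~\ref{equiv-Psi}: if $\Psi'(f)=0$ for $f\colon M_1\rt M_2$, the $n$-exact sequence $0\rt M^1\rt\cdots\rt M^n\rt M_1\st{f}\rt M_2\rt 0$ induces an exact sequence $0\rt\M(M_2,-)\rt\M(M_1,-)\rt\cdots\rt\M(M^1,-)\rt 0$ of functors; peeling off its short exact sequences one at a time, each is seen to be split by evaluating the remaining portion on an appropriate module, so in particular $0\rt\Ker f\rt M_1\st{f}\rt M_2\rt 0$ splits and $f$ is a split epimorphism, hence an object of $\CV'$. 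Conversely $\Psi'(M\st{1}\rt M)=0$ and, since $n>1$, also $\Psi'(M\rt 0)=0$ (the relevant $n$-kernel terms vanish), so $\CV'$ consists of kernel objects, completing the proof. The one place demanding genuine care beyond mechanical dualization is the density step: one must upgrade the formal presentation datum $d^1$ to an honest monomorphism in the $n$-abelian category $\M$ before the $n$-abelian axioms can be applied, which is exactly where the hypothesis $F\in\overline{\M}\mbox{-}{\rm{mod}}$ (rather than merely $F\in\M\mbox{-}{\rm mod}$) enters.
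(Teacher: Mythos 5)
Your proposal is correct and takes exactly the route the paper intends: Section \ref{Sec-Dual} gives no written proof, asserting that the arguments are dual to those of Theorem \ref{Th-Psi} and Corollary \ref{equiv-Psi}, and your transcription is that dualization, even supplying the one point left implicit (using an injective envelope of $M^1$ and $F(I)=0$ to see that the presenting morphism $d^1$ is a monomorphism, dual to the paper's unproved claim in Theorem \ref{Th-Psi} that one may choose a presentation by an epimorphism). One bookkeeping slip in the objectivity step: the null-homotopy, read through Yoneda, gives $u\colon M_2\rt M'_1$ and $v\colon M_1\rt M'^n$ with $\alpha_2=f'u$ and $\alpha_1=uf+d'^nv$, and these exhibit $(\alpha_1,\alpha_2)$ as factoring through $M_1\oplus M_2\st{[0\;1]}{\lrt}M_2\cong(M_1\rt 0)\oplus(M_2\st{1}{\rt}M_2)$ (equivalently through $(M'_1\st{1}{\rt}M'_1)\oplus(M_1\rt 0)$), not through $M'_1\oplus M'_2\twoheadrightarrow M'_2$ as you wrote, for which the homotopy data does not directly supply the required maps; since the corrected middle object again lies in $\CV'$, the argument is otherwise unaffected.
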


As a dual of Proposition \ref{Proposition 2.10} we have the following result.

\begin{sproposition}  \label{Prop 2.17}
Let $n \geq 2$. Let $\M$ be an $n$-cluster tilting subcategory of $\mmod \La$. Then, there exist equivalences of categories
\[\overline{\M}\mbox{-}\rm{mod} \simeq (\underline{\widetilde{\SP}^{\leqslant 1}(\M^{\rm{op}})})^{\rm{op}} \simeq \underline{\ora{\M}^{\leqslant 1}}_{F^{\M}},\]	
where $\underline{\ora{\M}^{\leqslant 1}}_{F^{\M}}$ is the stable category of the subcategory  $\ora{\M}^{\leqslant 1}$ of $\mmod \La$ with respect to the exact structure induced by the sub-bifunctor $F^{\M}$ of $\Ext^1_{\La}( - , - )$.	
\end{sproposition}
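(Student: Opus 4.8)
The plan is to dualise, step for step, the proof of Proposition~\ref{Proposition 2.10}, replacing $\Up$, $\Psi$, $Y_{\M}|$ and $\CV$ by their $\CF(\M)$-counterparts $\Up'$, $\Psi'$, $Y'_{\M}|$ and $\CV'$. By Theorem~\ref{Th-Up'} the contravariant functor $\Up':\CF(\M)\lrt\widetilde{\SP}^{\leqslant 1}(\M^{\op})$ is full, dense and objective, with kernel objects generated by the isomorphisms of $\CF(\M)$. The only genuinely new input is the value of $\Up'$ on the extra generators of $\CV'$: one has $\Up'(M\st{1}\rt M)=\Coker\bigl(\M(M,-)\st{(1,-)}\lrt\M(M,-)\bigr)=0$, while for the epimorphism $(M\rt 0)$ one gets $\Up'(M\rt 0)=\Coker\bigl(\M(0,-)\lrt\M(M,-)\bigr)=\M(M,-)$, a projective object of $\M\mbox{-}{\rm mod}$. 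Hence $\Up'$ carries $\CV'$ into the projectives of $\M\mbox{-}{\rm mod}$.

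Granting this, the argument of Proposition~\ref{Proposition 2.10} goes through: composing $\Up'$ with the stable quotient $\widetilde{\SP}^{\leqslant 1}(\M^{\op})\lrt\underline{\widetilde{\SP}^{\leqslant 1}(\M^{\op})}$ produces a full, dense, objective functor whose kernel objects are exactly $\add\CV'$ --- for the nonobvious inclusion, if $\Up'(M_1\st{f}\rt M_2)$ is projective then the short exact sequence $0\rt\M(M_2,-)\rt\M(M_1,-)\rt\Up'(f)\rt 0$ in $\M\mbox{-}{\rm mod}$ splits, so $(M_1\st{f}\rt M_2)$ is, by Yoneda and Krull--Schmidt, a direct sum of copies of $(M\st{1}\rt M)$ and $(M\rt 0)$. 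Consequently $\Up'$ induces a contravariant equivalence $\CF(\M)/\CV'\simeq\underline{\widetilde{\SP}^{\leqslant 1}(\M^{\op})}$, i.e. $\CF(\M)/\CV'\simeq\bigl(\underline{\widetilde{\SP}^{\leqslant 1}(\M^{\op})}\bigr)^{\op}$. Combining with Theorem~\ref{Theorem Psi'}, which gives $\CF(\M)/\CV'\simeq(\overline{\M}\mbox{-}{\rm mod})^{\op}$, yields the first displayed equivalence after the evident bookkeeping of opposite categories.

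For the second equivalence I would invoke the duality $Y'_{\M}|$ from Subsection~\ref{Upsilon'} (the dual of Proposition~\ref{Propop2.2}) between $\ora{\M}^{\leqslant 1}$ and $\widetilde{\SP}^{\leqslant 1}(\M^{\op})$, where $\ora{\M}^{\leqslant 1}$ carries the exact structure it inherits from $\mmod\La$ equipped with the coproper exact structure, i.e. the one given by the sub-bifunctor $F^{\M}$ of $\Ext^1_{\La}(-,-)$ cut out by the coproper $\M$-exact sequences, dually to Remark~\ref{Exact Structure}; the functor $Y'_{\M}|$ is exact for this structure. Since $Y'_{\M}|$ sends the subcategory $\M$ --- which is what one factors out of $\ora{\M}^{\leqslant 1}$ to form $\underline{\ora{\M}^{\leqslant 1}}_{F^{\M}}$ (the role played by $\M=\SP(F_{\M})$ in Proposition~\ref{Proposition 2.10}) --- precisely onto the representable, hence projective, objects of $\M\mbox{-}{\rm mod}$, passing to stable categories gives $\underline{\ora{\M}^{\leqslant 1}}_{F^{\M}}\simeq\bigl(\underline{\widetilde{\SP}^{\leqslant 1}(\M^{\op})}\bigr)^{\op}$, which is the second equivalence.

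The step I expect to be the main obstacle is not any single hard computation but the bookkeeping: keeping the $(-)^{\op}$ decorations (and the attendant swap of projectives and injectives under a duality) consistent as the three contravariant functors $\Up'$, $\Psi'$ and $Y'_{\M}|$ are composed, and pinning down the relative projective/injective objects of the $F^{\M}$-exact structure concretely enough both to see that $Y'_{\M}|$ carries them exactly onto the representable functors and to check that $\ora{\M}^{\leqslant 1}$ is extension-closed for that structure, so that the stable category in the statement is well defined. Beyond these points, the proof is a routine transcription of the material of Sections~\ref{Phi} and~\ref{Psi} from $\CS(\M)$ to $\CF(\M)$.
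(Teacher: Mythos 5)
Your proposal is correct and is precisely the paper's intended argument: Section \ref{Sec-Dual} gives no separate proof, presenting this proposition as the dual of Proposition \ref{Proposition 2.10}, and your dualization (the values of $\Up'$ on the generators of $\CV'$, the splitting argument identifying the kernel objects of the stabilized functor with $\add\CV'$, and the duality $Y'_{\M}|$ carrying $\M$, the relative injectives of $F^{\M}$, onto the representable projectives of $\M\mbox{-}{\rm mod}$) is exactly that dual argument. Concerning the bookkeeping you flag: carried out literally, your two intermediate equivalences combine to give $\overline{\M}\mbox{-}{\rm mod}\simeq\underline{\widetilde{\SP}^{\leqslant 1}(\M^{\op})}$ and $\underline{\ora{\M}^{\leqslant 1}}_{F^{\M}}\simeq(\underline{\widetilde{\SP}^{\leqslant 1}(\M^{\op})})^{\op}$, so the $(-)^{\op}$ decoration really belongs on the last term of the displayed chain rather than the middle one; this mismatch lies in the statement as printed (whose decorations are not simultaneously compatible with Theorems \ref{Th-Up'} and \ref{Theorem Psi'}), not in your argument.
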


We end this section by comparing the functors $\Phi'$ and $\Psi'$.

\begin{theorem} (Compare Theorem \ref{Th-Comparison}.)
Let $\La$ be a self-injective artin algebra and $\M$ be an $n\Z$-cluster tilting subcategory of $\mmod \La.$ Then, with the above notations, we have
 \[{\overline{\Phi'}} = \Omega^{-n}_{\overline{\M}} \circ {\overline{\Psi'}}.\]
That is, the functors $\overline{\Phi'}$ and $\overline{\Psi'}$ differ by the $n$-cosyzygy functor on $\overline{{\rm mod}}\mbox{-}\overline{\M}.$
\end{theorem}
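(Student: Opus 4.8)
The plan is to dualize the proof of Theorem \ref{Th-Comparison} in the same systematic way that the rest of Section \ref{Sec-Dual} has been carried out. I would first record the necessary dual setup: for a self-injective artin algebra $\La$ and an $n\Z$-cluster tilting subcategory $\M$, the stable category $\overline{\M}$ is an $n$-cluster tilting subcategory of the triangulated category $\underline{\rm{mod}}\mbox{-}\La$, and since $\M$ is closed under $n$-syzygies, $\overline{\M}$ carries an $(n+2)$-angulated structure by \cite[Theorem 1]{GKO}, with every $n$-exact sequence in $\M$ giving rise to an $(n+2)$-angle. I would also observe that $\overline{\Phi'}$ and $\overline{\Psi'}$ are well-defined on the quotient by $\CW'$ (the dual of $\CW$: direct sums of $M \to M$, $M \to 0$, and $I \to M$ with $I$ injective), exactly as the passage preceding Theorem \ref{Th-Comparison} shows $\underline{\Phi}$ and $\underline{\Psi}$ descend; here one uses that $\La$ self-injective forces $\Psi'(I \to M) = \overline{\M}(\Omega^n_{\La}(M), -)$ or rather its appropriate dual.

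The core of the argument mirrors the proof of Theorem \ref{Th-Comparison}. Starting from an epimorphism $(M_1 \st{f}\rt M_2)$ in $\CF(\M)$, take an $n$-kernel to get the $n$-exact sequence
\[0 \lrt M^1 \st{d^1}\lrt M^2 \lrt \cdots \lrt M^n \st{d^n}\lrt M_1 \st{f}\lrt M_2 \lrt 0,\]
which yields the $(n+2)$-angle
\[\Omega^{n}_{\La}(\overline{M}_2) \lrt \overline{M}^1 \lrt \cdots \lrt \overline{M}^n \lrt \overline{M}_1 \st{\overline{f}}\lrt \overline{M}_2\]
in $\overline{\M}$. Applying the dual of \cite[Proposition 2.5]{GKO} to this $(n+2)$-angle produces a long exact sequence of representable functors in $\overline{\M}\mbox{-}{\rm{mod}}$; then $\overline{\Psi'}(f)$ is identified with the cokernel of $\overline{\M}(M_1, -) \to \overline{\M}(M^1, -)$ appearing in it, while $\overline{\Phi'}(f)$ is the cokernel of $\overline{\M}(M_1,-)\st{(\overline{f},-)}\lrt \overline{\M}(M_2,-)$ in the same sequence. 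Splicing the long exact sequence gives
\[0 \rt \overline{\Psi'}(f) \rt \overline{\M}(M^n, -) \rt \cdots \rt \overline{\M}(M^1,-) \rt \Omega^{-n}_{\overline{\M}}(\overline{\Psi'}(f))\text{-part} \rt \overline{\Phi'}(f)\rt 0,\]
exhibiting $\overline{\Phi'}(f)$ as an $n$-cosyzygy of $\overline{\Psi'}(f)$ in $\overline{{\rm mod}}\mbox{-}\overline{\M}$, hence $\overline{\Phi'} = \Omega^{-n}_{\overline{\M}} \circ \overline{\Psi'}$.

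The main obstacle I anticipate is purely bookkeeping rather than conceptual: keeping the variance straight. Since $\Phi'$ and $\Psi'$ are \emph{contravariant} and land in $\overline{\M}\mbox{-}{\rm{mod}}$ (left modules), the representable functors, the direction of the induced long exact sequence, and the roles of syzygy versus cosyzygy all get reversed relative to Theorem \ref{Th-Comparison}; one must check that the $n$-syzygy functor in the covariant picture becomes the $n$-cosyzygy functor $\Omega^{-n}_{\overline{\M}}$ on $\overline{{\rm mod}}\mbox{-}\overline{\M}$ here, which is exactly where self-injectivity of $\La$ (giving $\Omega^{-n}$ its meaning via $n$-cosyzygies, and the Nakayama-type duality identifying syzygies of stable $\Hom$) enters. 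A second point requiring care is confirming that the identification of $\overline{\Psi'}(f)$ and $\overline{\Phi'}(f)$ as the indicated cokernels is compatible with the \emph{stable} functor category $\overline{{\rm mod}}\mbox{-}\overline{\M}$ as opposed to $\overline{\M}\mbox{-}{\rm{mod}}$ before stabilization; this is handled just as in Proposition \ref{Prop 2.17}, noting that $i'_{\la}$ kills the representables coming from $\va'_{\la}\va'$. Once these variance issues are pinned down, the rest is a verbatim dual of the proof of Theorem \ref{Th-Comparison}.
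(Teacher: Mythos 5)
Your overall strategy---dualize the proof of Theorem \ref{Th-Comparison} via the $(n+2)$-angulated structure of $\overline{\M}$ and the long exact sequence coming from \cite[Proposition 2.5]{GKO}---is exactly what the paper intends (Section \ref{Sec-Dual} states the theorem without proof, as the verbatim dual), and your remarks on the descent modulo $\CW'$ are fine. The problem is that the central identifications in your sketch are wrong, and wrong precisely at the variance bookkeeping you yourself flagged as the main obstacle. From the $n$-exact sequence $0 \rt M^1 \st{d^1}\rt M^2 \rt \cdots \rt M^n \st{d^n}\rt M_1 \st{f}\rt M_2 \rt 0$, the definitions give $\Psi'(f)=\Coker\bigl(\M(M^2,-)\st{(d^1,-)}\lrt\M(M^1,-)\bigr)$ and, stably, $\overline{\Phi'}(f)=\Coker\bigl(\overline{\M}(M_2,-)\st{(\overline{f},-)}\lrt\overline{\M}(M_1,-)\bigr)$: covariant representables are contravariant in the object, so the arrows you wrote, $\overline{\M}(M_1,-)\rt\overline{\M}(M_2,-)$ and $\overline{\M}(M_1,-)\rt\overline{\M}(M^1,-)$, are not maps occurring in the long exact sequence at all. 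Consequently your spliced sequence is also incorrect: chasing the long exact sequence obtained from the $(n+2)$-angle $\overline{M}^1\rt\cdots\rt\overline{M}^n\rt\overline{M}_1\st{\overline{f}}\rt\overline{M}_2\rt\Omega^{-n}_{\La}\overline{M}^1$ yields
\[0 \lrt \overline{\Phi'}(f) \lrt \overline{\M}(\overline{M}^n,-) \lrt \cdots \lrt \overline{\M}(\overline{M}^1,-) \lrt \overline{\Psi'}(f) \lrt 0,\]
with $\overline{\Phi'}(f)$ at the kernel end and $\overline{\Psi'}(f)$ at the cokernel end---the opposite of what you display---and with exactly $n$ middle terms, no extra ``$\Omega^{-n}$-part''.

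The point you mention but never settle is also the one that makes the formula come out as a cosyzygy: the middle terms $\overline{\M}(\overline{M}^i,-)$ are \emph{projective}, not injective, objects of $\overline{\M}\mbox{-}{\rm mod}$, so inside $\overline{\M}\mbox{-}{\rm mod}$ the displayed sequence only says $\overline{\Phi'}(f)\cong\Omega^n\bigl(\overline{\Psi'}(f)\bigr)$ modulo projectives. The statement $\overline{\Phi'}=\Omega^{-n}_{\overline{\M}}\circ\overline{\Psi'}$ on $\overline{{\rm mod}}\mbox{-}\overline{\M}$ results from the contravariance of $\Phi'$ and $\Psi'$: pass to the opposite category, equivalently apply the duality $D\colon\overline{\M}\mbox{-}{\rm mod}\lrt\mmod\overline{\M}$ used in the proof of Theorem \ref{Th-nAuslanderReiten} (available since $\overline{\M}$ is a dualizing variety), which sends the representables to injective objects and converts the sequence above into a coresolution $0\rt D\overline{\Psi'}(f)\rt I^1\rt\cdots\rt I^n\rt D\overline{\Phi'}(f)\rt 0$, exhibiting $\overline{\Phi'}(f)$ as the $n$-cosyzygy of $\overline{\Psi'}(f)$. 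As written, your argument arrives at the stated formula only by combining the swapped sequence with the tacit (and false) assumption that the representables are injective in $\overline{\M}\mbox{-}{\rm mod}$; with the two corrections above, the proof is indeed the straightforward dual of Theorem \ref{Th-Comparison}.
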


\section{Applications}
In this section we plan to present some applications of our results.
Let us begin by recalling the definition of a functor that already has observed by Auslander \cite{Au3}. Define a functor $\Theta: \CF(\M) \lrt \mmod\underline{\M}$ by
\[\Theta(M_1\st{f}\rt M_2)=\Coker(\M( - , M_1) \st{\M( - ,  f)}{\lrt} \M( - , M_2)). \]
Note that since $f$ is an epimorphism, the cokernel is an object of $\mmod \underline{\M}$.

The proof of the following theorem is similar to the proof of Theorem \ref{Th-Psi}.

\begin{theorem}\label{Th-Theta}
The functor $\Theta$ is full, dense and objective. In particular, it induces an equivalence of categories
\[\CF(\M)/{\CV'} \simeq \mmod\underline{\M},\]
where $\CV'$ is the full subcategory of $\CF(\M)$ generated by all objects of the form $(M \st{1}{\rt} M)$ and $(M \rt 0)$ and $M$ runs over all objects of $\M$.
\end{theorem}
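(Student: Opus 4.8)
The plan is to transcribe the proof of Theorem~\ref{Th-Psi}, replacing $n$-cokernels by $n$-kernels and keeping the representable functors at the bottom of the projective resolution. First I would pin down $\Theta$ precisely. For an epimorphism $f\colon M_1\rt M_2$ in $\CF(\M)$, choose an $n$-kernel of $f$, giving an $n$-exact sequence
\[0\lrt M^1\st{d^1}\lrt M^2\lrt\cdots\lrt M^n\st{d^n}\lrt M_1\st{f}\lrt M_2\lrt 0 ,\]
so that applying $\M(-,-)$ and using the $n$-kernel property yields an exact sequence in $\mmod\M$
\[0\lrt\M(-,M^1)\lrt\cdots\lrt\M(-,M^n)\lrt\M(-,M_1)\st{\M(-,f)}\lrt\M(-,M_2)\lrt F\lrt 0 ,\]
where $F=\Theta(f)=\Coker(\M(-,f))$. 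Since $f$ is an epimorphism, every morphism $P\rt M_2$ from a projective $\La$-module lifts along $f$, so $F$ vanishes on $\prj\La$, i.e. $F\in\mmod\underline{\M}$, and the displayed sequence is a finite projective resolution of $F$ in $\mmod\M$. A morphism $(\alpha_1,\alpha_2)\colon f\rt f'$ lifts, by the defining property of $n$-exact sequences, to a morphism between the two $n$-exact sequences; applying Yoneda's lemma turns this into a chain map of the corresponding projective resolutions, and $\Theta(\alpha_1,\alpha_2)$ is the induced map $F\rt F'$. Independence of the lifting and of the chosen $n$-kernels follows from the Comparison Lemma \cite[Lemma~2.1]{J}, exactly as for $\Psi$.

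Next, fullness and density. For fullness, given $\eta\colon F\rt F'$ in $\mmod\underline{\M}$, lift it to a chain map between the projective resolutions above; its components in degrees $0$ and $1$ are, by Yoneda, of the form $\M(-,\alpha_2)$ and $\M(-,\alpha_1)$, and commutativity of the chain map forces $f'\alpha_1=\alpha_2 f$, so $(\alpha_1,\alpha_2)$ is a morphism in $\CF(\M)$ with $\Theta(\alpha_1,\alpha_2)=\eta$. For density, let $F\in\mmod\underline{\M}$ and take a minimal projective presentation $\M(-,M_1)\st{\M(-,g)}\lrt\M(-,M_2)\rt F\rt 0$. Since $F$ vanishes on $\prj\La$ it lies in $\mmodd\M=\Ker\va$, so by the construction of $\va$ (\cite[Proposition~3.1]{AHK}) we get $\Coker(g)=\va(F)=0$; hence $g$ is an epimorphism, $(M_1\st{g}\rt M_2)\in\CF(\M)$, and $\Theta(M_1\st{g}\rt M_2)=\Coker(\M(-,g))=F$.

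It remains to treat objectivity and to identify the kernel objects. Suppose $(\alpha_1,\alpha_2)\colon f\rt f'$ has $\Theta(\alpha_1,\alpha_2)=0$; then the lift of $(\alpha_1,\alpha_2)$ to a chain map of the projective resolutions is null-homotopic, and reading off the homotopy in degrees $0$ and $1$ and applying Yoneda produces morphisms $s^0\colon M_2\rt M'_1$ and $s^1\colon M_1\rt M'^n$ with $\alpha_2=f's^0$ and $\alpha_1=s^0f+d'^n s^1$. Using that $f'd'^n=0$, one checks directly that $(\alpha_1,\alpha_2)$ factors as
\[\begin{tikzcd}
0 \ar[r] & M_1  \ar[r, "f"] \ar[d, "{[s^0f~~s^1]^{t}}"] & M_2 \ar[d, "{s^0}"] \\
0 \ar[r] & M'_1\oplus M'^n \ar[r, "{[1~~0]}"]  \ar[d, "{[1~~d'^n]}"] & M'_1 \ar[d, "{f'}"] \\
0 \ar[r] & M'_1  \ar[r, "f'"] & M'_2 ,
\end{tikzcd}\]
and the middle object $(M'_1\oplus M'^n\st{[1~~0]}\lrt M'_1)=(M'_1\st{1}\lrt M'_1)\oplus(M'^n\rt 0)$ lies in $\CV'$; since $[1~~0]$ is a split epimorphism with kernel $M'^n\in\M$, its $n$-kernel has its first $n-1$ terms zero, whence $\Theta(M'_1\oplus M'^n\rt M'_1)=0$ (alternatively, invoke \cite[Proposition~2.6]{J}). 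So $\Theta$ is objective. For the last claim, applying this to $\mathrm{id}$ on a kernel object $(M_1\st{f}\rt M_2)$ gives $1_{M_2}=fs^0$, so $f$ is a split epimorphism and $(M_1\st{f}\rt M_2)\cong(M_2\st{1}\lrt M_2)\oplus(\Ker f\rt 0)\in\add\CV'$; conversely the generators of $\CV'$ are kernel objects by the computation just made. Hence the kernel of $\Theta$ is generated by $\CV'$, and \ref{Objective} yields the equivalence $\CF(\M)/\CV'\simeq\mmod\underline{\M}$. The only delicate point is the index bookkeeping in the $n$-kernels and the null-homotopy, arranged so that the factorizing object lands in $\CV'$; everything else is a direct dualisation of the arguments for $\Psi$.
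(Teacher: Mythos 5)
Your proposal is correct and follows essentially the paper's route: the paper itself only says the proof is ``similar to the proof of Theorem \ref{Th-Psi}'', and your argument is exactly that adaptation, replacing the $n$-cokernel by an $n$-kernel to get the projective resolution of $\Theta(f)$ and extracting the factorization through $(M'_1\st{1}\rt M'_1)\oplus(M'^n\rt 0)$ from the null-homotopy. Your identification of the kernel objects (split epimorphisms, using that $\M$ is closed under direct summands) matches the intended dual of the argument in Corollary \ref{equiv-Psi}.
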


As a consequence we have the following interesting result.

\begin{corollary}\label{Corollary-Auslander}
There are equivalences of abelian categories
\[\CS(\M)/{\CV}  \simeq \mmod\underline{\M}  \simeq {(\overline{\M}\mbox{-}\rm{mod})}^{\op} \simeq \CF(\M)/{\CV'}.\]
\end{corollary}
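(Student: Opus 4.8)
The plan is to assemble Corollary \ref{Corollary-Auslander} purely from equivalences already established in the excerpt, rather than constructing anything new. The first equivalence $\CS(\M)/{\CV} \simeq \mmod\underline{\M}$ is exactly Corollary \ref{equiv-Psi}, so nothing more is needed there. The last equivalence $\CF(\M)/{\CV'} \simeq \mmod\underline{\M}$ is exactly Theorem \ref{Th-Theta}. Thus the whole content of the corollary reduces to producing the middle equivalence $\mmod\underline{\M} \simeq {(\overline{\M}\mbox{-}\rm{mod})}^{\op}$ and checking that the chain is coherent.

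For the middle equivalence I would argue as follows. By Proposition \ref{Proposition 2.10} we have $\mmod\underline{\M} \simeq \underline{\ola{\M}^{\leqslant 1}}_{F_{\M}}$, the stable category of $\ola{\M}^{\leqslant 1}$ with respect to the $\M$-proper exact structure. Dually, Proposition \ref{Prop 2.17} gives $\overline{\M}\mbox{-}\rm{mod} \simeq \underline{\ora{\M}^{\leqslant 1}}_{F^{\M}}$, the stable category of $\ora{\M}^{\leqslant 1}$ with respect to the coproper exact structure. So it suffices to exhibit a duality between $\underline{\ola{\M}^{\leqslant 1}}_{F_{\M}}$ and $\underline{\ora{\M}^{\leqslant 1}}_{F^{\M}}$ that is compatible with these exact structures. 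The natural candidate is the standard duality $D=\Hom_R(-,E)$ on $\mmod\La$ (where $E$ is the injective envelope of $R/\rad R$), which swaps $\prj\La$ and $\inj\La$, sends an $\M$-proper resolution $0 \to M \to M' \to X \to 0$ to a coproper coresolution $0 \to DX \to DM' \to DM \to 0$ of $DX$, and hence restricts to an (exact) duality $\ola{\M}^{\leqslant 1} \simeq (\ora{\M}^{\leqslant 1})^{\op}$ carrying $F_{\M}$-projectives to $F^{\M}$-injectives. Passing to stable categories yields $\underline{\ola{\M}^{\leqslant 1}}_{F_{\M}} \simeq (\underline{\ora{\M}^{\leqslant 1}}_{F^{\M}})^{\op}$, and combining with the two propositions gives $\mmod\underline{\M} \simeq {(\overline{\M}\mbox{-}\rm{mod})}^{\op}$.

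Alternatively — and perhaps more in the spirit of the paper — one can invoke the duality $\CF(\M) \simeq \CS(\M)^{\op}$ obtained by applying $D$ termwise to a morphism (an epimorphism in $\M$ dualizes to a monomorphism in $\M$, since $D\M = \M$ as $\M$ is functorially finite and $D$ is a duality), which sends $\CV'$ to $\CV$; then $\CF(\M)/\CV' \simeq (\CS(\M)/\CV)^{\op} \simeq (\mmod\underline{\M})^{\op}$. Comparing with Theorem \ref{Th-Theta} this forces $\mmod\underline{\M} \simeq (\mmod\underline{\M})^{\op}$ composed appropriately — so instead I would route the duality through $\Psi'$: Theorem \ref{Theorem Psi'} gives $\CF(\M)/\CV' \simeq (\overline{\M}\mbox{-}\rm{mod})^{\op}$, while Theorem \ref{Th-Theta} gives $\CF(\M)/\CV' \simeq \mmod\underline{\M}$, and since the generating subcategories $\CV'$ coincide in both statements, the two target categories are equivalent: $\mmod\underline{\M} \simeq (\overline{\M}\mbox{-}\rm{mod})^{\op}$. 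This is the cleanest derivation and I would present it this way.

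The main obstacle is bookkeeping rather than mathematics: one must confirm that the subcategory denoted $\CV'$ in Theorem \ref{Th-Theta} is literally the same as the $\CV'$ in Theorem \ref{Theorem Psi'} (both are generated by $M \st{1}\rt M$ and $M \rt 0$), so that the two quotient categories are identical and not merely abstractly equivalent; and that the equivalences assemble into an honest chain rather than a zig-zag with incompatible directions. Once the identification $\CV' = \CV'$ is in hand, the corollary is immediate, so the proof is essentially a one-line citation of Theorems \ref{Th-Psi}/\ref{Th-Theta}/\ref{Theorem Psi'} and Corollary \ref{equiv-Psi}; I would write it as such.
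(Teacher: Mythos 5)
Your final derivation is exactly the paper's proof: the corollary is obtained by citing Corollary \ref{equiv-Psi} for the first equivalence and comparing Theorem \ref{Th-Theta} with Theorem \ref{Theorem Psi'} (both describing the same quotient $\CF(\M)/\CV'$, with the same $\CV'$) for the rest. The earlier detour through the duality $D$ and the stable categories is unnecessary, but since you correctly settle on the one-line citation, your proposal matches the paper's argument.
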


\begin{proof}
The proof follows immediately from Corollary \ref{equiv-Psi} and Theorem \ref{Theorem Psi'} in conjunction with the above theorem.
\end{proof}

\begin{remark}\label{Remark-Sigma}
We let \[\Sigma=\Psi'\circ\Theta^{-1}: \mmod\underline{\M}  {\lrt} \overline{\M}\mbox{-}\rm{mod}\]
denote the duality between $\mmod\underline{\M}$ and $\overline{\M}\mbox{-}\rm{mod}$, introduced in Corollary \ref{Corollary-Auslander}. Note that Auslander \cite{Au3} has proved the existence of a duality between $\mmod\underline{\A}$ and $\overline{\A}\mbox{-}{\rm mod}$, where $\CA$ is an abelian category. Hence, the duality $\Sigma$ of the above corollary can be thought of as a higher version of the Auslander's duality.
\end{remark}

We plan to study the duality $\Sigma$ a little bite more. To this end, let us recall the notion of the defect of an $n$-exact sequence \cite[Definition 3.1]{JK}. Let $\M$ be a subcategory of $\mmod\La$ and
\[\delta: 0 \rt M^0 \st{f^0}\rt M^1 \rt \cdots \rt M^n\st{f^n}\rt M^{n+1}\rt 0\]
be an $n$-exact sequence in $\M$. The contravariant defect of $\delta$, denoted by $\delta^*$, is defined by the exact sequence
\[\Hom_{\La}( - , M^n) \lrt \Hom_{\La}( - , M^{n+1}) \lrt \delta^* \lrt 0,\]
of functors. Dually, the covariant defect of $\delta$, denoted by $\delta_*$, is defined by the following exact sequence of functors
\[ \Hom_{\La}(M^1, - ) \lrt \Hom_{\La}(M^0, - ) \lrt \delta_* \lrt 0.\]

We also need the following easy lemma.

\begin{lemma}\label{Lemma 2.22}
Let $\M$ be an $n$-cluster tilting subcategory of $\mmod\La$. Let
\[\delta: 0 \rt M^0 \st{f^0}\rt M^1 \rt \cdots \rt M^n\st{f^n}\rt M^{n+1}\rt 0\]
be an $n$-exact sequence in $\M$. The following assertions hold.
\begin{itemize}
\item [$(i)$] If $M^n$ belongs to $\prj\La$, then $\delta^* \cong \underline{\M}( - , M^{n+1})$ and $\delta_* \cong \Ext^n_{\La}(M^{n+1}, - )|_{\M}$.
\item [$(ii)$] If $M^1$ belongs to $\inj\La$, then $\delta^* \cong \Ext^n_{\La}( - , M^0)|_{\M}$ and $\delta_* \cong \overline{\M}(M^0, - ).$
\end{itemize}
\end{lemma}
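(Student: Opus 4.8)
The plan is to prove Lemma~\ref{Lemma 2.22} by directly unwinding the definitions of the contravariant and covariant defects of $\delta$, using the $n$-exactness together with the Ext-vanishing built into the definition of an $n$-cluster tilting subcategory. Recall that since $\delta$ is $n$-exact in $\M$, it is in particular an exact sequence of $\La$-modules which is $\Hom_\La(\M,-)$-exact and $\Hom_\La(-,\M)$-exact; moreover, since $\M$ is $n$-cluster tilting we have $\Ext^i_\La(\M,\M)=0$ for $0<i<n$. Break $\delta$ into short exact sequences $0\rt K^j\rt M^j\rt K^{j+1}\rt 0$ for $j=0,\dots,n$ (with $K^0=M^0$, $K^{n+1}=M^{n+1}$, reading $M^j=K^{j}$ appropriately), so the $M^j$ give, up to these kernels, a long exact sequence computing Ext.

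For part $(i)$: assume $M^n\in\prj\La$. First handle $\delta^*$. By definition there is an exact sequence $\Hom_\La(-,M^n)\rt\Hom_\La(-,M^{n+1})\rt\delta^*\rt 0$ of functors; restricting to $\M$ and using that the tail $M^{n-1}\rt M^n\rt M^{n+1}$ of $\delta$ is $\Hom_\La(\M,-)$-exact, one sees $\delta^*\vert_\M$ is the cokernel of $\M(-,M^n)\rt\M(-,M^{n+1})$, hence lies in $\mmod\M$; because $M^n\in\prj\La$ this cokernel vanishes on $\prj\La$ (the relevant map $\Hom_\La(P,M^n)\rt\Hom_\La(P,M^{n+1})$ is surjective for $P$ projective), so $\delta^*\cong\underline\M(-,M^{n+1})$ in $\mmod\underline\M$. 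Next, $\delta_*$: again by definition $\Hom_\La(M^1,-)\rt\Hom_\La(M^0,-)\rt\delta_*\rt 0$ is exact. The point is that $\delta$ is a length-$(n+2)$ exact sequence, so splicing the complex $0\rt M^0\rt M^1\rt\cdots\rt M^{n+1}\rt 0$ computes $\Ext^n_\La(M^{n+1},-)$ via the first $n+1$ terms precisely when the last term $M^n$ is projective — indeed, delete $M^n$ and $M^{n+1}$ and use that $M^n$ projective makes $0\rt M^0\rt\cdots\rt M^{n-1}\rt K^n\rt 0$ together with $K^n$ a projective cover datum; more carefully, compare $\delta$ against an honest projective resolution of $M^{n+1}$ and chase. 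Restricting to $\M$ and using $\Ext^i_\La(\M,\M)=0$ for $0<i<n$ kills all intermediate Ext groups, leaving $\delta_*\cong\Ext^n_\La(M^{n+1},-)\vert_\M$.

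Part $(ii)$ is formally dual: with $M^1\in\inj\La$, the covariant defect $\delta_*$ is the cokernel of $\M(M^1,-)\rt\M(M^0,-)$, which vanishes on $\inj\La$, so $\delta_*\cong\overline\M(M^0,-)$; and the contravariant defect $\delta^*$, computed by the cochain complex $0\rt M^0\rt\cdots\rt M^{n+1}\rt 0$ with $M^1$ injective playing the role of the first cosyzygy term, gives $\Ext^n_\La(-,M^0)$, which restricted to $\M$ is $\Ext^n_\La(-,M^0)\vert_\M$ by the same Ext-vanishing. One can either repeat the argument of $(i)$ in the opposite category $(\mmod\La)^{\op}$ with $\M^{\op}$ its $n$-cluster tilting subcategory, or invoke the duals of the relevant statements collected in Section~\ref{Sec-Dual}.

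The main obstacle I anticipate is the bookkeeping in identifying the truncated complex arising from $\delta$ with the beginning of a genuine projective (resp. injective) resolution: one must argue that when $M^n$ is projective the syzygy $K^n=\Ker(M^n\rt M^{n+1})$ is the $(n-1)$-st syzygy of $M^{n+1}$ in a way compatible with the comparison map, so that $H^n$ of $\Hom_\La(\delta_{\leq n},-)\vert_\M$ really is $\Ext^n_\La(M^{n+1},-)\vert_\M$ and not something shifted, and that all lower cohomology vanishes — this is exactly where $\Ext^i_\La(\M,\M)=0$ for $0<i<n$ is used and must be invoked for each of the $n-1$ short exact pieces. Once that dimension-shift is set up cleanly via the short exact sequences $\varepsilon^j$, the rest is a routine diagram chase and the statement for the defects that vanish on projectives/injectives is immediate from the definitions.
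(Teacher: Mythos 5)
Your strategy is the same as the paper's: for $\delta_*$ you break $\delta$ into the short exact sequences $0\rt K^i\rt M^i\rt K^{i+1}\rt 0$ and dimension-shift, using $\Ext^i_{\La}(\M,\M)=0$ for $0<i<n$ at the intermediate stages and the projectivity of $M^n$ at the last one, to obtain $\delta_*\cong\Ext^1_{\La}(K^2,-)|_{\M}\cong\cdots\cong\Ext^n_{\La}(M^{n+1},-)|_{\M}$; part $(ii)$ is handled dually, exactly as in the paper. Two small remarks on that half: $K^n=\Ker f^n$ is the \emph{first} syzygy of $M^{n+1}$, not the $(n-1)$-st, and the alternative you float (comparing $\delta$ with an honest projective resolution of $M^{n+1}$) does not work literally, since $M^1,\dots,M^{n-1}$ need not be projective; the identification is only valid after restricting to $\M$, which is precisely what the stepwise dimension shift through the $\varepsilon^j$ delivers, so stick with that formulation.

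The one step whose justification fails as written is the identification $\delta^*\cong\underline{\M}(-,M^{n+1})$. You argue that the cokernel of $\M(-,M^n)\st{\M(-,f^n)}\lrt\M(-,M^{n+1})$ vanishes on $\prj\La$ and conclude the isomorphism from that; but vanishing on projectives only places $\delta^*$ in $\mmod\underline{\M}$, it does not identify it. Indeed, for \emph{any} epimorphism $g\colon N\rt M^{n+1}$ with $N\in\M$ the cokernel of $\M(-,g)$ vanishes on projectives, yet it need not be $\underline{\M}(-,M^{n+1})$ when $N$ is not projective. What has to be checked is that the image of $\M(-,f^n)$ coincides with the ideal of maps factoring through a projective: one inclusion holds because $M^n$ is projective (a map factoring through $f^n$ factors through the projective module $M^n$), and the other because $f^n$ is an epimorphism of $\La$-modules (as $\M\supseteq\prj\La$, the $n$-exact sequence $\delta$ is exact as a sequence of modules), so any composite $X\rt P\rt M^{n+1}$ with $P$ projective lifts along $f^n$ and hence lies in the image. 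With these two lines added, your argument agrees with the paper's proof, which treats this point as immediate from the definition.
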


\begin{proof}
We just prove part $(i)$. The statement $(ii)$ follows similarly. So assume that $M^n$ is a projective $\La$-module. In this case, the contravariant defect follows by definition.  For the covariant defect, set $K^i:=\Ker( f^i)$ for $i \in \{1, \cdots, n\}$. Since $\M$ is an $n$-cluster tilting subcategory of $\mmod\La$, we have $\Ext^i_{\La}(\M, \M)=0$, for $i \in \{1, \cdots, n-1\}$. This implies that
$\delta_*\cong \Ext^1_{\La}(K^2, - )|_{\M}$ and then dimension shifting argument in view of the short exact sequences
\[0 \lrt K^i \lrt M^i \lrt K^{i+1} \lrt 0, \ \ i \in \{2, \cdots, n\}, \]
applies to show that $\delta_* \cong \Ext^n_{\La}(M^{n+1}, - )|_{\M}$, where by convention $K^{n+1}=M^{n+1}$.
\end{proof}

\begin{proposition}\label{Prop-Sigma}
Let  $\Sigma: \mmod\underline{\M} \lrt \overline{\M}\mbox{-}{\rm mod}$ be the duality of Remark \ref{Remark-Sigma}, where $\M$ is an $n$-cluster tilting subcategory of $\mmod \La$. Then for every $n$-exact sequence $\delta$ of $\M$, $\Sigma(\delta^*)=\delta_*$ and $\Sigma(\delta_*)=\delta^*$. In particular, we have the following statements.
\begin{itemize}
\item [$(i)$] $\Sigma|: \prj\mmod\underline{\M} \rt \inj\overline{\M}\mbox{-}{\rm mod}$ is defined by $\Sigma|(\underline{\M}( - , X))=\Ext^n_{\La}(X, - )|_{\M}$ and is a duality.
\item [$(ii)$] $\Sigma^{-1}|:\prj\overline{\M}\mbox{-}{\rm mod} \rt \inj\mmod\underline{\M}$ is defined by $\overline{\M}(X, - ) \mapsto \Ext^n( - , X)|_{\M}$ and is a duality.
\end{itemize}
\end{proposition}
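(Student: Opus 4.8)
The plan is to produce, for each $n$-exact sequence
\[\delta: 0 \rt M^0 \st{f^0}\rt M^1 \rt \cdots \rt M^n \st{f^n}\rt M^{n+1}\rt 0\]
of $\M$, a \emph{single} object of $\CF(\M)$ which $\Theta$ sends to $\delta^*$ and $\Psi'$ sends to $\delta_*$; both identities in the statement then fall out at once. First I would note that $f^n$ is an epimorphism, so $\theta_\delta := (M^n \st{f^n}\rt M^{n+1})$ is an object of $\CF(\M)$, and that by the definition of $\Theta$ recalled before Theorem \ref{Th-Theta},
\[\Theta(\theta_\delta) = \Coker\big(\M( - , M^n) \st{\M( - , f^n)}\lrt \M( - , M^{n+1})\big),\]
which is exactly $\delta^*$; in particular this reconfirms $\delta^* \in \mmod\underline{\M}$, since $\M( - , f^n)$ is surjective on projectives. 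Next, the definition of $n$-exactness says precisely that $(f^0, \dots, f^{n-1})$ is an $n$-kernel of $f^n$. Feeding this $n$-kernel into the construction of $\Psi'$ — which is independent of the chosen $n$-kernel, two $n$-kernels being homotopy equivalent — and matching the terms $M^0, \dots, M^{n-1}$ with the ones appearing in the definition of $\Psi'$, I obtain
\[\Psi'(\theta_\delta) = \Coker\big(\M(M^1, - ) \st{\M(f^0, - )}\lrt \M(M^0, - )\big) = \delta_*,\]
which also shows $\delta_* \in \overline{\M}\mbox{-}{\rm mod}$.

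Now $\Theta$ and $\Psi'$ are full, dense and objective with the \emph{same} kernel subcategory $\CV'$ (Theorems \ref{Th-Theta} and \ref{Theorem Psi'}), hence induce equivalences $\overline{\Theta} : \CF(\M)/\CV' \st{\simeq}\lrt \mmod\underline{\M}$ and $\overline{\Psi'} : \CF(\M)/\CV' \st{\simeq}\lrt (\overline{\M}\mbox{-}{\rm mod})^{\op}$, and $\Sigma = \overline{\Psi'} \circ \overline{\Theta}^{-1}$ by Remark \ref{Remark-Sigma}. From $\Theta(\theta_\delta) = \delta^*$ we read off $\overline{\Theta}^{-1}(\delta^*) = [\theta_\delta]$, so $\Sigma(\delta^*) = \overline{\Psi'}([\theta_\delta]) = \Psi'(\theta_\delta) = \delta_*$; reading the same witness $\theta_\delta$ in the reverse direction gives $\Sigma^{-1}(\delta_*) = \delta^*$, which is the second identity of the statement. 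For part $(i)$, given $X \in \M$ I would choose an epimorphism $P \twoheadrightarrow X$ with $P \in \prj\La$ (projectives lie in $\M$) and complete it, using that $\M$ is $n$-abelian, to an $n$-exact sequence $\delta$ with $M^n = P$ and $M^{n+1} = X$. Then Lemma \ref{Lemma 2.22}$(i)$ gives $\delta^* \cong \underline{\M}( - , X)$ and $\delta_* \cong \Ext^n_{\La}(X, - )|_{\M}$, so the first part yields $\Sigma(\underline{\M}( - , X)) \cong \Ext^n_{\La}(X, - )|_{\M}$. Since a duality between abelian categories interchanges projective and injective objects, $\Sigma$ restricts to a duality $\prj\mmod\underline{\M} \rt \inj\overline{\M}\mbox{-}{\rm mod}$, and since the functors $\underline{\M}( - , X)$ with $X \in \M$ exhaust the projectives of $\mmod\underline{\M}$, this proves $(i)$. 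Part $(ii)$ is the exact dual: for $X \in \M$ complete the injective envelope $X \hookrightarrow I$ with $I \in \inj\La \subseteq \M$ to an $n$-exact sequence $\delta$ with $M^0 = X$ and $M^1 = I$, apply Lemma \ref{Lemma 2.22}$(ii)$ to get $\delta_* \cong \overline{\M}(X, - )$ and $\delta^* \cong \Ext^n_{\La}( - , X)|_{\M}$, and invoke $\Sigma^{-1}(\delta_*) = \delta^*$.

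The only genuinely delicate point I foresee is the bookkeeping behind the second display: recognising the truncation $0 \rt M^0 \rt \cdots \rt M^{n-1} \rt M^n$ of $\delta$ as an $n$-kernel of $f^n$ in the sense used to define $\Psi'$, and tracking the resulting index shift so that the cokernel computing $\Psi'(\theta_\delta)$ literally becomes the one defining $\delta_*$. Everything else reduces to facts already in place: the explicit formulas for $\Theta$ and $\Psi'$, Theorems \ref{Th-Theta} and \ref{Theorem Psi'} (which make $\Sigma$ well defined and record its kernel bookkeeping), Lemma \ref{Lemma 2.22}, and the standard fact that dualities exchange projectives and injectives.
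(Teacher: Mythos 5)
Your proposal is correct and follows essentially the same route as the paper: the identities $\Sigma(\delta^*)=\delta_*$ and $\Sigma^{-1}(\delta_*)=\delta^*$ come straight from the definition $\Sigma=\Psi'\circ\Theta^{-1}$ via the witness $(M^n\st{f^n}\rt M^{n+1})\in\CF(\M)$, and parts $(i)$–$(ii)$ are obtained, as in the paper, by completing an epimorphism from a projective (resp.\ a monomorphism into an injective) to an $n$-exact sequence and invoking Lemma \ref{Lemma 2.22} together with the fact that representables exhaust the projectives. Your write-up just supplies the bookkeeping the paper leaves as ``follows directly by definition.''
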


\begin{proof}
 The facts that $\Sigma(\delta^*)=\delta_*$ and $\Sigma(\delta_*)=\delta^*$ follow directly by definition of $\Sigma$. For the second part, we just prove the statement $(i)$. Statement $(ii)$ follows similarly. It is known that the projective functors of $\mmod\underline{\M}$ are just representable functors. Let $\underline{\M}( - , X)$ be a projective object. Consider the projective cover $P \rt X$ of $X$. By taking its $n$-kernel we get an $n$-exact sequence. Now statement $(i)$ of Lemma \ref{Lemma 2.22} in view of the first part of the proposition implies the result.
\end{proof}

As an immediate consequence of the above proposition, we prove a higher version of Hilton-Rees theorem for $n$-cluster tilting subcategories. For a recent account on the Hilton-Rees theorem and a `short and straightforward proof' of it see \cite[\S 4]{M}.

\begin{theorem} (Higher Hilton-Rees)\label{Th-HiltonRees}
Let $\M$ be an $n$-cluster tilting subcategory of $\mmod\La$, and $X, Y$ in $\M.$
\begin{itemize}
\item [$(i)$] There is an isomorphism between $\underline{\M}(X, Y)$ and the group of natural transformations from $\Ext^n_{\La}(X, - )|_{\M}$ to
$\Ext^n_{\La}(Y, - )|_{\M}$.
\item [$(ii)$] There is an isomorphism between $\overline{\M}(X, Y)$ and the group of natural transformations from $\Ext^n_{\La}( - , X)|_{\M}$ to $\Ext^n_{\La}( - ,Y)|_{\M}$.
\end{itemize}
\end{theorem}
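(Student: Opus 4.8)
The plan is to deduce the Higher Hilton--Rees theorem directly from Proposition \ref{Prop-Sigma}, together with the Yoneda lemma applied in the functor category $\mmod\underline{\M}$ (and its dual $\overline{\M}\mbox{-}{\rm mod}$). The key observation is that for $X, Y \in \M$, the representable functors $\underline{\M}(-, X)$ and $\underline{\M}(-, Y)$ are precisely the projective objects of $\mmod\underline{\M}$, and by Yoneda the group of natural transformations $\underline{\M}(-,X) \to \underline{\M}(-,Y)$ is naturally isomorphic to $\underline{\M}(X,Y)$. On the other side, $\Ext^n_{\La}(X,-)|_{\M}$ and $\Ext^n_{\La}(Y,-)|_{\M}$ are the injective objects of $\overline{\M}\mbox{-}{\rm mod}$ by Proposition \ref{Prop-Sigma}(i), and the duality $\Sigma$ sends $\underline{\M}(-,X) \mapsto \Ext^n_{\La}(X,-)|_{\M}$.

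First I would prove part $(i)$. By the Yoneda lemma in $\mmod\underline{\M}$ there is a natural isomorphism
\[
\Hom_{\mmod\underline{\M}}(\underline{\M}(-,X), \underline{\M}(-,Y)) \;\cong\; \underline{\M}(X,Y).
\]
Since $\Sigma: \mmod\underline{\M} \lrt \overline{\M}\mbox{-}{\rm mod}$ is a duality (Corollary \ref{Corollary-Auslander} and Remark \ref{Remark-Sigma}), it induces an isomorphism of Hom-groups
\[
\Hom_{\mmod\underline{\M}}(\underline{\M}(-,X), \underline{\M}(-,Y)) \;\cong\; \Hom_{\overline{\M}\mbox{-}{\rm mod}}(\Sigma(\underline{\M}(-,Y)), \Sigma(\underline{\M}(-,X))).
\]
By Proposition \ref{Prop-Sigma}(i), $\Sigma(\underline{\M}(-,X)) = \Ext^n_{\La}(X,-)|_{\M}$ and likewise for $Y$, so the right-hand side is the group of natural transformations from $\Ext^n_{\La}(X,-)|_{\M}$ to $\Ext^n_{\La}(Y,-)|_{\M}$. (One should check the direction of the arrows: the duality reverses them, but after composing with Yoneda on the source side one still lands on transformations $X \to Y$ on both ends — this is the one place to be careful about variance.) Combining the two displayed isomorphisms gives $(i)$.

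For part $(ii)$ the argument is entirely dual, using $\Sigma^{-1}: \overline{\M}\mbox{-}{\rm mod} \lrt \mmod\underline{\M}$ together with the Yoneda lemma in $\overline{\M}\mbox{-}{\rm mod}$: there $\overline{\M}(X,Y) \cong \Hom_{\overline{\M}\mbox{-}{\rm mod}}(\overline{\M}(X,-), \overline{\M}(Y,-))$, and Proposition \ref{Prop-Sigma}(ii) identifies $\Sigma^{-1}(\overline{\M}(X,-)) = \Ext^n_{\La}(-,X)|_{\M}$, so applying $\Sigma^{-1}$ and Yoneda converts this into natural transformations between $\Ext^n_{\La}(-,X)|_{\M}$ and $\Ext^n_{\La}(-,Y)|_{\M}$.

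I do not expect any serious obstacle here: the real content has already been packaged into Proposition \ref{Prop-Sigma} (namely that $\Sigma$ carries representable functors to $\Ext^n$-functors and restricts to a duality between projectives and injectives). The only thing requiring genuine care is bookkeeping of variance — making sure that the contravariance of $\Sigma$ combined with the contravariance of the Yoneda embedding on one side produces a covariant identification $\underline{\M}(X,Y) \cong \mathrm{Nat}(\Ext^n_{\La}(X,-)|_{\M}, \Ext^n_{\La}(Y,-)|_{\M})$ rather than its opposite — and checking that all the isomorphisms involved are natural in $X$ and $Y$, which follows because Yoneda and $\Sigma$ are both functorial.
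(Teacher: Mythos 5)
Your proposal follows exactly the paper's own route: the published proof consists of the single remark that part $(i)$ follows from the duality $\Sigma|$ of Proposition \ref{Prop-Sigma}$(i)$ together with Yoneda's lemma, and that $(ii)$ is similar, which is precisely the argument you outline. So there is no difference in strategy, and the substance (Yoneda plus the fact that $\Sigma$ carries $\underline{\M}(-,X)$ to $\Ext^n_{\La}(X,-)|_{\M}$) is the right one.

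The one point you yourself single out as delicate, the variance, is however not handled correctly in your write-up. Since $\Sigma$ is contravariant, it induces $\Hom(F,G)\cong\Hom(\Sigma G,\Sigma F)$, so combining with Yoneda you get
\[
\underline{\M}(X,Y)\;\cong\;\Hom\bigl(\underline{\M}(-,X),\underline{\M}(-,Y)\bigr)\;\cong\;\mathrm{Nat}\bigl(\Ext^n_{\La}(Y,-)|_{\M},\,\Ext^n_{\La}(X,-)|_{\M}\bigr),
\]
with the arrows \emph{reversed}, contrary to your parenthetical claim that ``one still lands on transformations $X\to Y$ on both ends''. This reversed form is the classical Hilton--Rees variance (a stable map $X\to Y$ induces $\Ext^n_{\La}(Y,-)\to\Ext^n_{\La}(X,-)$), and it shows that the literal direction in statement $(i)$ should be read with the two $\Ext$-functors interchanged (equivalently, with $\underline{\M}(Y,X)$ on the left); note that in general $\underline{\M}(X,Y)\not\cong\underline{\M}(Y,X)$, so this is not a harmless relabelling. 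Similarly, in $(ii)$ your use of the covariant Yoneda lemma is misstated: $\Hom_{\overline{\M}\mbox{-}{\rm mod}}\bigl(\overline{\M}(X,-),\overline{\M}(Y,-)\bigr)\cong\overline{\M}(Y,X)$, not $\overline{\M}(X,Y)$. The correct chain is $\overline{\M}(X,Y)\cong\Hom\bigl(\overline{\M}(Y,-),\overline{\M}(X,-)\bigr)\cong\mathrm{Nat}\bigl(\Sigma^{-1}\overline{\M}(X,-),\Sigma^{-1}\overline{\M}(Y,-)\bigr)=\mathrm{Nat}\bigl(\Ext^n_{\La}(-,X)|_{\M},\Ext^n_{\La}(-,Y)|_{\M}\bigr)$, which does give $(ii)$ exactly as stated. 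Once these two contravariances are tracked carefully, your proof is complete and coincides with the paper's.
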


\begin{proof}
Part $(i)$ folows from the duality $\Sigma|$ of part $(i)$ of the above proposition in view of Yoneda's lemma. Part $(ii)$ follows similarly.
\end{proof}

\begin{remark}
The above two results are known over an $n$-abelian category with enough projective and enough injective objects, see Proposition 4.35 and Theorem 4.36 of \cite{Li}.  So one can conclude that they also hold true for $n\Z$-cluster tilting subcategories. In fact, here we extend them to any $n$-cluster tilting subcategory of $\mmod\La$.
\end{remark}

Our next aim is to state and prove a higher version of Auslander's direct summand conjecture \cite{Au1}. The conjecture says that for an object $A$ of an abelian category $\A$ with enough projective objects, any direct summand $F$ of $\Ext^1_{\A}(A, - )$ is of the form $\Ext^1_{\A}(B, - )$, for some $B$ in $\A.$ For a review of the conjecture and related results see the introduction of \cite{M}. We just mention that, as Auslander proved \cite[Proposition  4.3]{Au1}, if the above conjecture holds true, then functors of the form $\Ext^1_{\A}(A, - )$ are the only injectives in $\mmod\underline{\A}$. A relative version of this conjecture is proved in \cite[Theorem 3.10]{H2}.

\begin{theorem}\label{Th-DirectSummand}
Let $\M$ be an $n$-cluster tilting subcategory of $\mmod \La$.
\begin{itemize}
\item [$(i)$] If $F$ is a direct summand of $\Ext^n_{\La}(A, - )|_{\M}$, then there exists $B \in \M$ such that $F \simeq \Ext^n_{\La}(B, - )|_{\M}$.
\item [$(ii)$] If $F$ is a direct summand of $\Ext^n_{\La}( - , A)|_{\M}$, then there exists $B \in \M$ such that $F \simeq \Ext^n_{\La}( - , B)|_{\M}$.
\end{itemize}	
\end{theorem}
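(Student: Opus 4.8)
The plan is to read both statements off Proposition \ref{Prop-Sigma} together with the single extra fact that a direct summand of an injective object of an abelian category is again injective. The preliminary step is to recall that the projective objects of $\mmod\underline{\M}$ are precisely the representable functors $\underline{\M}(-,X)$, $X\in\M$, and dually that the projective objects of $\overline{\M}\mbox{-}{\rm mod}$ are precisely the $\overline{\M}(X,-)$, $X\in\M$; this is the fact already invoked in the proof of Proposition \ref{Prop-Sigma}, and it relies on $\underline{\M}$ and $\overline{\M}$ being idempotent complete, which in turn holds because $\M$, as a full subcategory of $\mmod\La$ closed under direct summands, is Krull--Schmidt and hence so are its stable categories. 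Since by Proposition \ref{Prop-Sigma}$(i)$ the duality $\Sigma$ of Remark \ref{Remark-Sigma} restricts to an anti-equivalence $\prj\mmod\underline{\M}\lrt\inj\overline{\M}\mbox{-}{\rm mod}$ carrying $\underline{\M}(-,X)$ to $\Ext^n_{\La}(X,-)|_{\M}$, this says exactly that the injective objects of $\overline{\M}\mbox{-}{\rm mod}$ are the functors $\Ext^n_{\La}(X,-)|_{\M}$ with $X\in\M$; symmetrically, Proposition \ref{Prop-Sigma}$(ii)$ gives that the injective objects of $\mmod\underline{\M}$ are the functors $\Ext^n_{\La}(-,X)|_{\M}$ with $X\in\M$.

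With this in hand, part $(i)$ goes as follows: by the above, $\Ext^n_{\La}(A,-)|_{\M}$ is an injective object of $\overline{\M}\mbox{-}{\rm mod}$, so its direct summand $F$ is injective in $\overline{\M}\mbox{-}{\rm mod}$, and consequently, by the classification of injectives just obtained, $F\cong\Ext^n_{\La}(B,-)|_{\M}$ for some $B\in\M$. Part $(ii)$ is the mirror argument carried out inside $\mmod\underline{\M}$: $\Ext^n_{\La}(-,A)|_{\M}$ is an injective object of $\mmod\underline{\M}$, hence so is any direct summand $F$ of it, hence $F\cong\Ext^n_{\La}(-,B)|_{\M}$ for some $B\in\M$. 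One may equally phrase the argument after transporting along $\Sigma$, where it becomes the trivial observation that a direct summand of a projective (hence representable) functor is again projective, hence representable.

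Essentially all of the substance is already contained in Proposition \ref{Prop-Sigma}, so I do not expect a genuine obstacle; the points that need care are, first, checking that $\Ext^n_{\La}(A,-)|_{\M}$ (and $\Ext^n_{\La}(-,A)|_{\M}$) really is an object of the relevant finitely presented functor category --- this is covered by Proposition \ref{Prop-Sigma} and Lemma \ref{Lemma 2.22} when $A\in\M$, and requires a short preliminary reduction if $A$ is allowed to range over all of $\mmod\La$ --- and second, the identification of the projective functors with the representable ones, which is precisely where the Krull--Schmidt (hence idempotent-complete) nature of $\underline{\M}$ and $\overline{\M}$ is used.
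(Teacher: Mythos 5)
Your proof is correct and takes essentially the same route as the paper: everything is read off Proposition \ref{Prop-Sigma} (with $A\in\M$, so that $\underline{\M}(-,A)$ makes sense) together with the fact that a direct summand of a representable projective in $\mmod\underline{\M}$ is again representable, and the paper's own argument is exactly the ``transport along $\Sigma$'' phrasing you give at the end, pulling the summand $F$ back to a summand $\underline{\M}(-,B)$ of $\underline{\M}(-,A)$. No gap.
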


 \begin{proof}
 $(i).$ Let $F$ be a direct summand of $\Ext^n_{\La}(A,-)|_{\M}$. By Proposition \ref{Prop-Sigma}, $\Ext^n_{\La}(A, - )|_{\M}$ is isomorphic to $\Sigma|(\underline{\M}( - , A)).$ Since $\Sigma$ is an additive functor, there exists a summand $G$ of $\underline{\M}( - , A)$ such that $\Sigma(G)=F$. But $G$, as a summand of $\underline{\M}( - , A)$ should be of the form $\underline{\M}( - , B)$, for some summand  $B$ of $A$, as $\M$ is closed under direct summands. Hence $F \cong \Ext^n_{\La}(B, - )|_{\M}$, as desired.
 \end{proof}

Next result of the paper reproves the existence of $n$-Auslander-Reiten translation $\tau_n=\tau\Omega^{n-1}_{\La}$ that is already known by \cite{I1}. Our proof provides a functorial approach for the existence of $n$-Auslander-Reiten translation.

\begin{theorem}\label{Th-nAuslanderReiten}
Let $\M$ be an $n$-cluster tilting subcategory of $\mmod \La$. Then, there is an equivalence $\tau_n: \underline{\M} \lrt \overline{\M}$ such that for every $X, Y \in \M$,
\[\Ext^n_{\La}(X, Y) \cong D\overline{\M}(Y, \tau_n(X)).\]	
\end{theorem}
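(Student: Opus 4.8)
The plan is to build the $n$-Auslander--Reiten translation by composing the two dualities already constructed in the paper with the classical Auslander--Reiten duality $D$ on $\unmod\La$. Concretely, recall from Remark \ref{Remark-Sigma} and Proposition \ref{Prop-Sigma} the duality $\Sigma: \mmod\underline{\M} \lrt \overline{\M}\mbox{-}\rm{mod}$, which on representable (projective) objects is given by $\Sigma(\underline{\M}(-,X)) \cong \Ext^n_{\La}(X,-)|_{\M}$. Dually (part $(ii)$ of that proposition), $\Sigma^{-1}$ sends $\overline{\M}(X,-)$ to $\Ext^n_{\La}(-,X)|_{\M}$. The idea is that $\tau_n$ should be the functor on stable categories induced by $\Sigma$ once we translate everything through Yoneda; the isomorphism $\Ext^n_{\La}(X,Y) \cong D\overline{\M}(Y,\tau_n(X))$ is then a reformulation of the fact that the (classical, one-term) Auslander--Reiten formula $\Ext^n_{\La}(X,Y)\cong D\underline{\Hom}_{\La}(\Omega^{n-1}Y, \tau X)$ can be rephrased via the defects of an appropriate $n$-exact sequence.

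First I would fix $X \in \M$ and choose a projective cover $P \to X$ in $\mmod\La$; since $\M$ is $n$-cluster tilting we may complete the resulting monomorphism $\Omega X \hookrightarrow P$ to an $n$-exact sequence
\[0 \lrt \Omega^n_{\La}X \lrt P^{n-1}\lrt \cdots \lrt P^1 \lrt P^0 \lrt X \lrt 0\]
with each $P^i$ in $\M$ (using that $\M$ contains the projectives and iterating $n$-kernels; the terms of any long enough projective resolution of $X$ truncate into $\M$ because $\M$ is $n$-cluster tilting, by the comment after the Notation block). Call this $n$-exact sequence $\delta_X$ after splicing off the projective pieces so that $M^n = P^{n-1}\in\prj\La$; then Lemma \ref{Lemma 2.22}$(i)$ gives $\delta_X^* \cong \underline{\M}(-, X)$ — wait, more precisely I want the contravariant defect to be $\underline{\M}(-,X)$ and the covariant defect to be $\Ext^n_{\La}(X,-)|_{\M}$. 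Define $\tau_n(X)$ via the $n$-cosyzygy side: take instead an $n$-exact sequence with $M^1\in\inj\La$ realizing $\Ext^n_{\La}(X,-)|_{\M}$ and set $\tau_n(X) := \Omega^{-(n-1)}$ of the relevant term, or, cleanly, define $\tau_n$ on objects by requiring $\overline{\M}(-,\tau_n(X))$ — I mean $\overline{\M}(\tau_n(X),-)$... the cleanest route is: set $\tau_n := \Sigma \circ (\text{Yoneda})$, i.e. for $X\in\M$ let $\tau_n(X)$ be the unique (up to iso in $\overline{\M}$) object with $\overline{\M}(-,\tau_n(X)) \cong \Sigma(\underline{\M}(-,X))$... but $\Sigma$ lands in left modules, so actually $\Sigma(\underline{\M}(-,X)) = \Ext^n_{\La}(X,-)|_{\M}$, a left $\M$-module which need not be representable. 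So $\tau_n$ is \emph{not} simply transported Yoneda; rather one must use the AR-duality.

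So the correct plan: use the formula $D\Ext^n_{\La}(X,Y) \cong \underline{\Hom}_{\La}(Y,\tau\Omega^{n-1}X)$, which is the classical AR-formula $D\Ext^1_{\La}(\Omega^{n-1}X,Y)\cong\underline{\Hom}(Y,\tau\Omega^{n-1}X)$ together with dimension shifting $\Ext^n_{\La}(X,Y)\cong\Ext^1_{\La}(\Omega^{n-1}X,Y)$ (valid on $\M$ because $\Ext^i_{\La}(\M,\M)=0$ for $0<i<n$). Then set $\tau_n(X) := \tau\Omega^{n-1}_{\La}X$, show $\tau\Omega^{n-1}_{\La}X \in \M$ — this is the key point: one needs $\Omega^{n-1}_{\La}X$ to have a well-behaved minimal presentation whose transpose/AR-translate lands back in $\M$; here I would invoke that $\M$ being $n$-cluster tilting forces $\Omega^{n-1}X\in{}^{\perp_1}$-type conditions so that its AR translate lies in $\M^{\perp_n}=\M$, arguing via $\Ext^i_{\La}(\M,\tau\Omega^{n-1}X) \cong D\overline{\Hom}(\Omega^{i}(\Omega^{n-1}X)?,\ldots)$ vanishing for $0<i<n$. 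Then $D\overline{\M}(Y,\tau_n X)$ has to be compared with $\underline{\Hom}(Y,\tau\Omega^{n-1}X)$ — and here I would invoke the duality $\Sigma$ of Remark \ref{Remark-Sigma} and Proposition \ref{Prop-Sigma}, which on injectives/projectives interchanges $\underline{\M}(-,X)$ with $\Ext^n_{\La}(X,-)|_\M$ and dually, to see that $\overline{\M}(Y,\tau_n X)$ is exactly the right thing. The functoriality of $\tau_n$ as an equivalence $\underline{\M}\to\overline{\M}$ then follows because both sides represent the same functor $\M\times\M^{\op}\to\CA b$, namely $(X,Y)\mapsto D\Ext^n_{\La}(X,Y)$, which is bilinear and, by Theorem \ref{Th-HiltonRees}, pro-represented correctly on each variable.

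The main obstacle I expect is precisely showing \emph{$\tau_n(X)=\tau\Omega^{n-1}_{\La}X$ lies in $\M$ and that $\tau_n$ is an equivalence} (not just a functor with the Hom-formula): one must verify that $\tau_n$ is well-defined on the stable category $\underline{\M}$ (independence of the choice of minimal projective presentation — this is where idempotent-completeness/Krull--Schmidt of $\M$ and minimality enter), that it is full and faithful, and that it is dense. Density and full faithfulness should follow formally once the Hom-formula $\Ext^n_{\La}(X,Y)\cong D\overline{\M}(Y,\tau_n X)$ is established together with the dual formula $\Ext^n_{\La}(X,Y)\cong D\underline{\M}(\tau_n^{-1}Y, X)$, by a standard Yoneda argument (an equivalence is detected by a perfect pairing of Hom-spaces); the genuine content is the membership $\tau_n X\in\M$, which I would extract from the vanishing $\Ext^i_{\La}(\M,\tau\Omega^{n-1}_{\La}X)=0$ for $0<i<n$ — provable by the AR-formula $\Ext^i_{\La}(\M,\tau\Omega^{n-1}X)\cong D\underline{\Hom}_{\La}(\Omega^{n-1}X,\Omega^{i}(\M))$-type identities combined with $\M$ being $n$-cluster tilting — hence $\tau\Omega^{n-1}X\in\M^{\perp_n}=\M$, and symmetrically $\tau_n^{-1}=\tau^{-1}\Omega^{-(n-1)}_{\La}$ lands in $\M$, giving the quasi-inverse.
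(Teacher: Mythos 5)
Your proposal abandons the paper's functorial strategy at exactly the point where its key idea enters, and the classical route you fall back on leaves the real content unproved. In the first half you correctly observe that $\Sigma(\underline{\M}(-,X))\cong\Ext^n_{\La}(X,-)|_{\M}$ is a left $\M$-module that need not be representable, and you conclude that $\tau_n$ cannot be obtained by transporting $\Sigma$ through Yoneda. The paper resolves precisely this obstruction: since $\M$ is functorially finite, $\overline{\M}$ is a dualizing variety, so there is a duality $D\colon\overline{\M}\mbox{-}{\rm mod}\lrt\mmod\overline{\M}$; the composite $D\circ\Sigma\colon\mmod\underline{\M}\lrt\mmod\overline{\M}$ is a covariant equivalence, hence carries the projective $\underline{\M}(-,X)$ to a projective, i.e.\ representable, object $\overline{\M}(-,\tau_n X)$ of $\mmod\overline{\M}$. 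Yoneda on both sides then defines $\tau_n\colon\underline{\M}\lrt\overline{\M}$ as an equivalence essentially for free, and Proposition \ref{Prop-Sigma} gives $\Ext^n_{\La}(X,-)|_{\M}\cong D(\overline{\M}(-,\tau_n X))$, which evaluated at $Y$ is the stated formula; no identification $\tau_n=\tau\Omega^{n-1}_{\La}$ and no membership claim ever needs to be checked.

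Your fallback plan (set $\tau_n X:=\tau\Omega^{n-1}_{\La}X$, use the classical AR formula plus dimension shifting) is the classical Iyama route, and it is exactly here that the proposal has a genuine gap: the two statements you defer, namely $\tau\Omega^{n-1}_{\La}X\in\M$ and that $\tau_n$ is an equivalence $\underline{\M}\lrt\overline{\M}$ rather than merely a functor satisfying the Hom formula, are the entire substance of the theorem, and the sketch given does not establish them. The proposed vanishing argument via ``AR-formula-type identities'' $\Ext^i_{\La}(M,\tau\Omega^{n-1}X)\cong D\underline{\Hom}_{\La}(\Omega^{n-1}X,\Omega^{i}M)$ does not go through as stated: after shifting $\Ext^i$ down to $\Ext^1$, the syzygies fall on $M$, and $\Omega^{i}M$ leaves $\M$, so the $n$-cluster tilting condition gives no direct vanishing; making this precise is the nontrivial content of Iyama's original proof, which the paper's functorial argument is designed to bypass. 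Likewise, ``full faithfulness and density follow formally from the Hom formula and its dual'' only produces an adjunction between $\tau_n$ and $\tau_n^{-}$; that the unit and counit are isomorphisms needs an argument (for instance $\Omega^{-(n-1)}\Omega^{n-1}\simeq{\rm id}$ on $\underline{\M}$ is not automatic when $\La$ is not self-injective). There is also a small slip in the formula itself: the AR formula gives $\Ext^1_{\La}(\Omega^{n-1}X,Y)\cong D\overline{\Hom}_{\La}(Y,\tau\Omega^{n-1}X)$, i.e.\ the injectively stable Hom, not $D\underline{\Hom}$; it is the former that matches $\overline{\M}$ in the statement.
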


\begin{proof}
Since $\M$ is a functorially finite subcategory of $\mmod\La$, $\overline{\M}$ is a $k$-dualizing variety. Hence there exists a duality $D:\overline{\M}\mbox{-}{\rm mod} \lrt \mmod\overline{\M}.$ Consider the composition
\[D \circ \Sigma: \mmod\underline{\M} \st{\Sigma}\lrt \overline{\M}\mbox{-}\rm{mod} \st{D}\lrt \mmod\overline{\M}.\]
It is obvious that the restriction of this composition  to projective objects
\[D \circ \Sigma|: \prj\mmod\underline{\M} \lrt \prj\overline{\M}\mbox{-}\rm{mod}\]
is also an equivalence. On the other hand, by Yoneda's lemma, we have the equivalences $\SY: \underline{\M} \simeq  \prj\mmod\underline{\M}$ and
$\SY': \overline{\M} \simeq \prj\overline{\M}\mbox{-}\rm{mod}$. So, altogether we get the equivalence $\tau_n: \underline{\M} \rt \overline{\M}$ as $\tau_n:=\SY'^{-1}\circ D \circ \Sigma| \circ \SY$. Following commutative diagram explains what have been done
\[\begin{tikzcd}
\mmod\underline{\M} \ar[r, "{\Sigma}"] & \overline{\M}\mbox{-}\rm{mod} \ar[r, "{D}"] & \mmod \overline{\M} \\
\prj\mmod\underline{\M} \ar[u, hook, "\ell"] \ar[rr, "{D \circ \Sigma|}"] && \prj\overline{\M}\mbox{-}\rm{mod} \ar[u, hook, "{\ell'}"]\\
\underline{\M} \ar[u, "\SY"] \ar[rr, "{\tau_n}"] && \overline{\M} \ar[u, "{\SY'}"]
\end{tikzcd}\]
Now the commutativity of the diagram, implies that
\[\Sigma(\underline{\M}( - , X)) = D(\overline{\M}( - , \tau_n(X))),\]
for each $X \in \M$. This, in view of Proposition \ref{Prop-Sigma}, implies that
\[\Ext^n_{\La}(X, - )|_{\M} \simeq D(\overline{\M}( - , \tau_n(X))),\]
which is the desired isomorphism. The proof is hence complete.
\end{proof}

We end this section with another direct application of the above results. In fact, the equivalences given in  Propositions \ref{Proposition 2.10} and \ref{Prop 2.17} and Corollary \ref{Corollary-Auslander} establish the following equivalence between the stable categories of the functors of projective dimension at most one.

\begin{proposition}\label{stableequiv}
Let $\CM$ be an $n$-cluster tilting subcategory of $\mmod \La,$ where $n\geq 2$. Then, there exists the equivalence
\[\underline{\widetilde{\mathcal{P}}^{\leqslant 1}(\CM)}\simeq \underline{\widetilde{\mathcal{P}}^{\leqslant 1}(\CM^{\rm{op}})}.\]
In particular, by using the duality $D: {\CM}\mbox{-}{\rm mod} \lrt \mmod{\CM},$ we get the following equivalence
\[\underline{\widetilde{\mathcal{P}}^{\leqslant 1}(\CM)}\simeq \overline{\widetilde{\mathcal{I}}^{\leqslant 1}(\CM)},\]
where $\widetilde{\mathcal{I}}^{\leqslant 1}(\CM)$ denotes the subcategory of $\mmod \CM$ consisting of all functors of injective dimension at most $1$.
\end{proposition}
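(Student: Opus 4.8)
The plan is to chase the equivalences already established in the paper. By Proposition~\ref{Proposition 2.10}, the functor $\Up: \CS(\M) \lrt \widetilde{\SP}^{\leqslant 1}(\M)$ is full, dense and objective, sends the subcategory $\CV$ to projective objects of $\mmod\M$, and so induces an equivalence $\CS(\M)/\CV \simeq \underline{\widetilde{\SP}^{\leqslant 1}(\M)}$; combining with Corollary~\ref{equiv-Psi} this gives $\mmod\underline{\M} \simeq \underline{\widetilde{\SP}^{\leqslant 1}(\M)}$. Dually, Proposition~\ref{Prop 2.17} gives $\overline{\M}\mbox{-}{\rm mod} \simeq (\underline{\widetilde{\SP}^{\leqslant 1}(\M^{\op})})^{\op}$, equivalently $(\overline{\M}\mbox{-}{\rm mod})^{\op} \simeq \underline{\widetilde{\SP}^{\leqslant 1}(\M^{\op})}$. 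Finally, Corollary~\ref{Corollary-Auslander} provides the duality $\mmod\underline{\M} \simeq (\overline{\M}\mbox{-}{\rm mod})^{\op}$. Stringing these together yields
\[
\underline{\widetilde{\SP}^{\leqslant 1}(\M)} \simeq \mmod\underline{\M} \simeq (\overline{\M}\mbox{-}{\rm mod})^{\op} \simeq \underline{\widetilde{\SP}^{\leqslant 1}(\M^{\op})},
\]
which is the first claimed equivalence. Care is needed only to check that all three equivalences are compatible as stated (the paper has set up $\mmod\underline{\M}$ as the common middle term), but this is essentially bookkeeping.

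\textbf{The second equivalence.} For the ``in particular'' part, I would apply the standard duality $D: \CM\mbox{-}{\rm mod} \lrt \mmod\CM$ (which exists because $\CM$ is a functorially finite, hence dualizing, subcategory of $\mmod\La$). The point is that $D$ is an exact duality and so carries the subcategory $\widetilde{\SP}^{\leqslant 1}(\CM^{\op})$ of $\CM\mbox{-}{\rm mod}$, consisting of left $\CM$-modules of projective dimension at most one, onto the subcategory $\widetilde{\mathcal{I}}^{\leqslant 1}(\CM)$ of $\mmod\CM$ consisting of right $\CM$-modules of injective dimension at most one: a projective resolution $0 \to P_1 \to P_0 \to G \to 0$ of length one in $\CM\mbox{-}{\rm mod}$ is sent to an injective coresolution $0 \to D(G) \to D(P_0) \to D(P_1) \to 0$ of length one in $\mmod\CM$. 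Moreover $D$ exchanges projective and injective objects, hence descends to the stable/costable categories, giving an equivalence
\[
\underline{\widetilde{\SP}^{\leqslant 1}(\CM^{\op})} \simeq \overline{\widetilde{\mathcal{I}}^{\leqslant 1}(\CM)}.
\]
Composing with the first equivalence gives $\underline{\widetilde{\SP}^{\leqslant 1}(\CM)} \simeq \overline{\widetilde{\mathcal{I}}^{\leqslant 1}(\CM)}$, as desired.

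\textbf{Main obstacle.} There is no deep obstacle here — the statement is a formal consequence of results already proved. The one point requiring genuine care is the identification $D(\widetilde{\SP}^{\leqslant 1}(\CM^{\op})) = \widetilde{\mathcal{I}}^{\leqslant 1}(\CM)$, i.e. checking that finitely presented left $\CM$-modules of projective dimension at most one correspond under $D$ exactly to finitely presented right $\CM$-modules of injective dimension at most one, and that this bijection respects the relevant stable structures (projectives on one side go to injectives on the other, so that passing to $\underline{(\ )}$ on one side matches passing to $\overline{(\ )}$ on the other). Since $\CM$ is a dualizing $k$-variety, $D$ is a genuine duality between $\mmod\CM$ and $\CM\mbox{-}{\rm mod}$ and this verification is routine dimension-shifting together with the fact that $D$ sends representable functors to representable functors.
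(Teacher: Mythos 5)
Your proposal is correct and follows essentially the same route as the paper: the first equivalence is obtained by chaining Proposition \ref{Proposition 2.10}, Corollary \ref{Corollary-Auslander} and Proposition \ref{Prop 2.17} through the common middle term $\mmod\underline{\M}$, and the second by applying the duality $D$, which sends length-one projective resolutions in $\CM\mbox{-}{\rm mod}$ to length-one injective coresolutions in $\mmod\CM$. The extra care you take in checking that $D$ matches $\widetilde{\SP}^{\leqslant 1}(\CM^{\op})$ with $\widetilde{\mathcal{I}}^{\leqslant 1}(\CM)$ and respects the stable structures is exactly the routine verification the paper leaves implicit.
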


Recall that an artin algebra $\La$ is called $n$-Auslander \cite{I2} provided $\gldim\La \leq n+1 \leq \domdim\La,$ where $\gldim\La$ denotes the global dimension of $\La$ and $\domdim\La$ denotes the dominant dimension of $\La$ introduced by Tachikawa \cite{Ta}.

\begin{corollary}
Let $n\geqslant 2$ and $\La$ be an $n$-Auslander algebra. Then, there exist the following equivalences
\[\underline{\widetilde{\mathcal{P}}^{\leqslant 1}(\La)}\simeq \underline{\widetilde{\mathcal{P}}^{\leqslant 1}(\La^{\rm{op}})} \ \text{and } \ \underline{\widetilde{\mathcal{P}}^{\leqslant 1}(\La)}\simeq (\overline{\widetilde{\mathcal{I}}^{\leqslant 1}(\La)})^{\rm{op}}.\]
\end{corollary}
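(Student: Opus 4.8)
The plan is to deduce both equivalences from Proposition~\ref{stableequiv} after presenting $\La$ as an endomorphism algebra attached to an $n$-cluster tilting subcategory, via Iyama's higher Auslander correspondence. First I would recall from \cite{I2} that an artin algebra $\La$ is $n$-Auslander, i.e. $\gldim\La\leq n+1\leq\domdim\La$, precisely when there exist an artin algebra $\Gamma$ and a basic $n$-cluster tilting module $M\in\mmod\Gamma$ with $\La$ the endomorphism algebra of $M$. Set $\CM:=\add M$; as the additive closure of a module it is a functorially finite $n$-cluster tilting subcategory of $\mmod\Gamma$, and, since $n\geq 2$, Proposition~\ref{stableequiv} applies to it, yielding
\[
\underline{\widetilde{\mathcal{P}}^{\leqslant 1}(\CM)}\simeq\underline{\widetilde{\mathcal{P}}^{\leqslant 1}(\CM^{\op})}
\qquad\text{and}\qquad
\underline{\widetilde{\mathcal{P}}^{\leqslant 1}(\CM)}\simeq\overline{\widetilde{\mathcal{I}}^{\leqslant 1}(\CM)},
\]
the second being the comparison induced by the duality $D:\CM\mbox{-}{\rm mod}\to\mmod\CM$ appearing in that proposition.

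Next I would transfer everything from $\CM$ to $\La$ by Auslander's evaluation equivalence. Evaluation at $M$ gives an equivalence of abelian categories $\mmod\CM\simeq\mmod\La$ (and, correspondingly, $\CM\mbox{-}{\rm mod}\simeq\mmod\La^{\op}$) taking the representable functors $\CM(-,X)$, $X\in\CM$, exactly onto the projective $\La$-modules, every indecomposable projective arising this way; being an equivalence of abelian categories it also matches injective objects, hence injective coresolutions, hence injective dimension. Therefore it restricts to an exact equivalence carrying $\widetilde{\mathcal{P}}^{\leqslant 1}(\CM)$ onto $\widetilde{\mathcal{P}}^{\leqslant 1}(\La)$ (the $\La$-modules of projective dimension at most one), compatibly with projectives and injectives, and likewise it carries $\widetilde{\mathcal{P}}^{\leqslant 1}(\CM^{\op})$ onto $\widetilde{\mathcal{P}}^{\leqslant 1}(\La^{\op})$ and $\widetilde{\mathcal{I}}^{\leqslant 1}(\CM)$ onto $\widetilde{\mathcal{I}}^{\leqslant 1}(\La)$ (the $\La$-modules of injective dimension at most one). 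Passing to stable categories gives $\underline{\widetilde{\mathcal{P}}^{\leqslant 1}(\CM)}\simeq\underline{\widetilde{\mathcal{P}}^{\leqslant 1}(\La)}$, $\underline{\widetilde{\mathcal{P}}^{\leqslant 1}(\CM^{\op})}\simeq\underline{\widetilde{\mathcal{P}}^{\leqslant 1}(\La^{\op})}$, and $\overline{\widetilde{\mathcal{I}}^{\leqslant 1}(\CM)}\simeq\overline{\widetilde{\mathcal{I}}^{\leqslant 1}(\La)}$. If one's conventions place the evaluation equivalence onto $\mmod\La^{\op}$ rather than $\mmod\La$, one simply reindexes, using that $\La^{\op}$ is $n$-Auslander whenever $\La$ is, the quantities $\gldim$ and $\domdim$ being left--right symmetric for artin algebras.

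Finally I would substitute these identifications into the two equivalences of the first step: the first becomes $\underline{\widetilde{\mathcal{P}}^{\leqslant 1}(\La)}\simeq\underline{\widetilde{\mathcal{P}}^{\leqslant 1}(\La^{\op})}$, while the second, being realised through the contravariant functor $D$, becomes the duality $\underline{\widetilde{\mathcal{P}}^{\leqslant 1}(\La)}\simeq(\overline{\widetilde{\mathcal{I}}^{\leqslant 1}(\La)})^{\op}$; these are exactly the two asserted equivalences. I expect the only delicate point to be pure bookkeeping --- tracking onto which side of $\La$ the evaluation equivalence lands and remembering that $D$ is a duality, hence accounts for the $\op$ in the second statement --- since all the genuine homological content is already packed into Proposition~\ref{stableequiv} and the higher Auslander correspondence.
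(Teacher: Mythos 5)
Your proposal is correct and is essentially the paper's intended derivation: the corollary is meant as an immediate consequence of Proposition \ref{stableequiv}, obtained by writing the $n$-Auslander algebra $\La$ as $\End_\Gamma(M)$ for an $n$-cluster tilting module $M$ via Iyama's higher Auslander correspondence and transporting $\widetilde{\SP}^{\leqslant 1}(\CM)$, $\widetilde{\SP}^{\leqslant 1}(\CM^{\op})$ and $\widetilde{\CI}^{\leqslant 1}(\CM)$ through the evaluation equivalence $\mmod\CM\simeq\mmod\La$, exactly as you do. Your bookkeeping remarks (left--right symmetry of $\gldim$ and $\domdim$, and the $\op$ coming from the duality $D$) are the only delicate points, and you handle them correctly.
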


 \vspace{0.5cm}


\end{document}